\newtheorem{theorem}{Theorem}[section]
\newtheorem{lemma}[theorem]{Lemma}
\newtheorem{proposition}[theorem]{Proposition}
\theoremstyle{definition}
\newtheorem{remark}{Remark}
\subjclass[2010]{34D45; 37C10; 37C75; 37G15; 92B05}
 \keywords{generalized interference; stability analysis; finite time extinction; Hopf-bifurcation; prey refuge}
\begin{document}
\title[Mutual Interference with Generalized Holling type response]{DYNAMICS OF A PREDATOR-PREY MODEL WITH GENERALIZED HOLLING TYPE FUNCTIONAL RESPONSE AND MUTUAL INTERFERENCE}
\author[Antwi-Fordjour, Parshad, Beauregard]{}
\maketitle

% Enter the first author's name and address:
\centerline{\scshape Kwadwo Antwi-Fordjour $^1$, Rana D. Parshad$^2$, Matthew A. Beauregard$^3$}

\medskip
{\footnotesize
  \centerline{ 1) Department of Mathematics and Computer Science,}
 \centerline{ Samford University,}
 \centerline{ Birmingham, AL 35229, USA}
}
\medskip

{\footnotesize
 \centerline{ 2) Department of Mathematics,}
 \centerline{ Iowa State University,}
 \centerline{ Ames, IA 50011, USA}
   }
\medskip

   {\footnotesize
 \centerline {3) Department of Mathematics and Statistics,}
 \centerline{ Stephen F. Austin State University,}
 \centerline{ Nacogdoches, TX 75962, USA}
   }

\begin{abstract}
Mutual interference and prey refuge are important drivers of predator-prey dynamics. The ``exponent" or degree of mutual interference has been under much debate in theoretical ecology.
In the present work, we investigate the interplay of the mutual interference exponent, on the behavior of a predator-prey model with a generalized Holling type functional response. We investigate stability properties of the system and derive conditions for the occurrence of saddle-node and Hopf-bifurcations. A sufficient condition for extinction of the prey species has also been derived for the model. In addition, we investigate the effect of a prey refuge on the population dynamics of the model and derive conditions for the prey refuge that would yield persistence of populations. We provide additional verification our analytical results via numerical simulations.
Our findings are in accordance with classical experimental results in ecology \cite{G34}, that show that extinction of predator and prey populations is possible in a finite time period - but that bringing in refuge can effectively  
cause persistence. 

\end{abstract}

\section{Introduction}

Predator-prey dynamics form the corner stone of ecosystems. Mathematical models for such interactions goes back to the work of Lokta, Volterra, Holling and Gause \cite{B75, G34, H59, L25}. Holling's classical work proposes that a predators feeding rate depends solely on the prey density, and is modeled essentially by a saturating function called the functional response, described via $p(x) = \frac{f(x)}{1+hf(x)}$, where $h$ is the handling time of one prey item and $f(x)$ is a function of prey density $x$. Typically $f$ is smooth, making the response $p$ smooth, and depending on the form of $f$ we have Holling type II, III, IV responses \cite{K07, H59}.
The response, $p(x) = \frac{f(x)}{1+hf(x)}$, can be considered a special form of 2 different response types.
$p(x) = \frac{(f(x))^{p}}{1+h(f(x))^{p}}$, or $p(x) = \left(\frac{f(x)}{1+hf(x)}\right)^{m_{1}}$. When $p=1$, or $m_{1}=1$, we recover the classical response posed earlier.

An interesting subclass of these general responses are the cases when $0<p<1$ or $0< m_{1} < 1$. In these cases $p(x)$ is \emph{non-smooth}, causing various difficulties in the mathematical analysis of these systems. For example, linearization about the trivial steady state is no longer possible \cite{S97}. Such responses were considered by Sugie \cite{S97, S99} and more recently by Braza \cite{B12}. However these works miss a key dynamic inherent in such models - that of finite time extinction. Note, in these cases one might ask what real ecological scenarios do these models represent. The work by Sugie, proposed the $(f(x))^{p}$ term, as indicative of a predator which is highly efficient and with a high attack rate.

Mutual interference is defined as the behavioral interactions among feeding organisms, that reduce the time that each individual spends obtaining food, or the amount of food each individual consumes \cite{Hassell71, Hassell75, D75, F79}. Some of the earliest work on mutual/predator interference, was initiated by Erbe \cite{E85}, in which the mutual interference is modeled as $\frac{f(x)}{1+hf(x)}y^{m}$, where $y$ is the predator density and $0 < m < 1$. The exact value of the exponent $m$ has been under much debate in ecology \cite{DV11}. Various authors describe the response $p(x) = \left(\frac{f(x)}{1+hf(x)}\right)^{m_{1}}$ in terms of mutual interference. This direction was first considered by Upadhyay and Rao \cite{U09}. However, an ecological motivation, to the best of our knowledge is not provided. We are motivated by certain theoretical ecology directions \cite{L12, M04, R05}, and interpret
$p(x) = \left(\frac{f(x)}{1+hf(x)}\right)^{m_{1}}$, $0 < m_{1} < 1$, as a predator with a greater feeding rate, or a more aggressive predator, than one which is modeled in the classical scenario - that is when $m_{1} = 1$. This is clear from simple comparison, $p(x)|_{m_{1}=1} < p(x)|_{0 < m_{1} < 1}$, $\forall x > 0$.

Prey refuge, and its role in predator-prey communities has also been extremely well investigated, since the seminal work of Kar \cite{K05}. Refuge is defined as any strategy taken by prey to avoid predation, such as shelter, dispersal, mimicry and camouflage \cite{F06}. It can have strong influence on predator-prey communities \cite{P16, K11} - often stabilizing systems, which are otherwise doomed for extinction. It is thus an important ingredient in ecosystem balance and diversity \cite{W13}. However, the effect of refuge on non-smooth systems such as the affore mentioned ones, remains less investigated \cite{W13}. The well known experiments of Gause find in contradiction to the predictions of classical predator-prey models, that there is a distinct chance for the predator and prey populations to die out - unless the prey is provided with refuge \cite{G34}. Non-smooth systems such as when $0<m,m_{1} <1$, in the affore mentioned models, enable the dynamic of finite time predator-prey extinction (such as seen in the experiments of Gause \cite{G34, K11}) - however, to the best of our knowledge, the effect of prey refuge on these systems has not been investigated.

For the purposes of this manuscript we consider the functional response $p(x) = \left(\frac{f(x)}{1+hf(x)}\right)^{m_{1}}$, and define the parameter regimes $m_{1} > 1$ as \emph{super-critical}, that is the regime where $p(x) \in C^{k}, \forall k$, and $f(x)$ is a polynomial function. We define $m_{1} = 1$ as \emph{critical}, recovering the classical case from the literature. Lastly we define $0< m_{1} < 1$ as \emph{sub-critical}, that is the regime where $p(x)$ looses smoothness, and is not even Lipschitz. Thus the goals of the current manuscript are:
\begin{enumerate}
\item To consider a generalized model of interference, in the \emph{sub-critical} regime; therein to investigate the phenomenon of finite time extinction, that can occur in this regime.

\item To investigate this model dynamically, including the various bifurcations that might occur;

\item To investigate the effect of prey refuge on the dynamics of this generalized model. We find that there is a critical amount of refuge that prevents finite time extinction of the prey. This is seen via theorem \ref{thm:ref1}.
\end{enumerate}
The rest of the paper is organized as follows. The mathematical formulation of the problem and mathematical preliminaries such as nonnegativity, boundedness and dissipativeness are presented in Section \ref{section:model_formulation}. Existence of equilibria, stability analysis and various local bifurcation analysis are considered in Section \ref{section:Existence_of_equilibria_Main}. In Section \ref{section:Finite_time_extinction}, we analyze the possibility of finite time extinction of the prey population. We investigate the effect of prey refuge in Section \ref{section:Effect_of_prey_refuge}. Additionally, stability analysis and various local bifurcation analysis are carried out. Numerical simulations are performed in Section \ref{section:Numerical_simulations} to correlate with some of our key analytical findings. In the last section, we present our discussions and conclusions.

%%%%%%%%%%%%%%%%%%%%%%%%%%%%%%%%%%%%%%%%%%%%%%%%%%%%%%%%%%%
\section{Model Formulation}\label{section:model_formulation}
First, we consider a general predator-prey model with mutual interference among predators of the form
\begin{equation}\label{GeneralEquation}
\left\{ \begin{array}{ll}
\dfrac{dx_{1} }{dt} &~ = x_{1} f(x_1) - w_0 g(x_1) x_{2}^{m_{{\kern 1pt} 2} } ,\\[2ex]
\dfrac{dx_{2} }{dt} &~ =-a_{2} x_{2} + w_{1} g(x_1)  x_{2}^{m_{{\kern 1pt} 2} },
\end{array}\right.
\end{equation}
where $f(x_1)$ and $g(x_1)$ are the logistic growth and the  functional response of the predator towards the prey respectively. Assume that  $0 < m_2 \leq 1$, as per literature on mutual interference \cite{Hassell71, Hassell75}.  In this paper, we consider the general logistic growth and the generalized Holling type functional response, see \cite{Upadhyay15, Upadhyay19}:
\begin{align}\label{functions}
f(x)=a_1-b_1 x,\qquad g(x)=\left(\dfrac{x}{x+d}\right)^{m_1}.
\end{align}
Assume that  $0 < m_1 \leq 1$. The assumptions placed on the functions $f$ and $g$ in \eqref{functions} are:

 \begin{enumerate}[label=(\Roman*)]
 \item $g$ is continuous for $x_1\geq 0$ and $g(0)=0$;
 \item $g$ is smooth for $x_1>0$ and $g^{\prime}(x_1)>0$ for $x_1>0$;
 \item $f$ is smooth for $x_1\geq 0$;
 \item There exists $\dfrac{a_1}{b_1}>0 $ such that $\left(x_1 - \dfrac{a_1}{b_1}\right) f(x_1)<0$ for $x_1\geq 0$, $x_1 \neq \frac{a_1}{b_1}$;
 \item For $0<m_1<1$, $g^{\prime}(0^{+}):= \lim_{x_1\to 0^+} \dfrac{g(x_1)}{x_1}=+\infty$;
 \item $g$ is not smooth for $x_1=0$ when $0<m_1<1$;
 \item The integral $\lim_{\epsilon\to 0}\displaystyle\int_{\epsilon}^{\beta}\dfrac{dx_1}{g(x_1)}$ converges for fixed  $\beta>0$.
 \end{enumerate}

Thus the predator-prey model with mutual interference and the generalized Holling type functional response becomes
\begin{equation}\label{EquationMain}
\begin{cases}
\dfrac{dx_{1} }{dt} &=a_{1} x_{1} -b_{1} x_{1}^{2} - w _{0} \left(\frac{x_{1} }{x_{1} +d} \right)^{m_{{\kern 1pt} 1} } x_{2}^{m_{{\kern 1pt} 2} } ,\\[2ex]
\dfrac{dx_{2} }{dt} &=-a_{2} x_{2} + w_{1} \left(\frac{x_{1} }{x_{1} +d} \right)^{m_{{\kern 1pt} {\kern 1pt} 1} } x_{2}^{m_{{\kern 1pt} 2} },
\end{cases}
\end{equation}
The variables and parameters used in the model are defined in Table \ref{tab:table1}.

\begin{table}[h!]
  \begin{center}
    \caption{List of parameters used in the model \eqref{EquationMain}. All parameters considered are positive constants.}
    \label{tab:table1}
    \begin{tabular}{@{}l l@{}}
    %{*{2}{c}}
      \toprule
      Variables/\\Parameters & Description \\
      \midrule
$x_{1}$                         & Prey population\\
$x_{2}$                         & Predator population\\
$t$                                & Time\\
$a_1$                           & Per capita rate of self-reproduction for the prey \\
$a_2$                           & Intrinsic death rate of the predator population \\
$w _{0}$                       & Maximum rate of per capita removal of prey \\
$w _{1}$                       & Measure efficiency of biomass conversion from prey to predator \\
 $b_{1}$                        & Death rate of prey population due to intra-species competition \\
 $d$                              & Half saturation constant\\
 $1/m_1$            &Predators feeding intensity \\
 $m_2$                          & Mutual interference exponent \\
      \bottomrule
    \end{tabular}
  \end{center}
\end{table}
%%%====
~

\subsection{Mathematical Preliminaries}~\\

There are essential properties that a mathematical model must exhibit in order to obtain realistic solutions.  In particular, it is important to guarantee positivity of the populations. Likewise, boundedness of the total population is another important feature of a realistic model.  In this section, we present guarantee positivity, boundedness, and dissipativeness of the mathematical model \eqref{EquationMain}.

\subsubsection{Positivity and Boundedness}

The nonnegativity of populations generated by the mathematical model \eqref{EquationMain} is clearly important to make biological sense.  In addition, positivity implies survival of the populations over the temporal domain.  The boundedness of populations ensures that no population supercedes unrealistic values in time. In particular, boundedness guarantees that the total population does not grow beyond an exponential rate for an unbounded interval.  Guaranteeing both of these features makes strides to showing the feasibility of a mathematical model for describing population behavior.
\begin{lemma}\label{positivity}
Consider the following region $\mathbb{R}_+^2=\{(x_1,x_2):x_1\geq 0, x_2\geq 0\}$, then all solutions $(x_1(t),x_2(t))$ of model \eqref{EquationMain} with initial conditions $x_1(0)>0,~x_2(0)>0$ are nonnegative for all $t\geq 0$.
\end{lemma}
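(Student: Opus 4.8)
The plan is to establish nonnegativity of solutions by exploiting the fact that both coordinate axes are invariant under the flow of \eqref{EquationMain}, which by uniqueness of solutions prevents a trajectory starting in the open positive quadrant from crossing them. First I would examine the structure of the right-hand side. On the $x_2$-equation, notice that $\frac{dx_2}{dt} = x_2\left(-a_2 + w_1\left(\frac{x_1}{x_1+d}\right)^{m_1} x_2^{m_2-1}\right)$ only when $m_2 = 1$ factors cleanly; for $0 < m_2 < 1$ the term $x_2^{m_2}$ is not Lipschitz at $x_2 = 0$, so the cleanest route is to integrate directly. Along any solution, as long as $x_2 > 0$ we may write
\begin{equation}
x_2(t) = x_2(0)\exp\left(\int_0^t \left(-a_2 + w_1\left(\frac{x_1(s)}{x_1(s)+d}\right)^{m_1} x_2(s)^{m_2-1}\right)\,ds\right),
\end{equation}
and since the integrand is finite on any interval where $x_2$ stays positive and bounded, $x_2(t)$ cannot reach zero in finite time; hence $x_2(t) > 0$ for all $t \geq 0$ for which the solution exists.

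Next I would handle $x_1$. Here the troublesome predation term is $w_0\left(\frac{x_1}{x_1+d}\right)^{m_1} x_2^{m_2}$, which vanishes when $x_1 = 0$ because of assumption (I), namely $g(0) = 0$. Thus the set $\{x_1 = 0\}$ is itself invariant: if $x_1(t_0) = 0$ then $\frac{dx_1}{dt}(t_0) = 0$, and $x_1 \equiv 0$ solves the first equation. Then I would argue as before, writing on any interval where $x_1 > 0$,
\begin{equation}
x_1(t) = x_1(0)\exp\left(\int_0^t \left(a_1 - b_1 x_1(s) - w_0 \frac{x_1(s)^{m_1-1}}{(x_1(s)+d)^{m_1}} x_2(s)^{m_2}\right)\,ds\right).
\end{equation}
The subtlety is the factor $x_1^{m_1-1}$ when $0 < m_1 < 1$, which blows up as $x_1 \to 0^+$; so this representation alone does not immediately forbid $x_1$ from hitting zero. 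I expect this to be the main obstacle. To get around it, I would instead use a comparison/barrier argument: since $x_2$ is already known to be nonnegative (indeed positive) and, by the boundedness results that the paper develops, bounded on finite intervals, the predation term is bounded above, so $\frac{dx_1}{dt} \geq -C x_1$ is false in general, but one can bound $\left(\frac{x_1}{x_1+d}\right)^{m_1} \le 1$, giving $\frac{dx_1}{dt} \geq a_1 x_1 - b_1 x_1^2 - w_0 M^{m_2}$ where $M$ bounds $x_2$; this lower-bound ODE has a nonnegative equilibrium structure, but a cleaner statement is simply that $\{x_1 = 0\}$ being invariant, together with continuity of solutions, forces $x_1(t) \geq 0$ for a solution that starts at $x_1(0) > 0$ — any first time $t^\ast$ at which $x_1(t^\ast) = 0$ would make $(0, x_2(t^\ast))$ and the solution $t \mapsto (0, \tilde x_2(t))$ agree, and backward uniqueness off the axis (valid since the vector field is smooth on $x_1 > 0$) yields a contradiction.

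To make the uniqueness argument airtight despite the lack of Lipschitz regularity at the axes, I would note that we only ever invoke uniqueness in the open quadrant $x_1 > 0, x_2 > 0$, where $f$ and $g$ are smooth (assumptions (II), (III)), so classical Picard–Lindelöf applies there. The logical structure is then: suppose for contradiction that $x_1$ or $x_2$ becomes zero at some first time $t^\ast > 0$; on $[0, t^\ast)$ the solution lies in the open quadrant, the exponential representations above are valid, and letting $t \uparrow t^\ast$ the relevant exponent stays bounded (for $x_2$ always; for $x_1$ one uses that $\int_0^{t^\ast} x_1(s)^{m_1 - 1}\,ds$ converges, which is exactly where assumption (VII), the convergence of $\int_\epsilon^\beta dx_1/g(x_1)$, does real work — it controls the singular integral along the trajectory). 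Hence $x_1(t^\ast), x_2(t^\ast) > 0$, contradicting the choice of $t^\ast$. I would present the $x_2$ argument in full and the $x_1$ argument by the same template, flagging assumption (VII) as the ingredient that tames the non-Lipschitz singularity; the remaining details are routine.
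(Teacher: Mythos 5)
Your $x_2$ half is essentially right, and can be stated even more simply: while the orbit stays in the closed quadrant the interference term $w_1 g(x_1)x_2^{m_2}$ is nonnegative, so $\dot{x}_2 \geq -a_2 x_2$ and $x_2(t)\geq x_2(0)e^{-a_2 t}>0$; no discussion of the blow-up of $x_2^{m_2-1}$ is needed. (For the record, the paper does not argue the lemma at all — it simply cites Theorem 3.1 of \cite{Upadhyay15} — so the comparison is with what the lemma actually asserts.)

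The $x_1$ half, however, contains a genuine error: you prove too much. Your contradiction argument concludes $x_1(t^{*})>0$, i.e.\ that the prey can never reach zero in finite time, and this is false for the model — Theorem \ref{FiniteTimeTheorem} of this very paper constructs initial data for which $x_1$ goes extinct in finite time. Both tools you invoke fail exactly at the point where you need them. First, "backward uniqueness off the axis" is applied at the point $(0,x_2(t^{*}))$, which lies \emph{on} the axis, where $g(x_1)=(x_1/(x_1+d))^{m_1}$ with $0<m_1<1$ is not Lipschitz; Picard--Lindel\"of gives nothing there, and this non-uniqueness at $\{x_1=0\}$ is precisely the mechanism that lets an interior orbit hit the axis. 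Second, assumption (VII) is used backwards: convergence of $\lim_{\epsilon\to 0}\int_{\epsilon}^{\beta}dx_1/g(x_1)$ is the classical Sugie-type condition ensuring the \emph{time} to reach $x_1=0$ is finite — it enables finite-time extinction rather than preventing it — and it gives no control on $\int_0^{t^{*}}x_1(s)^{m_1-1}\,ds$ along a trajectory; in the extinction scenario that exponent integral diverges to $-\infty$, consistent with $x_1\to 0$. What the lemma actually needs is only nonnegativity, which is weaker than what you tried to show: since $g(0)=0$, the point $x_1=0$ forces $\dot{x}_1=0$ (and the right-hand side is not even real-valued for $x_1<0$ when $m_1$ is fractional), so no solution can cross into $x_1<0$; combined with the $x_2$ bound above, the closed positive quadrant is forward invariant. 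Recasting your $x_1$ argument as a "cannot cross zero" statement, rather than a "cannot reach zero" statement, repairs the proof and keeps it consistent with Theorem \ref{FiniteTimeTheorem}.
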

\begin{proof}
The proof of Lemma \eqref{positivity} follows from the proof of Theorem $3.1$ in \cite{Upadhyay15}.~\\
\end{proof}
\begin{lemma}\label{boundedness}
All solutions $(x_1(t),x_2(t))$ of model \eqref{EquationMain} with initial conditions $x_1(0)>0,~x_2(0)>0$ are bounded.
\end{lemma}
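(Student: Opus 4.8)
The plan is to establish boundedness by first controlling the prey population $x_1$, then using that bound to control $x_2$ via a comparison argument on a suitable linear combination of the two populations. First I would observe that from the first equation of \eqref{EquationMain}, since the predation term $w_0\left(\frac{x_1}{x_1+d}\right)^{m_1}x_2^{m_2}$ is nonnegative on $\mathbb{R}_+^2$ (by Lemma \ref{positivity}), we have the differential inequality
\begin{equation*}
\frac{dx_1}{dt} \leq a_1 x_1 - b_1 x_1^2.
\end{equation*}
A standard comparison with the logistic equation then yields $\limsup_{t\to\infty} x_1(t) \leq \frac{a_1}{b_1}$, and in fact $x_1(t)$ is bounded on $[0,\infty)$ by $M_1 := \max\{x_1(0), a_1/b_1\}$.

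Next I would define an auxiliary function, the natural candidate being $W(t) = w_1 x_1(t) + w_0 x_2(t)$ (weighting so that the $\pm w_0 w_1 g(x_1) x_2^{m_2}$ terms cancel). Differentiating along trajectories gives
\begin{equation*}
\frac{dW}{dt} = w_1\bigl(a_1 x_1 - b_1 x_1^2\bigr) - w_0 a_2 x_2.
\end{equation*}
Adding $\eta W$ for a suitable constant $\eta>0$ (say $\eta \leq a_2$) produces
\begin{equation*}
\frac{dW}{dt} + \eta W = w_1\bigl(a_1 x_1 + \eta x_1 - b_1 x_1^2\bigr) - w_0(a_2-\eta) x_2 \leq w_1\bigl((a_1+\eta) x_1 - b_1 x_1^2\bigr) \leq K,
\end{equation*}
where $K := \sup_{x_1\geq 0} w_1\bigl((a_1+\eta)x_1 - b_1 x_1^2\bigr) = \frac{w_1(a_1+\eta)^2}{4b_1}<\infty$. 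A Gronwall-type estimate then gives $W(t) \leq W(0)e^{-\eta t} + \frac{K}{\eta}(1-e^{-\eta t})$, hence $W$ is bounded, and since both $x_1$ and $x_2$ are nonnegative, each is individually bounded.

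I do not anticipate a serious obstacle here; the argument is the classical dissipativity estimate. The one point requiring a little care is the cancellation in $\frac{dW}{dt}$: because the interference exponent $m_2$ appears symmetrically in both the loss term for the prey and the gain term for the predator, the weighted sum $w_1 x_1 + w_0 x_2$ does eliminate the functional-response term exactly, regardless of the value of $m_2\in(0,1]$ and of the non-smoothness of $g$ at $x_1=0$. Thus the non-smoothness that complicates the linearization analysis elsewhere in the paper plays no role in boundedness. (Alternatively, one could simply cite the analogous boundedness result, e.g. Theorem~3.1 or its corollary in \cite{Upadhyay15}, as was done for Lemma~\ref{positivity}.)
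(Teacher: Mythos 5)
Your proof is correct, and it takes a genuinely (if mildly) different route from the paper's. The paper works with the unweighted sum $Q = x_1 + x_2$; since the predation and conversion terms then do not cancel, the paper must additionally suppose $w_0 \geq w_1$ so that the leftover term $-(w_0 - w_1)\left(\frac{x_1}{x_1+d}\right)^{m_1} x_2^{m_2}$ is nonpositive, and only then applies the same $\frac{dQ}{dt} + \delta Q \leq \frac{(a_1+\delta)^2}{4b_1}$ estimate with $\delta \leq a_2$ and a differential inequality. Your weighted combination $W = w_1 x_1 + w_0 x_2$ makes the interaction terms cancel exactly for every $m_2 \in (0,1]$, so you obtain the same Gronwall-type bound without the extra hypothesis $w_0 \geq w_1$ — a hypothesis the paper's proof uses but the lemma statement does not contain, so your argument actually proves the lemma as stated in slightly greater generality. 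It also mirrors the combination $x_1 + \frac{w_0}{w_1} x_2$ that the paper itself uses later in the dissipativeness lemma, so it is fully consistent with the rest of the paper; your preliminary logistic bound on $x_1$ is not needed for the conclusion but is harmless.
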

\begin{proof}
%The proof of Lemma \eqref{boundedness} follows from the proof of Theorem $3.2$ in \cite{Upadhyay15}.~\\

%%% Will have to look at this again
Let us define the function
$Q\left(x_1(t),x_2(t)\right)=x_1(t)+x_2(t).$
Then
\begin{align*}
\dfrac{dQ}{dt}& = \dfrac{dx_1}{dt}+\dfrac{dx_2}{dt}\\
& = a_{1} x_{1} -b_{1} x_{1}^{2} - w _{0} \left(\frac{x_{1} }{x_{1} +d} \right)^{m_{{\kern 1pt} 1}} x_{2}^{m_{{\kern 1pt} 2}}-a_{2} x_{2} + w_{1} \left(\frac{x_{1} }{x_{1} +d} \right)^{m_{{\kern 1pt} 1}} x_{2}^{m_{{\kern 1pt} 2}}.
\end{align*}
Let $\delta$ be a positive constant such that $\delta\leq a_2$ and suppose $w_0\geq w_1$, then we obtain,
\begin{align*}
\dfrac{dQ}{dt}+\delta Q & =  a_{1} x_{1} -b_{1} x_{1}^{2} - w _{0} \left(\frac{x_{1} }{x_{1} +d} \right)^{m_{{\kern 1pt} 1} } x_{2}^{m_{{\kern 1pt} 2} }-a_{2} x_{2} + w_{1} \left(\frac{x_{1} }{x_{1} +d} \right)^{m_{{\kern 1pt} {\kern 1pt} 1} } x_{2}^{m_{{\kern 1pt} {\kern 1pt} 2} }\\&
 +\delta (x_1+x_2)\\
& = a_1x_1-b_{1} x_{1}^{2}- (w _{0}-w_{1}) \left(\frac{x_{1} }{x_{1} +d} \right)^{m_{{\kern 1pt} 1}} x_{2}^{m_{{\kern 1pt} 2}}-(a_2-\delta)x_2+\delta x_1\\
& \leq (a_1+\delta)x_1-b_{1} x_{1}^{2}~\leq~ \dfrac{(a_1+\delta)^2}{4b_1}.
\end{align*}
Taking $W_1=\dfrac{(a_1+\delta)^2}{4b_1}$ and applying the theory on differential inequality, we obtain
\[0\leq Q\left(x_1(t),x_2(t)\right)\leq \dfrac{W_1(1-e^{-\delta t})}{\delta}+ Q\left(x_1(0),x_2(0)\right)e^{-\delta t}, \]
which implies
\begin{equation}\label{Limsupbound}
\limsup_{t\rightarrow \infty} Q\left(x_1(t),x_2(t)\right)\leq \dfrac{W_1}{\delta}.
\end{equation}
By \eqref{Limsupbound} and Lemma \eqref{positivity}, all solutions of \eqref{EquationMain} with initial conditions $x_1(0)>0,~x_2(0)>0$ will be contained in the region
\[\Theta=\{(x_1,x_2)\in\mathbb{R}_+^2:Q\left(x_1(t),x_2(t)\right)\leq \dfrac{W_1}{\delta}+\epsilon, \;\text{for any}\;\epsilon>0\}. \]
The proof is complete.
\end{proof}~\\

\subsubsection{\bf{Dissipativeness}}~\\

In the previous section, it was shown that the total population remains positive and bounded for all time.  Here, we showed that the individual populations are all bounded from above. In such a situation, we say that the model is dissipative.
\begin{lemma}\label{dissipative}
The system \eqref{EquationMain} is dissipative.
\end{lemma}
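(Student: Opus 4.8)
The plan is to obtain dissipativeness as an immediate consequence of the two preceding lemmas. Recall that the system is called dissipative when there exists a bounded set into which every trajectory enters in finite time and in which it thereafter remains; so it suffices to exhibit such an absorbing set and check that it is uniform over initial data.

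First I would invoke Lemma \ref{positivity} to guarantee $x_1(t)\ge 0$ and $x_2(t)\ge 0$ for all $t\ge 0$, so that each coordinate is dominated by the sum $Q(x_1(t),x_2(t))=x_1(t)+x_2(t)$. Combining this with the estimate \eqref{Limsupbound} from Lemma \ref{boundedness}, namely $\limsup_{t\to\infty}Q(x_1(t),x_2(t))\le W_1/\delta$ with $W_1=(a_1+\delta)^2/(4b_1)$ and $\delta\le a_2$, immediately yields $\limsup_{t\to\infty}x_1(t)\le W_1/\delta$ and $\limsup_{t\to\infty}x_2(t)\le W_1/\delta$. Hence every solution eventually enters and stays inside the bounded region $\Theta=\{(x_1,x_2)\in\mathbb{R}_+^2: x_1+x_2\le W_1/\delta+\epsilon\}$ for any $\epsilon>0$, which is precisely the statement that \eqref{EquationMain} is dissipative.

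For a sharper, more self-contained estimate I would alternatively bound the prey first: from the first equation of \eqref{EquationMain} and nonnegativity of the predation term, $\dot x_1\le a_1x_1-b_1x_1^2$, so a standard differential-inequality/comparison argument gives $\limsup_{t\to\infty}x_1(t)\le a_1/b_1$. Feeding this back into the predator equation, on the set $\{x_1\le a_1/b_1+\epsilon\}$ one has $g(x_1)\le M:=\bigl((a_1/b_1+\epsilon)/(a_1/b_1+\epsilon+d)\bigr)^{m_1}$, hence $\dot x_2\le -a_2x_2+w_1Mx_2^{m_2}$; since $0<m_2\le 1$, the right-hand side becomes negative once $x_2$ is large, which produces an explicit ultimate bound for $x_2$ as well, and therefore a concrete absorbing rectangle in $\mathbb{R}_+^2$.

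The only point requiring a little care is the predator coordinate: because the $x_2^{m_2}$ term is at most linear, the term $-a_2x_2$ dominates for large $x_2$ when $m_2<1$, but the marginal case $m_2=1$ must be handled through the same structural hypothesis ($w_0\ge w_1$, together with $\delta\le a_2$) already used in Lemma \ref{boundedness}, so that the cross term in $\dot Q+\delta Q$ has the right sign. Apart from this bookkeeping and the choice of $\delta$, the argument is routine, and I expect no substantive obstacle.
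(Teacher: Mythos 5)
Your first argument is correct as far as the paper's stated results go, but it is a genuinely different route from the paper's own proof. You obtain dissipativeness immediately from Lemma \ref{positivity} plus the uniform ultimate bound \eqref{Limsupbound} of Lemma \ref{boundedness}, which indeed yields $\limsup_{t\to\infty}x_i(t)\le W_1/\delta$ for each coordinate. The paper instead gives a self-contained argument: it first bounds the prey by logistic comparison, $\limsup_{t\to\infty}x_1\le a_1/b_1$, and then considers the weighted combination $x_1+\tfrac{w_0}{w_1}x_2$, in which the two predation terms cancel \emph{exactly}, giving $\tfrac{d}{dt}\bigl(x_1+\tfrac{w_0}{w_1}x_2\bigr)\le K_1-a_2\bigl(x_1+\tfrac{w_0}{w_1}x_2\bigr)$ and hence an explicit ultimate bound on $x_2$. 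The difference matters in two ways. First, your shortcut silently inherits the hypothesis $w_0\ge w_1$ (and the choice $\delta\le a_2$) that the paper smuggles into the \emph{proof} of Lemma \ref{boundedness}, whereas the paper's dissipativeness argument needs no relation between $w_0$ and $w_1$ and no restriction on $m_2$, because the cancellation in the weighted sum is exact. Second, your alternative ``self-contained'' route via $\dot x_2\le -a_2x_2+w_1Mx_2^{m_2}$ genuinely breaks down at $m_2=1$ (you would need $w_1M<a_2$, which is not assumed), and your proposed fix is essentially to fall back on the $Q$-function argument again; the paper's weighted-sum trick is precisely the device that disposes of the marginal case $m_2=1$ without any such side condition. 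So: your proof is acceptable given the lemmas as stated, but the paper's version is stronger in that it is independent of the hidden $w_0\ge w_1$ assumption and gives the sharper individual bound $a_1/b_1$ on the prey; if you keep your version, you should at least flag that you are importing whatever hypotheses underlie Lemma \ref{boundedness}.
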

\begin{proof}
From the first system of \eqref{EquationMain}
\begin{align*}
\dfrac{dx_{1} }{dt} &=a_{1} x_{1} -b_{1} x_{1}^{2} - w _{0} \left(\frac{x_{1} }{x_{1} +d} \right)^{m_{{\kern 1pt} 1}} x_{2}^{m_{{\kern 1pt} 2}} ,\\
 &\leq a_{1} x_{1} -b_{1} x_{1}^{2}.
\end{align*}
This implies that
\begin{align}\label{limsup x1}
\limsup_{t\rightarrow \infty} x_1(t)\leq\dfrac{a_1}{b_1}.
\end{align}
The inequality \eqref{limsup x1} gives that for arbitrary small $\epsilon_1>0,$ there exist a real number $T>0$ such that
\begin{align}\label{limsup x1 1}
x_1(t)\leq\dfrac{a_1}{b_1}+\epsilon_1,\;\text{for all}\; t\geq T_1.
\end{align}
Using \eqref{limsup x1 1}, we obtain  for all $t\geq T_1$,
\begin{align*}
\dfrac{d}{dt}\left(x_1+\dfrac{w_0 x_2}{w_1} \right)&=\dfrac{dx_1}{dt}+\dfrac{w_0}{w_1}\dfrac{dx_2}{dt}\\
&=a_{1} x_{1}  -b_{1} x_{1}^{2} -\dfrac{w_0 a_{2}}{w_1} x_{2} \\
&\leq a_1x_1 -\dfrac{w_0 a_{2}}{w_1} x_{2}\\
&=(a_1+a_2)x_1-a_2\left(x_1+\dfrac{w_0 x_2}{w_1}  \right)\\
&\leq K_1-a_2\left(x_1+\dfrac{w_0 x_2}{w_1}  \right),
\end{align*}
where $K_1=(a_1+a_2)\left(\dfrac{a_1}{b_1}+\epsilon_1\right).$ Therefore, we obtain
\begin{align}\label{limsup x1 2}
\limsup_{t\rightarrow\infty}\left(x_1+\dfrac{w_0 x_2}{w_1} \right)\leq\dfrac{K_2}{a_2}.
\end{align}
By \eqref{limsup x1} and \eqref{limsup x1 2}, there exists a real number $K_2$ such that
$$\limsup_{t\rightarrow\infty}x_2\leq K_2. $$
Thus, for arbitrary small $\epsilon_2>0$, there exists $T_2>T_1>0$, such that for all $t\geq T_2$
$$x_2\leq K_2+\epsilon_2.$$
Therefore the model \eqref{EquationMain} is dissipative.
\end{proof}~\\
%

%%%%%%

\section{Existence of Equilibria}\label{section:Existence_of_equilibria_Main}

In this section, we determine and analyze equilibria for our mathematical model.  Consider the solutions to the steady state equations:
\begin{align}
a_{1} x_{1} -b_{1} x_{1}^{2} - w _{0} \left(\frac{x_{1} }{x_{1} +d} \right)^{m_{{\kern 1pt} 1}} x_{2}^{m_{{\kern 1pt} 2}} &= 0  \label{equilibrium1} \\
-a_{2} x_{2} + w_{1} \left(\frac{x_{1} }{x_{1} +d} \right)^{m_{{\kern 1pt} 1}} x_{2}^{m_{{\kern 1pt} 2}} &= 0   \label{equilibrium2}
\end{align}
The above equations, \eqref{equilibrium1} and \eqref{equilibrium2},  have three types of non-negative equilibria:

 \begin{enumerate}[label=(\roman*)]
 \item The trivial equilibrium $E_0 (0,0)$;
 \item The predator-free equilibrium $E_1(a_1/b_1,0)$;
 \item The interior equilibrium $E_2(x_1^*, x_2^*)$ where $x_1^*$ and $x_2^*$ are related by
 $$x_2^*=\dfrac{w_1}{w_0 a_2}\left[a_1 x_1^* - b_1 x_1^{*2}  \right] .$$
 We have that $a_1 - b_1 x_1^*\geq 0$ since $x_1^* \geq 0$ and $x_2^* \geq 0$. The possible existence of a unique or multiple interior equilibria are shown in Fig.~\eqref{fig:Case_1}.
 \end{enumerate}

%%%%%%%figure
\begin{figure}[!htb]
\begin{center}
    \includegraphics[scale=.14]{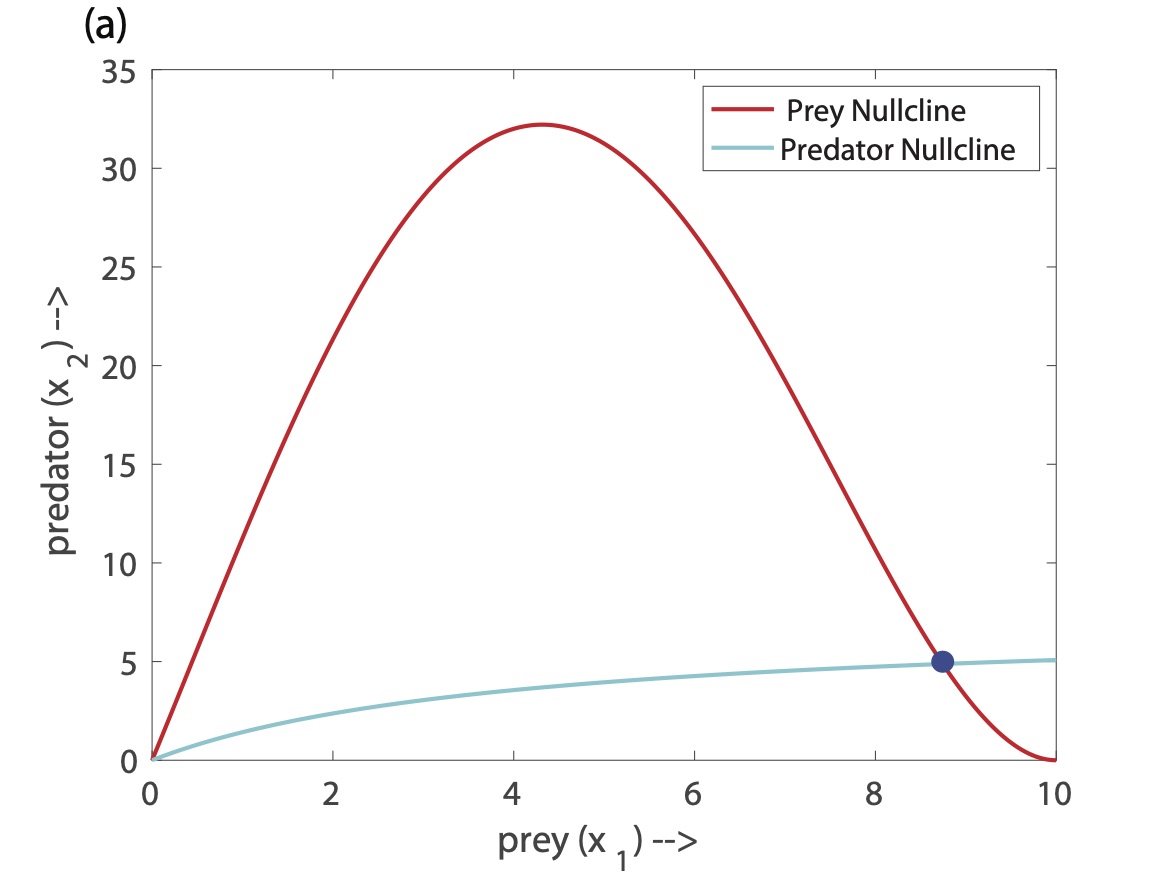}
    \includegraphics[scale=.14]{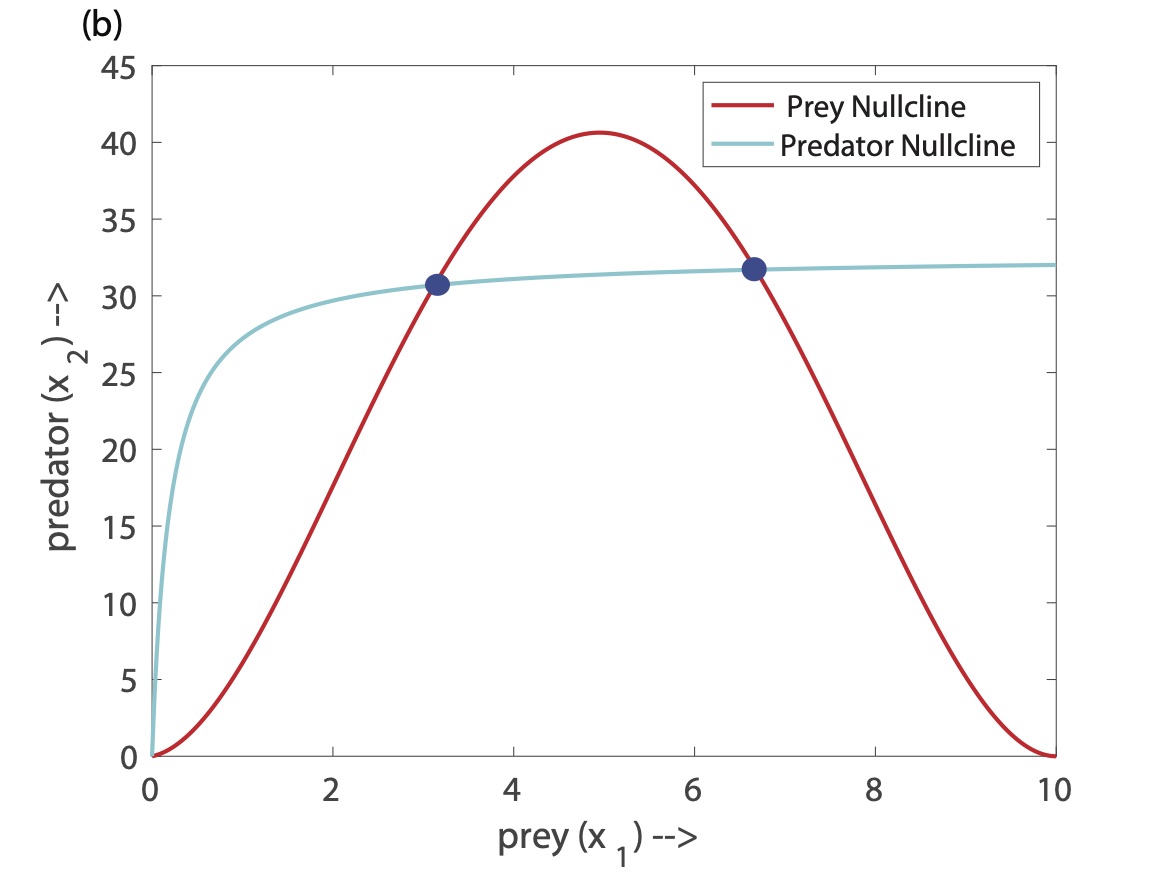}
    \includegraphics[scale=.14]{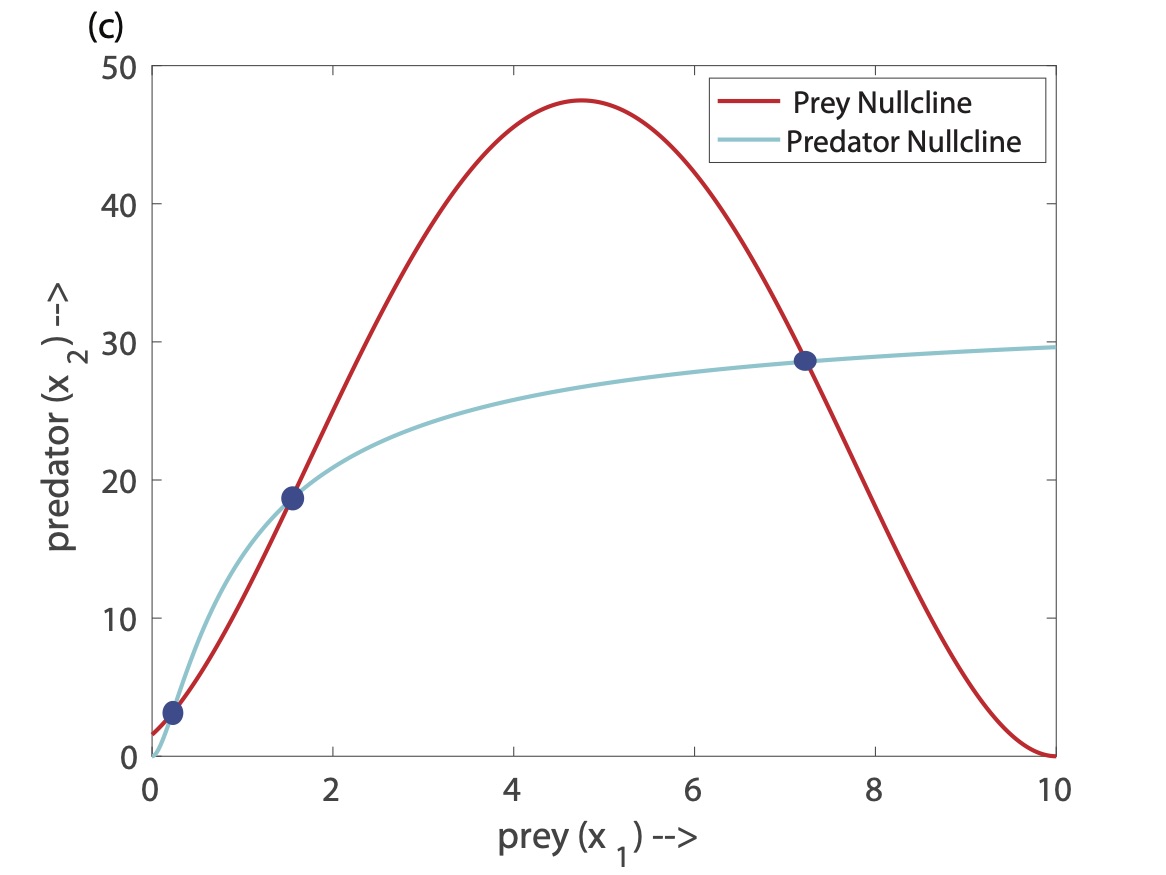}
\end{center}
 \caption{Figure (a) and (b) represent graphical illustration of the predator and prey non-trivial nullclines when $m_1=m_2=0.5$. Figure (c) represents graphical illustration of the predator and prey non-trivial nullclines when $m_1=1,~m_2=0.5$.}
      \label{fig:Case_1}
\end{figure}

\subsection{Stability Analysis of the Interior Equilibrium}~\\

The variational matrix ${\bf J^*}$ of the model \eqref{EquationMain} around the interior equilibrium
$E_2(x_1^*, x_2^*)$ is
\begin{align*}
 \bf{J^*}=  \begin{bmatrix}
      a_{11}&   a_{12}\\
      a_{21}  &  a_{22}
     \end{bmatrix},
\end{align*}
where
\begin{eqnarray*}
a_{11}&=&  a_1 - 2b_1x_1^*- {dm_1w_0{x^*}^{m_2}_2{x^*}^{m_1-1}_1 \over {(x_1^*+d)^{m_1+1}}},\\
a_{12}&=& - {m_2 w_0 {x_2^*}^{m_2-1}{x^*}^{m_1}_1 \over (x_1^*+d)^{m_1}}<0,\\
a_{21}&=&  m_1dw_1{x_2^*}^{m_2}\bigg({{x_1^*}^{m_1-1}\over (x_1^*+d)^{m_1+1}}\bigg) >0,\\
a_{22}&=& -a_2+ m_2 w_1{ x_2^*}^{m_2-1}\bigg( {x_1^*\over {x_1^*+d}}\bigg)^{m_1}.
\end{eqnarray*}
The characteristic equation corresponding to ${\bf J^*}$ evaluated at  $E_2(x_1^*, x_2^*)$ is given by
\begin{align}
\lambda^2 - \operatorname{tr} \,({\bf{J^*}}) \lambda + \det \,({\bf{J^*}}) =0,
\end{align}
where
\begin{eqnarray*}
\operatorname{tr} \,({\bf{J^*}})&=& a_{11}+a_{22}\\
&=& a_1 - a_2 - 2b_1x_1^* + m_2w_1{x^*}_2^{m_2 - 1}\bigg({x_1^*\over  x_1^*+d}\bigg)^{m_1} - dm_1w_0{x^*}_2^{m_2}\bigg({{x_1^*}^{m_1-1}\over ( x_1^*+d)^{m_1+1}}\bigg)
\end{eqnarray*}
and
\begin{eqnarray*}
\det \,({\bf{J^*}})&=& a_{11}a_{22}-a_{12}a_{21} \\
&=& \left(a_1 - 2b_1x_1^*- {dm_1w_0{x^*}^{m_2}_2{x^*}^{m_1}_1 \over {x_1^*(x_1^*+d)^{m_1+1}}} \right)  \left( - a_2 + m_2 w_1{ x_2^*}^{m_2-1}\bigg( {x_1^*\over {x_1^*+d}}\bigg)^{m_1}\right) \\
&& - \left( - {m_2 w_0 {x_2^*}^{m_2-1}{x^*}^{m_1}_1 \over (x_1^*+d)^{m_1}} \right)    \left( m_1dw_1{x_2^*}^{m_2}\bigg({{x_1^*}^{m_1-1}\over (x_1^*+d)^{m_1+1}}\bigg)   \right).
\end{eqnarray*}
Here, $\operatorname{tr} \,({\bf{J^*}})$ and $\det \,({\bf{J^*}}) $ represents the trace and determinant of the variational matrix.  Hence the stability of $E_2(x_1^*, x_2^*)$ is determined by the sign of $\det \,({\bf{J^*}}) $ and $\operatorname{tr} \,({\bf{J^*}})$.

The above results are encapsulated in the following theorem.

\begin{theorem}\label{localstabint}
The interior equilibrium $E_2(x_1^*, x_2^*)$ is locally asymptotically stable if  $\operatorname{tr} \,({\bf{J^*}}) < 0$ and $\det \,({\bf{J^*}})  > 0$ by Routh-Hurwitz stability criteria.
\end{theorem}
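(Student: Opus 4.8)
The plan is to reduce the statement to the classical linearization theorem together with the Routh--Hurwitz criterion for a quadratic. First I would observe that at an interior equilibrium $E_2(x_1^*,x_2^*)$ one has $x_1^*>0$ and $x_2^*>0$; by assumption (II) the response $g$ is smooth for $x_1>0$, and the factor $x_2^{m_2}$ is smooth for $x_2>0$, so the right-hand side of \eqref{EquationMain} is continuously differentiable on a neighborhood of $E_2$ contained in the open first quadrant. Consequently the Hartman--Grobman theorem applies: the local phase portrait of \eqref{EquationMain} near $E_2$ is topologically conjugate to that of its linearization $\dot{\mathbf{u}} = \mathbf{J^*}\mathbf{u}$, and in particular $E_2$ is locally asymptotically stable provided every eigenvalue of $\mathbf{J^*}$ has strictly negative real part.

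Next I would analyze the characteristic polynomial $P(\lambda) = \lambda^2 - \operatorname{tr}(\mathbf{J^*})\lambda + \det(\mathbf{J^*})$. Writing the two roots as $\lambda_1,\lambda_2$, we have $\lambda_1+\lambda_2 = \operatorname{tr}(\mathbf{J^*})$ and $\lambda_1\lambda_2 = \det(\mathbf{J^*})$. The Routh--Hurwitz conditions for a degree-two polynomial state that both roots lie in the open left half-plane precisely when $-\operatorname{tr}(\mathbf{J^*})>0$ and $\det(\mathbf{J^*})>0$, i.e. $\operatorname{tr}(\mathbf{J^*})<0$ and $\det(\mathbf{J^*})>0$. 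One can verify this directly: if $\det(\mathbf{J^*})>0$ the roots are either a complex-conjugate pair with real part $\tfrac12\operatorname{tr}(\mathbf{J^*})$, which is negative when $\operatorname{tr}(\mathbf{J^*})<0$, or two real roots of the same sign whose sum equals $\operatorname{tr}(\mathbf{J^*})<0$, forcing both to be negative. Combining this with the previous paragraph yields the claim.

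Since the hypotheses are imposed directly on $\operatorname{tr}(\mathbf{J^*})$ and $\det(\mathbf{J^*})$, there is essentially no computational content; the only point needing care — and the only place where the non-smooth nature of the model intervenes — is confirming that linearization is legitimate at $E_2$, which is why I would stress that $E_2$ lies in the interior of $\mathbb{R}_+^2$ and that the loss of smoothness in assumptions (V)--(VI) occurs only on the coordinate axes $x_1=0$. I do not anticipate any genuine obstacle here; the ``hard part,'' such as it is, amounts to the sign analysis of the quadratic, which the Routh--Hurwitz criterion packages cleanly.
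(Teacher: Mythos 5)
Your proposal is correct and follows essentially the same route as the paper, which simply defers to the preceding computation of $\operatorname{tr}(\mathbf{J^*})$ and $\det(\mathbf{J^*})$ and the Routh--Hurwitz criterion for the characteristic quadratic. Your added observation that the vector field is smooth near $E_2$ (the loss of smoothness occurring only on the axes), so that linearization is legitimate, is a worthwhile point the paper leaves implicit, but it does not change the argument.
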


\begin{proof}
The proof follows directly from the above discussion and hence omitted for brevity.
\end{proof}~\\

\subsection{Global Asymptotic Stability}~\\

Considering the case where $m_2=1$ and assumptions (V)-(VII) hold for our response functions $f$ and $g$, then $E_0$ is not a saddle point and the argument using the Poincare-Bendixson theorem cannot be applied. We will describe the global behavior of the system \eqref{EquationMain} by considering the relative position of the stable and unstable separatrix of the saddle point $E_1$. We denote the stable separatrix by $W^s(E_0)$ and the unstable separatrix by $W^u(E_1)$. \\
The predator nullcline is the vertical line $x_1=x_1^*$ determined by the equation $-a_2+ w_1 g(x_1)=0$. We assume
\begin{align}\label{conditionx_1}
w_1>a_2, \qquad \dfrac{a_1}{b_1}>x_1^* := \dfrac{d~ a_2^{\frac{1}{m_1}}}{w_1^{\frac{1}{m_1}}-a_2^{\frac{1}{m_1}}}.
\end{align}

The prey nullcline is the graph of the function $y=\psi (x_1)$
\begin{align}
\psi (x_1)=\dfrac{x_1f(x_1)}{w_0g(x_1)}
\end{align}
where $f(x_1)$ and $g(x_1)$ are defined in \eqref{functions}. Clearly $\psi (\frac{a_1}{b_1})=0$ and $\psi (x_1)>0$ for $0<x_1<\frac{a_1}{b_1}$. The unique interior equilibrium $E_2(x_1^*,x_2^*)$ is the intersection of the predator and prey nullclines and it can be stable or unstable depending on the sign of $\psi^{\prime}(x_1^*)$. For $\psi^{\prime}(x_1^*)>0$, $E_2$ is a repeller (unstable) and for $\psi^{\prime}(x_1^*)<0$, $E_2$ is an attractor (or locally asymptotically stable).

\begin{remark}
The predator-free equilibrium $E_1$ turns into a stable node with the loss of the unique interior equilibrium $E_2$.
\end{remark}

Based on the non-uniqueness of the solution of model \eqref{EquationMain} when $m_1<1$ and $m_2=1$, we  have the following results.

\begin{lemma}[ Proposition 3.1 in \cite{BS19}]
Assume that $W^s(E_0)$ is above $W^u(E_1)$. If $E_2$ is a repeller, then it is surrounded by at least one limit cycle. If the system can have at most one cycle, then $E_2$ is surrounded by at least a unique limit cycle which is orbitally asymptotically stable. This limit cycle is not globally orbitally asymptotically stable, even if it is unique. If $E_2$ is an attractor and if the system has no cycles, then all orbits under $W^s(E_0)$ converge towards $E_2$. $E_2$ is not globally asymptotically stable, even if it is not surrounded by any unstable limit cycle.
\end{lemma}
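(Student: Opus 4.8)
The plan is to run a Poincar\'e--Bendixson argument on a compact, positively invariant ``annular'' region encircling $E_2$ while excluding $E_0$ and $E_1$. Because the vector field of \eqref{EquationMain} fails to be Lipschitz at $x_1=0$ (assumptions (V)--(VII)), $E_0$ is not a hyperbolic saddle and the usual phase-plane dichotomy is unavailable; the hypothesis ``$W^s(E_0)$ lies above $W^u(E_1)$'' is exactly what lets us splice the two separatrices together with an arc of the absorbing set $\Theta$ from Lemma \ref{boundedness} into the outer boundary of such a region. Concretely, I would take the outer boundary to be assembled from a segment of the positive $x_1$-axis (invariant, with the flow running from $E_0$ toward $E_1$), the unstable branch $W^u(E_1)$, an arc of $\partial\Theta$, and the separatrix $W^s(E_0)$, closed up precisely because $W^s(E_0)$ sits above $W^u(E_1)$; the inner boundary is a small circle $S_\varepsilon$ about $E_2$. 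This produces a closed region $\mathcal A$ between $S_\varepsilon$ and the outer boundary that contains $E_2$ but is bounded away from $E_0$ and $E_1$.

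First I would nail down the local pieces. Under \eqref{conditionx_1} one checks from the Jacobian on the $x_1$-axis that $E_1$ is a hyperbolic saddle whose stable manifold is the positive $x_1$-axis and whose one-dimensional unstable manifold $W^u(E_1)$ enters $\{x_1>0,\ x_2>0\}$; by Lemma \ref{dissipative} its forward orbit is bounded, so it has a nonempty $\omega$-limit set in $\Theta$. Next, as recorded before the statement, $E_2$ is a repeller exactly when $\psi'(x_1^*)>0$ and an attractor when $\psi'(x_1^*)<0$, which via Theorem \ref{localstabint} is the sign of $\operatorname{tr}({\bf J^*})$ (with $\det({\bf J^*})>0$ forced by the nullcline configuration at $E_2$): in the repeller case the flow crosses every small $S_\varepsilon$ strictly outward, in the attractor case strictly inward. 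Finally, since the field is smooth on $\{x_1>0\}$, solutions there are unique, so distinct special orbits cannot cross and nothing can cross $W^u(E_1)$ or $W^s(E_0)$; with the invariance of the axes this makes $\mathcal A$ positively invariant.

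With $\mathcal A$ in hand the conclusions follow. If $E_2$ is a repeller, $\mathcal A$ is a compact positively invariant set free of equilibria, so Poincar\'e--Bendixson yields a periodic orbit in $\mathcal A$ necessarily enclosing $E_2$: at least one limit cycle. If in addition the system has at most one periodic orbit, there is exactly one $\Gamma$; since every orbit in $\mathcal A$ outside $\Gamma$, and every orbit between $S_\varepsilon$ and $\Gamma$, has $\omega$-limit set a periodic orbit inside $\mathcal A$, that orbit must be $\Gamma$, so $\Gamma$ attracts from both sides and is orbitally asymptotically stable. It is \emph{not} globally orbitally asymptotically stable because orbits started on the $x_2$-axis --- where $\dot x_1=0$ and $\dot x_2=-a_2x_2$ --- tend to $E_0$, and (by assumption (VII)) orbits starting in the interior below $W^s(E_0)$ reach $x_1=0$ in finite time and then flow down to $E_0$, rather than to $\Gamma$. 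In the attractor case with no periodic orbits, Poincar\'e--Bendixson forces $\omega(\cdot)=\{E_2\}$ for every orbit in $\mathcal A$; one then checks that every orbit starting strictly above $W^s(E_0)$ eventually enters $\mathcal A$ and hence converges to $E_2$, while orbits on or below $W^s(E_0)$ again go to $E_0$, so $E_2$ is not globally asymptotically stable even when it is not surrounded by an unstable cycle.

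The main obstacle is the existence and regularity of $W^s(E_0)$ itself: because the field is not Lipschitz at $x_1=0$ one must show there is a genuine interior separatrix orbit approaching $E_0$ (distinct from the $x_2$-axis), establish enough monotonicity that ``above/below $W^u(E_1)$'' is meaningful, and justify finite-time arrival at $x_1=0$ for orbits on its lower side --- this is where assumption (VII), convergence of $\int_{\varepsilon}^{\beta} dx_1/g(x_1)$, is essential. A secondary technical point is verifying the sign of the vector field on each boundary arc near the corners $E_0$ and $E_1$ so that $\mathcal A$ is truly invariant. Once these are settled, the Poincar\'e--Bendixson bookkeeping above is routine.
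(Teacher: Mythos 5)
First, a point of comparison: the paper does not prove this lemma at all — it is quoted as Proposition 3.1 of \cite{BS19}, so there is no internal proof to match, and your sketch is a reconstruction of Beroual--Sari's argument. Your overall strategy is the right kind of argument for this result: assemble a bounded, positively invariant region under the separatrix from the invariant $x_1$-axis, the orbit $W^u(E_1)$, an arc of the absorbing set of Lemma \ref{boundedness} and the orbit $W^s(E_0)$, excise a small disc about $E_2$, apply Poincar\'e--Bendixson, and attribute the failure of global stability to the non-Lipschitz behaviour of the vector field at $x_1=0$ (assumptions (V)--(VII)).

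However, your write-up contains a genuine orientation error that makes it conclude the opposite of the lemma's last two claims. You assert that orbits \emph{below} $W^s(E_0)$ reach $x_1=0$ in finite time and flow to $E_0$, and, in the attractor case, that orbits \emph{above} $W^s(E_0)$ enter $\mathcal{A}$ and converge to $E_2$ while those on or below go to $E_0$. The lemma, the Proposition that follows it, and the simulations (Fig. \ref{fig:nullcline_manifold1}) say exactly the reverse: the basin of $E_2$ (or of the limit cycle) is the region \emph{under} $W^s(E_0)$, whereas initial data \emph{above} the separatrix --- i.e.\ with large predator density, consistent with the requirement $x_2(0)\gg 1$ in Theorem \ref{FiniteTimeTheorem} --- are the ones driven to prey extinction in finite time and then to $E_0$ along the $x_2$-axis. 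This is not a mere relabeling: $E_2$ and the limit cycle lie under $W^s(E_0)$, so if all interior orbits below the separatrix hit $x_1=0$ in finite time, your own trapping region $\mathcal{A}$ could not be positively invariant and the Poincar\'e--Bendixson half of your argument would collapse. Separately, the step you flag as the ``main obstacle'' --- existence of a genuine interior separatrix orbit $W^s(E_0)$ terminating at $E_0$, the meaning of ``above/below'' relative to $W^u(E_1)$, and finite-time arrival at $x_1=0$ via assumption (VII) --- is precisely the content the paper delegates to \cite{BS19}; without carrying it out, the construction of $\mathcal{A}$ and the non-globality conclusions remain unproved, so the proposal must either supply that analysis or, as the paper does, cite it.
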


\begin{proposition}
Assume $\psi^{\prime} (x_1^*)\leq 0$ and \eqref{conditionx_1} hold, then $W^s(E_0)$ is above $W^u(E_1)$ and the basin of attraction of $E_2$ is the positive region of the plane located under $W^s(E_0)$. Hence $E_2$ is not globally asymptotically stable.
\end{proposition}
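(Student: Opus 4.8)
The plan is to exploit the phase-plane structure already set up in this subsection, working in the region $\Theta$ from Lemma \ref{boundedness} where all solutions eventually live. First I would recall that under \eqref{conditionx_1} the point $E_1(a_1/b_1,0)$ is a hyperbolic saddle (with the $x_1$-axis as part of its stable/unstable structure and a branch $W^u(E_1)$ entering the open first quadrant), while $E_0(0,0)$ under assumptions (V)--(VII) is a non-hyperbolic point toward which a separatrix $W^s(E_0)$ flows; these are exactly the objects named in the preceding lemma (Proposition 3.1 of \cite{BS19}). The hypothesis $\psi^{\prime}(x_1^*)\le 0$ makes $E_2$ an attractor (as noted just above the lemma), so the relevant clause of that lemma is the last one: if $E_2$ is an attractor and the system has no cycles, all orbits under $W^s(E_0)$ converge to $E_2$, and $E_2$ fails to be globally asymptotically stable.

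The key steps, in order, would be: (1) verify that $\psi^{\prime}(x_1^*)\le 0$ together with \eqref{conditionx_1} forces the configuration ``$W^s(E_0)$ above $W^u(E_1)$'' — this is the geometric heart of the argument and I expect it to be the main obstacle; one shows that along the prey nullcline $y=\psi(x_1)$ the flow is monotone in the appropriate sense, and that because $\psi$ is decreasing at $x_1^*$ (hence $E_2$ sits on the falling branch) the unstable branch $W^u(E_1)$ emanating from the saddle at $a_1/b_1$ cannot rise above the stable separatrix of the origin; a Dulac/divergence computation (or the ``at most one cycle'' structure invoked in the cited lemma) is used to rule out periodic orbits encircling $E_2$. (2) Invoke the quoted lemma directly: with $W^s(E_0)$ above $W^u(E_1)$, $E_2$ an attractor, and no cycles, every orbit starting strictly below $W^s(E_0)$ converges to $E_2$, so the basin of attraction of $E_2$ contains — in fact equals — the open positive region under $W^s(E_0)$. (3) Conclude non-global stability: since $W^s(E_0)$ is a nontrivial invariant separatrix lying in the interior and orbits on or above it are attracted to $E_0$ (or escape along the boundary) rather than to $E_2$, the basin of $E_2$ is a proper subset of $\mathbb{R}_+^2$, so $E_2$ is not globally asymptotically stable.

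I would then close by noting that the boundedness from Lemma \ref{boundedness} guarantees all these orbits stay in the compact dissipative region, so Poincaré--Bendixson-type reasoning applies on each side of $W^s(E_0)$ and the dichotomy ``converge to $E_2$ versus approach $E_0$'' is exhaustive. The subtlety worth flagging is that $E_0$ is not a genuine saddle here (assumption (V) gives $g'(0^+)=+\infty$), so one cannot use a standard stable-manifold argument at the origin; instead the existence and location of $W^s(E_0)$ must be taken from the cited work \cite{BS19}, and the only genuinely new content is the implication $\psi^{\prime}(x_1^*)\le 0 \Rightarrow$ (separatrix ordering), i.e. step (1). That step is where I would spend essentially all the effort, comparing the two separatrices by a shooting/monotonicity argument along the nullclines.
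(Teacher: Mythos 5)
Your proposal follows essentially the same route as the paper: the paper's own proof consists only of the remark that the argument is similar to Proposition 4.2 of \cite{BS19} and is omitted for brevity, and your plan --- invoke the preceding lemma (Proposition 3.1 of \cite{BS19}), establish the ordering of $W^s(E_0)$ above $W^u(E_1)$ from the nullcline geometry under \eqref{conditionx_1} and $\psi^{\prime}(x_1^*)\leq 0$, rule out cycles, and conclude non-global stability from the separatrix --- is exactly the machinery the paper defers to. Your sketch is in fact more detailed than what the paper records; the only caveat is that the genuinely hard steps (the separatrix ordering and the no-cycles hypothesis of the quoted lemma) are, in your write-up as in the paper, ultimately delegated to the argument of \cite{BS19} rather than carried out.
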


\begin{proof}
The proof is similar to the proof of Proposition 4.2 in \cite{BS19} and omitted for brevity.
\end{proof}

\subsection{Saddle-Node Bifurcation Analysis}

We investigate the possibility of saddle-node bifurcation of the positive interior equilibrium $E_2$ by using the intrinsic death rate of the predator population as a bifurcation parameter.

The following theorem states the restrictions for occurrence of a saddle-node bifurcation for model \eqref{EquationMain}.

\begin{theorem}\label{saddle_node_a2}
The model \eqref{EquationMain} undergoes a saddle-node bifurcation around $E_2$ at $a_2^*$ when the system parameters satisfy the restriction $\det \,({\bf{J^*}}) = 0 $ along with the condition  $\operatorname{tr} \,({\bf{J^*}})<0$.
\end{theorem}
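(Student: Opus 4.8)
The plan is to verify the standard Sotomayor conditions for a saddle-node bifurcation (see Perko, \emph{Differential Equations and Dynamical Systems}) at the parameter value $a_2 = a_2^*$ defined implicitly by $\det({\bf J^*}) = 0$. First I would note that the condition $\det({\bf J^*}) = 0$ together with $\operatorname{tr}({\bf J^*}) < 0$ forces the Jacobian ${\bf J^*}$ at $E_2$ to have a simple zero eigenvalue $\lambda = 0$ (the other eigenvalue being $\operatorname{tr}({\bf J^*}) \neq 0$). I would then compute a right eigenvector $V = (v_1, v_2)^T$ and a left eigenvector $W = (w_1, w_2)^T$ of ${\bf J^*}$ associated with $\lambda = 0$; since ${\bf J^*}$ has rank one, these are obtained explicitly from the entries $a_{ij}$, e.g. $V = (a_{12}, -a_{11})^T$ (equivalently $(-a_{22}, a_{21})^T$) and $W = (a_{21}, -a_{11})^T$ (equivalently $(-a_{22}, a_{12})^T$), up to scaling, using $a_{11}a_{22} = a_{12}a_{21}$.

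Next I would write the vector field as $F(x_1, x_2; a_2)$ and check the three transversality conditions of Sotomayor's theorem at $(E_2; a_2^*)$:
\begin{enumerate}[label=(\roman*)]
\item $W^T F_{a_2}(E_2; a_2^*) \neq 0$, where $F_{a_2} = (0, -x_2^*)^T$, so this reduces to $-w_2\, x_2^* \neq 0$, i.e. the second component $w_2$ of the left eigenvector is nonzero, which holds generically;
\item $W^T \big[ D^2 F(E_2; a_2^*)(V, V) \big] \neq 0$, which requires computing the second-order partial derivatives of the right-hand side of \eqref{EquationMain} with respect to $x_1, x_2$ at $E_2$ and contracting twice with $V$ and once with $W$;
\item the zero eigenvalue is simple (already established) and $V$ spans its eigenspace.
\end{enumerate}
Condition (i) and (iii) are essentially immediate from the form of the model and the rank-one structure of ${\bf J^*}$. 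The bulk of the work, and the main obstacle, is condition (ii): one must differentiate the non-smooth terms $\big(\tfrac{x_1}{x_1+d}\big)^{m_1} x_2^{m_2}$ twice — valid here since $x_1^*, x_2^* > 0$ so the response is smooth in a neighborhood of $E_2$ — and then show that the resulting scalar $W^T D^2F(V,V)$ is nonzero. I expect this to be a lengthy but routine computation; the nondegeneracy can either be imposed as a genericity hypothesis or shown to follow from $b_1 \neq 0$ (the curvature of the prey logistic term contributes a term $-2b_1 v_1^2 w_1$ that does not generically cancel).

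Finally I would assemble these verifications into the conclusion: by Sotomayor's theorem, as $a_2$ passes through $a_2^*$ the system \eqref{EquationMain} exhibits a saddle-node bifurcation at $E_2$ — two interior equilibria (one a saddle, one a node, the latter being a stable node since $\operatorname{tr}({\bf J^*}) < 0$) coalesce and annihilate, consistent with the nullcline picture in Fig.~\eqref{fig:Case_1} and the Remark that $E_1$ becomes a stable node upon loss of $E_2$. I would remark that the sign of $\operatorname{tr}({\bf J^*})$ is what distinguishes this codimension-one saddle-node scenario from a Bogdanov--Takens point, where both eigenvalues would vanish.
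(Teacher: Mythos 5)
Your proposal follows essentially the same route as the paper: both apply Sotomayor's theorem at $a_2=a_2^*$, use the rank-one structure of ${\bf J^*}$ to write the right and left null eigenvectors explicitly in terms of the entries $a_{ij}$, verify the first transversality condition as $-v_2x_2^*\neq 0$ from $F_{a_2}=(0,-x_2)^T$, and invoke the nondegeneracy of the quadratic term to conclude. The only difference is cosmetic: the paper simply asserts $V^T\bigl[D^2F(X,a_2^*)(U,U)\bigr]\neq 0$ without computation, whereas you flag it as the main work and propose a genericity/curvature argument — if anything, your treatment of that step is the more careful one.
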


\begin{proof}
To validate the restriction for the occurrence of saddle-node bifurcation, we apply Sotomayor's theorem \cite{Perko13} at $a_2=a_2^*$. At $a_2=a_2^*$, it can be seen that $\det \,({\bf{J^*}}) = 0 $ and $\operatorname{tr} \,({\bf{J^*}})<0$ which indicates that the Jacobian $\,({\bf{J^*}})$ admits a zero eigenvalue. Let $U$ and $V$ be the eigenvectors corresponding to the zero eigenvalue of the matrix $\,({\bf{J^*}})$ and $\,({\bf{J^*}})^T$ respectively. We obtain that $U=(u_1,u_2)^T$ and $V=(v_1,v_2)^T$, where $u_1=-\frac{a^*_{12}u_2}{a^*_{11}}$, $v_1=-\frac{a^*_{21}v_2}{a^*_{11}}$ and $u_2,v_2 \in \mathbb{R} \setminus \{0\}$.

Furthermore, let $F=(F_1,F_2)^T$ and $X=(x_1^*,x_2^*)^T$, where $F_1, F_2$ are given by
\begin{align*}
F_1 &= a_{1} x_{1} -b_{1} x_{1}^{2} - w _{0} \left(\frac{x_{1} }{x_{1} +d} \right)^{m_{{\kern 1pt} 1}} x_{2}^{m_{{\kern 1pt} 2}}    \\
F_2 &= -a_{2} x_{2} + w_{1} \left(\frac{x_{1} }{x_{1} +d} \right)^{m_{{\kern 1pt} {\kern 1pt} 1}} x_{2}^{m_{{\kern 1pt} 2} }.
\end{align*}
Now
\begin{align*}
V^T F_{a_2}(X,a_2^*) = (v_1,v_2)(0,-x_2)^T=-v_2 x_2 \neq 0,
\end{align*}
and
\begin{align*}
V^T \left[D^2 F(X,a_2^*)(U,U)\right] \neq 0.
\end{align*}
Hence, from Sotomayor's theorem the model undergoes a saddle-node bifurcation around $E_2$ at $a_2=a_2^*$.
\end{proof}

\begin{theorem}\label{saddle_node_w1}
The model \eqref{EquationMain} undergoes a saddle-node bifurcation around $E_2$ at $w_1^*$ when the system parameters satisfy the restriction $\det \,({\bf{J^*}}) = 0 $ along with the condition  $\operatorname{tr} \,({\bf{J^*}})<0$.
\end{theorem}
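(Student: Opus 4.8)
The plan is to mirror the proof of Theorem~\ref{saddle_node_a2} almost verbatim, now taking the biomass conversion efficiency $w_1$ as the bifurcation parameter in place of $a_2$. First I would observe that the hypotheses $\det({\bf J^*})=0$ and $\operatorname{tr}({\bf J^*})<0$ guarantee that at $w_1=w_1^*$ the Jacobian ${\bf J^*}$ has a simple zero eigenvalue (the nonzero eigenvalue being $\operatorname{tr}({\bf J^*})<0$), so the center manifold at $E_2$ is one-dimensional and Sotomayor's theorem \cite{Perko13} is the appropriate tool. I would then reuse the same right and left null vectors $U=(u_1,u_2)^T$ and $V=(v_1,v_2)^T$ of ${\bf J^*}$ and $({\bf J^*})^T$ respectively, with $u_1=-a_{12}^*u_2/a_{11}^*$, $v_1=-a_{21}^*v_2/a_{11}^*$, and $u_2,v_2\neq 0$, exactly as before.

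The two conditions of Sotomayor's theorem that must be checked are the transversality condition $V^T F_{w_1}(X,w_1^*)\neq 0$ and the nondegeneracy condition $V^T\big[D^2F(X,w_1^*)(U,U)\big]\neq 0$. For the first, I would compute $F_{w_1}=(0,\;g(x_1^*)x_2^{*m_2})^T=(0,\;(x_1^*/(x_1^*+d))^{m_1}x_2^{*m_2})^T$, so that $V^T F_{w_1}=v_2\,(x_1^*/(x_1^*+d))^{m_1}x_2^{*m_2}$, which is nonzero because $v_2\neq 0$ and $x_1^*,x_2^*>0$. For the second, the Hessian computation is identical in structure to the one already carried out for the $a_2$ case — the parameter $w_1$ enters only the (linear) $x_2$-term of $F_2$, which contributes nothing to second derivatives, so $D^2F(X,w_1^*)$ is unchanged as a function of the state and the generic nonvanishing of $V^T[D^2F(X,w_1^*)(U,U)]$ follows from the same argument. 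Concluding, Sotomayor's theorem then yields a saddle-node bifurcation of $E_2$ at $w_1=w_1^*$.

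The only genuinely new bookkeeping is tracking how $w_1$ appears in the equilibrium relations: unlike $a_2$, the parameter $w_1$ also sits in the prey nullcline indirectly through $x_2^*=\frac{w_1}{w_0 a_2}(a_1 x_1^*-b_1 x_1^{*2})$, so one should be careful to state the bifurcation in terms of a fixed equilibrium $X=(x_1^*,x_2^*)$ that solves the steady-state equations at $w_1=w_1^*$, and to treat $F$ as a function of the state and $w_1$ with the equilibrium held at its $w_1^*$ value when forming the partial derivatives $F_{w_1}$ and $D^2F$. The main obstacle, such as it is, will be making sure the nondegeneracy inequality $V^T[D^2F(X,w_1^*)(U,U)]\neq 0$ is not accidentally violated for the specific algebraic relations among $x_1^*,x_2^*$ forced at the bifurcation point; as in the $a_2$ case this is handled by asserting it as a genericity (parameter-restriction) condition rather than proved identically, and the proof can be stated as "omitted for brevity" in parallel with the preceding theorem.
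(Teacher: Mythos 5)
Your proposal is correct and matches the paper's intent exactly: the paper simply states that the proof is analogous to Theorem \ref{saddle_node_a2} and omits it, and your argument is precisely that analogue, applying Sotomayor's theorem with $w_1$ as the bifurcation parameter and correctly noting that $V^T F_{w_1}(X,w_1^*)=v_2\left(\frac{x_1^*}{x_1^*+d}\right)^{m_1}{x_2^*}^{m_2}\neq 0$ while the Hessian condition is asserted generically as in the $a_2$ case.
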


\begin{theorem}\label{saddle_node_w0}
The model \eqref{EquationMain} undergoes a saddle-node bifurcation around $E_2$ at $w_0^*$ when the system parameters satisfy the restriction $\det \,({\bf{J^*}}) = 0 $ along with the condition  $\operatorname{tr} \,({\bf{J^*}})<0$.
\end{theorem}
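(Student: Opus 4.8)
The plan is to follow the template of the proof of Theorem~\ref{saddle_node_a2} essentially verbatim, taking $w_0$ as the bifurcation parameter; the only structural difference is that $w_0$ enters the vector field through $F_1$ rather than through $F_2$. At $w_0=w_0^*$ the standing hypotheses $\det({\bf J^*})=0$ and $\operatorname{tr}({\bf J^*})<0$ force the eigenvalues of ${\bf J^*}$ to be $0$ and $\operatorname{tr}({\bf J^*})<0$, so the zero eigenvalue is simple. Moreover $\det({\bf J^*})=0$ together with $a_{12}^*<0<a_{21}^*$ yields $a_{11}^*a_{22}^*=a_{12}^*a_{21}^*<0$, hence $a_{11}^*\neq0$. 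Exactly as before, right and left null vectors $U=(u_1,u_2)^T$ of ${\bf J^*}$ and $V=(v_1,v_2)^T$ of $({\bf J^*})^T$ then exist with $u_1=-a_{12}^*u_2/a_{11}^*$, $v_1=-a_{21}^*v_2/a_{11}^*$ and $u_2,v_2\in\mathbb{R}\setminus\{0\}$; in particular $u_1\neq0$ and $v_1\neq0$.

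Next I would verify the first transversality condition of Sotomayor's theorem~\cite{Perko13}. Since $w_0$ occurs only in $F_1$,
\[
F_{w_0}(X,w_0^*)=\left(-\left(\frac{x_1^*}{x_1^*+d}\right)^{m_1}{x_2^*}^{m_2},\ 0\right)^T,
\]
so that
\[
V^T F_{w_0}(X,w_0^*)=-v_1\left(\frac{x_1^*}{x_1^*+d}\right)^{m_1}{x_2^*}^{m_2}\neq0,
\]
because $v_1\neq0$ and $x_1^*,x_2^*>0$. (The companion Theorem~\ref{saddle_node_w1} is handled the same way: there $w_1$ sits in $F_2$, so $F_{w_1}(X,w_1^*)=\left(0,\left(x_1^*/(x_1^*+d)\right)^{m_1}{x_2^*}^{m_2}\right)^T$ and the first condition reads $V^T F_{w_1}=v_2\left(x_1^*/(x_1^*+d)\right)^{m_1}{x_2^*}^{m_2}\neq0$.)

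The step I expect to be the main obstacle is the second, nondegeneracy condition $V^T\bigl[D^2F(X,w_0^*)(U,U)\bigr]\neq0$. This requires computing the $(x_1,x_2)$-Hessians of $F_1$ and $F_2$ at $E_2$, the nontrivial entries being the second partials of $\left(x_1/(x_1+d)\right)^{m_1}x_2^{m_2}$, which carry the factors $m_1$, $m_1-1$, $m_2$, $m_2-1$ and powers of $x_1^*$, $x_1^*+d$ and $x_2^*$; one then forms the vector with components $\sum_{j,k}(\partial_{x_j}\partial_{x_k}F_i)(X,w_0^*)\,u_ju_k$ for $i=1,2$ and contracts with $V$. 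Substituting $u_1=-a_{12}^*u_2/a_{11}^*$ and $v_1=-a_{21}^*v_2/a_{11}^*$ and simplifying with the equilibrium identities $a_1x_1^*-b_1{x_1^*}^2=w_0\left(x_1^*/(x_1^*+d)\right)^{m_1}{x_2^*}^{m_2}$ and $a_2x_2^*=w_1\left(x_1^*/(x_1^*+d)\right)^{m_1}{x_2^*}^{m_2}$ collapses this to a single rational expression in the parameters, and the remaining work is to argue that it does not vanish identically, so that for a generic parameter choice the bifurcation is genuinely of saddle-node type. Granting both conditions, Sotomayor's theorem gives that \eqref{EquationMain} undergoes a saddle-node bifurcation at $E_2$ as $w_0$ crosses $w_0^*$, which completes the proof.
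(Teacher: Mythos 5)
Your proposal matches the paper's approach exactly: the paper proves this case by declaring it ``similar to the proof of Theorem~\ref{saddle_node_a2},'' i.e.\ Sotomayor's theorem with the parameter now entering through $F_1$, which is precisely what you carry out. Your additional observations --- that $\det({\bf J^*})=0$ with $a_{12}^*a_{21}^*<0$ forces $a_{11}^*\neq 0$ and $v_1\neq 0$, so the first transversality condition genuinely holds, and that the nondegeneracy condition $V^T\bigl[D^2F(X,w_0^*)(U,U)\bigr]\neq 0$ is asserted generically rather than computed --- are more careful than the paper itself, which simply states both conditions without verification.
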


\begin{theorem}\label{saddle_node_b1}
The model \eqref{EquationMain} undergoes a saddle-node bifurcation around $E_2$ at $b_1^*$ when the system parameters satisfy the restriction $\det \,({\bf{J^*}}) = 0 $ along with the condition  $\operatorname{tr} \,({\bf{J^*}})<0$.
\end{theorem}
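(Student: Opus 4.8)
The plan is to repeat the argument of Theorem~\ref{saddle_node_a2} almost verbatim, now with $b_1$ playing the role of the bifurcation parameter, and to apply Sotomayor's theorem \cite{Perko13} at $b_1=b_1^*$, the value for which $\det\,({\bf{J^*}})=0$ while $\operatorname{tr}\,({\bf{J^*}})<0$. First I would observe that at such a $b_1^*$ the matrix ${\bf J^*}$ has a simple zero eigenvalue: the characteristic equation reduces to $\lambda^2-\operatorname{tr}\,({\bf{J^*}})\lambda=0$, with roots $0$ and $\operatorname{tr}\,({\bf{J^*}})<0$. Moreover, since $\det\,({\bf{J^*}})=a_{11}a_{22}-a_{12}a_{21}=0$ together with $a_{12}<0$, $a_{21}>0$ forces $a_{11}a_{22}=a_{12}a_{21}<0$, both $a^*_{11}$ and $a^*_{22}$ are nonzero. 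Hence the right eigenvector $U=(u_1,u_2)^T$ of ${\bf J^*}$ and the left eigenvector $V=(v_1,v_2)^T$ (the right eigenvector of $({\bf J^*})^T$) for the zero eigenvalue are well defined, with $u_1=-a^*_{12}u_2/a^*_{11}$, $v_1=-a^*_{21}v_2/a^*_{11}$ and $u_2,v_2\in\mathbb{R}\setminus\{0\}$, exactly as before.

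Next I would compute the parameter derivative. Writing $F=(F_1,F_2)^T$ and $X=(x_1^*,x_2^*)^T$ as in Theorem~\ref{saddle_node_a2}, only $F_1$ depends on $b_1$, and it does so through the single term $-b_1x_1^2$, so $F_{b_1}(X,b_1^*)=(-x_1^{*2},0)^T$. The first transversality condition of Sotomayor's theorem then reads
\[
V^T F_{b_1}(X,b_1^*) = -v_1\,x_1^{*2} = \frac{a^*_{21}}{a^*_{11}}\,v_2\,x_1^{*2} \neq 0,
\]
which holds since $x_1^*>0$ (as $E_2$ is an interior equilibrium), $v_2\neq 0$, $a^*_{21}>0$, and $a^*_{11}\neq 0$. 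For the second condition I would note that at $E_2$ we have $x_1^*>0$ and $x_2^*>0$, so the functional-response term $\left(\frac{x_1}{x_1+d}\right)^{m_1}x_2^{m_2}$ is smooth in a neighbourhood of $E_2$ (the loss of regularity at $x_1=0$ is irrelevant there); all second partials needed for the Hessian $D^2F(X,b_1^*)$ therefore exist, with the $b_1$-term contributing only the constant $-2b_1^*$ to $\partial^2F_1/\partial x_1^2$. Contracting $D^2F(X,b_1^*)$ twice with $U$ and then pairing with $V$ yields an explicit expression in $m_1,m_2,w_0,w_1,d$ and the coordinates of $E_2$; one verifies that $V^T[D^2F(X,b_1^*)(U,U)]\neq 0$. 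Sotomayor's theorem then gives a saddle-node bifurcation of $E_2$ at $b_1=b_1^*$.

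The only substantive work, just as in the $a_2$ case, is the second-order nondegeneracy condition: $V^T F_{b_1}$ collapses to a product of quantities with known signs, but $V^T[D^2F(X,b_1^*)(U,U)]$ is a sum of several monomial contributions coming from the second derivatives of $x_1f(x_1)$, of $b_1x_1^2$, and of the generalized Holling term, so confirming that it does not vanish requires carrying out the full differentiation and a little bookkeeping on the bifurcation set $\{\det\,({\bf{J^*}})=0\}$. Everything else is a line-by-line transcription of the proof of Theorem~\ref{saddle_node_a2}, with $x_2$ replaced by $x_1^{*2}$ in the $F_{b_1}$ term.
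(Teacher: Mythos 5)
Your proposal is correct and follows essentially the same route as the paper: the paper's own proof of this statement is simply the remark that it is analogous to Theorem~\ref{saddle_node_a2}, i.e.\ an application of Sotomayor's theorem at $b_1=b_1^*$ under $\det\,({\bf{J^*}})=0$, $\operatorname{tr}\,({\bf{J^*}})<0$, and your transcription with $F_{b_1}(X,b_1^*)=(-x_1^{*2},0)^T$ in place of $(0,-x_2)^T$, giving $V^TF_{b_1}(X,b_1^*)=\frac{a^*_{21}}{a^*_{11}}v_2x_1^{*2}\neq 0$, is exactly the intended argument. Your honest flagging that the second-order condition $V^T\left[D^2F(X,b_1^*)(U,U)\right]\neq 0$ still requires an explicit computation is, if anything, more careful than the paper, which asserts it without verification even in the $a_2$ case.
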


\begin{theorem}\label{saddle_node_a1}
The model \eqref{EquationMain} undergoes a saddle-node bifurcation around $E_2$ at $a_1^*$ when the system parameters satisfy the restriction $\det \,({\bf{J^*}}) = 0 $ along with the condition  $\operatorname{tr} \,({\bf{J^*}})<0$.
\end{theorem}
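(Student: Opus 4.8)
\textbf{Proof plan for Theorem \ref{saddle_node_a1}.}

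The plan is to mirror the argument of Theorem \ref{saddle_node_a2}, replacing the bifurcation parameter $a_2$ by $a_1$, and to verify the three Sotomayor transversality conditions at the candidate value $a_1 = a_1^*$ determined by $\det({\bf J}^*) = 0$. First I would note that, since by hypothesis $\det({\bf J}^*) = 0$ and $\operatorname{tr}({\bf J}^*) < 0$ at $a_1 = a_1^*$, the Jacobian ${\bf J}^*$ has a simple zero eigenvalue; let $U = (u_1,u_2)^T$ and $V = (v_1,v_2)^T$ be right and left null vectors of ${\bf J}^*$ respectively, which (exactly as in the proof of Theorem \ref{saddle_node_a2}) can be taken as $u_1 = -\tfrac{a_{12}^*}{a_{11}^*} u_2$, $v_1 = -\tfrac{a_{21}^*}{a_{11}^*} v_2$ with $u_2, v_2 \in \mathbb{R}\setminus\{0\}$ (assuming $a_{11}^* \neq 0$; the degenerate case $a_{11}^* = 0$ forces $a_{12}^* a_{21}^* = 0$ by $\det = 0$, which is excluded since $a_{12}^* < 0$ and $a_{21}^* > 0$).

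Next I would check the parameter-derivative condition. Writing $F = (F_1, F_2)^T$ as in the statement of Theorem \ref{saddle_node_a2}, we have $F_{a_1}(X, a_1^*) = (x_1^*, 0)^T$ since $a_1$ appears only in the first equation, linearly. Hence
\begin{align*}
V^T F_{a_1}(X, a_1^*) = (v_1, v_2)\,(x_1^*, 0)^T = v_1 x_1^* = -\frac{a_{21}^*}{a_{11}^*}\, v_2\, x_1^*,
\end{align*}
which is nonzero provided $x_1^* > 0$ (true for an interior equilibrium), $v_2 \neq 0$, and $a_{21}^* \neq 0$ (true since $a_{21}^* > 0$). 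The second transversality condition,
\begin{align*}
V^T\left[D^2 F(X, a_1^*)(U, U)\right] \neq 0,
\end{align*}
requires computing the Hessians of $F_1$ and $F_2$; since $a_1$ enters $F_1$ only through the linear term $a_1 x_1$, the Hessian $D^2 F$ is independent of $a_1$, so this is the \emph{same} nondegeneracy condition that appeared in Theorem \ref{saddle_node_a2} and can be invoked in the same way — it amounts to a nonvanishing quadratic form in the functional-response curvature terms, generically satisfied.

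Finally, invoking Sotomayor's theorem \cite{Perko13} with these three conditions verified yields that system \eqref{EquationMain} undergoes a saddle-node bifurcation at $E_2$ when $a_1 = a_1^*$. The main obstacle, as in the companion theorems, is the second-derivative condition $V^T[D^2 F(X,a_1^*)(U,U)] \neq 0$: this is not automatic and, strictly speaking, should be expressed as an explicit inequality on the parameters $(a_1^*, b_1, w_0, w_1, d, m_1, m_2, x_1^*, x_2^*)$; I would either carry out that computation explicitly or state it as a genericity hypothesis, exactly as the paper does for Theorem \ref{saddle_node_a2}. The remaining steps are routine substitutions.
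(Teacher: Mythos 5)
Your proposal follows exactly the route the paper takes: the paper's own proof of Theorem \ref{saddle_node_a1} is just the remark that it is ``similar to the proof of Theorem \ref{saddle_node_a2}'' via Sotomayor's theorem, which is precisely what you carry out, and you correctly supply the detail the paper omits, namely that $F_{a_1}(X,a_1^*)=(x_1^*,0)^T$ so the first transversality condition now rests on $v_1\neq 0$, i.e. $a_{21}^*\neq 0$, which holds since $a_{21}^*>0$. Your caveat that the second condition $V^T[D^2F(X,a_1^*)(U,U)]\neq 0$ is asserted rather than computed applies equally to the paper's treatment of Theorem \ref{saddle_node_a2}, so the proposal is correct and matches the paper's argument.
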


\begin{proof}
The proof of Theorem \ref{saddle_node_w1}, Theorem \ref{saddle_node_w0}, Theorem \ref{saddle_node_b1} and Theorem \ref{saddle_node_a1}   are similar to proof in Theorem \ref{saddle_node_a2} and omitted for brevity.
\end{proof}

\subsection{Hopf-Bifurcation Analysis}

We investigate the possibility of Hopf-bifurcation of the positive interior equilibrium $E_2$ by using the per capita rate of self-reproduction for the prey, $a_1$ as a bifurcation parameter. Then, the characteristic equation corresponding to model \eqref{EquationMain} at $E_2$ is given by
\begin{align}\label{character_bifur_main}
\lambda^2 + A(a_1)\lambda + B(a_1) = 0,
\end{align}
where
$A = - \operatorname{tr} \,({\bf{J^*}}) = - (a_{11} + a_{22})$ and
$B =  \det \,({\bf{J^*}}) =  a_{11}  a_{22}- a_{12}a_{21}.$

The instability of model \eqref{EquationMain} is demonstrated via the following theorem by considering $a_1$ as a bifurcation parameter.

\begin{theorem}[Hopf-Bifurcation Theorem \cite{Murray93}]\label{Murray_birf_def}
If $A(a_1)$ and $B(a_1)$ are the smooth functions of $a_1$ in an open interval about $a_1^*\in \mathbb{R}$ such that the characteristic equation \eqref{character_bifur_main} has a pair of imaginary eigenvalues $\lambda=\zeta (a_1)\pm i \gamma (a_1)$ with $\zeta$ and $\gamma$ $\in\mathbb{R}$ so that they become purely imaginary at $a_1=a_1^*$ and $\frac{d\zeta}{d a_1}|_{a_1=a_1^*}\neq 0$, then a Hopf-bifurcation occurs around $E_2(x_1^*,x_2^*)$ at $a_1=a_1^*$ (i.e. a stability changes of $E_2(x_1^*,x_2^*)$ accompanied by the creation of a limit cycle at $a_1=a_1^*$).
\end{theorem}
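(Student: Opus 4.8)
The plan is to recognize Theorem~\ref{Murray_birf_def} as the classical planar Poincar\'e--Andronov--Hopf bifurcation theorem and to prove it through the standard analysis of a Poincar\'e first-return map, after checking that the two hypotheses listed are exactly what that argument needs. First I would translate the interior equilibrium $E_2(x_1^*,x_2^*)$ to the origin and express the linearization of \eqref{EquationMain} there in terms of the roots of \eqref{character_bifur_main}. The hypothesis that these roots are purely imaginary at $a_1=a_1^*$ forces $A(a_1^*)=0$ and $B(a_1^*)>0$, so near $a_1^*$ the roots form a genuine complex pair $\zeta(a_1)\pm i\gamma(a_1)$ with $\zeta(a_1)=-\tfrac12 A(a_1)$ and $\gamma(a_1)=\tfrac12\sqrt{4B(a_1)-A(a_1)^2}$ bounded away from $0$. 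A smooth $a_1$-dependent linear change of coordinates then brings the linear part at the origin to $\begin{pmatrix}\zeta(a_1)&-\gamma(a_1)\\\gamma(a_1)&\zeta(a_1)\end{pmatrix}$.

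Next I would pass to polar coordinates $(r,\theta)$. For $r$ small and $a_1$ near $a_1^*$ one has $\dot\theta=\gamma(a_1)+O(r)>0$, so $\theta$ is a legitimate independent variable, and following the trajectory through $(r_0,0)$ until $\theta$ advances by $2\pi$ defines a smooth first-return map $P_{a_1}$ on the ray $\theta=0$ with $P_{a_1}(0)=0$; hence $P_{a_1}(r_0)=r_0\,Q(r_0,a_1)$ with $Q$ smooth and $Q(0,a_1)=e^{2\pi\zeta(a_1)/\gamma(a_1)}$, the derivative of the return map at the equilibrium. A small periodic orbit corresponds to a positive solution of $Q(r_0,a_1)=1$. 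Since $Q(0,a_1^*)-1=0$ and $\left.\frac{\partial}{\partial a_1}\big(Q(r_0,a_1)-1\big)\right|_{(0,a_1^*)}=\frac{2\pi\,\zeta'(a_1^*)}{\gamma(a_1^*)}\neq 0$ by the transversality hypothesis $\tfrac{d\zeta}{da_1}\big|_{a_1^*}\neq 0$ (using $\zeta(a_1^*)=0$), the implicit function theorem produces a smooth branch $a_1=\rho(r_0)$, $\rho(0)=a_1^*$, solving $Q(r_0,\rho(r_0))=1$. For each small $r_0>0$ this yields a genuine non-equilibrium periodic orbit of \eqref{EquationMain} at parameter $\rho(r_0)$, and these orbits shrink to $E_2$ as $r_0\to 0^+$; since $\zeta$ crosses $0$ with nonzero speed at $a_1^*$, the equilibrium $E_2$ changes stability there, which is the asserted conclusion.

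The step I expect to be the main obstacle is the infrastructure behind the return map: showing that $P_{a_1}$ is well defined and smooth on a full one-sided neighbourhood of $r_0=0$, uniformly for $a_1$ near $a_1^*$ (this needs the uniform estimate $\dot\theta>0$ and a continuation argument for the flow), together with deriving $Q(0,a_1)=e^{2\pi\zeta(a_1)/\gamma(a_1)}$ from the variational equation along the origin. In a self-contained treatment this is where the real work sits; it is exactly the content of \cite{Murray93}, so I would cite it once the hypotheses have been matched. Two further points are needed to apply it to \eqref{EquationMain}: the vector field must be $C^\infty$ near $E_2$, which holds because $E_2$ lies in the open quadrant $x_1^*>0,\ x_2^*>0$ so the exponents $m_1,m_2\in(0,1]$ cause no loss of smoothness there; and $a_1\mapsto E_2(a_1)$, hence $A(a_1)$ and $B(a_1)$, must be smooth, which follows from the implicit function theorem since $\det{\bf J^*}=B(a_1^*)>0$. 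I would not attempt to decide whether the bifurcating cycle is sub- or supercritical, since the statement claims only existence of a limit cycle accompanying a change of stability; that finer question is the genuinely heavy computation, requiring reduction to the cubic normal form near $E_2$ and evaluation of the first Lyapunov coefficient.
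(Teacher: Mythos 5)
The paper does not prove Theorem \ref{Murray_birf_def} at all: it is quoted as the classical Hopf bifurcation theorem, attributed to \cite{Murray93}, and then used as a black box in the proof of Theorem \ref{bifurcationMain}. Your proposal instead reconstructs the standard proof of that classical result via a Poincar\'e first-return map: normalize the linear part using $\zeta(a_1)=-\tfrac12 A(a_1)$ and $\gamma(a_1)=\tfrac12\sqrt{4B(a_1)-A(a_1)^2}$, pass to polar coordinates, write $P_{a_1}(r_0)=r_0\,Q(r_0,a_1)$ with $Q(0,a_1)=e^{2\pi\zeta(a_1)/\gamma(a_1)}$, and apply the implicit function theorem to $Q=1$ using the transversality hypothesis. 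That is the correct and standard route, and your checklist for applying it to \eqref{EquationMain} (smoothness of the vector field at the interior equilibrium despite $m_1,m_2\in(0,1]$, since $x_1^*>0$, $x_2^*>0$, and smooth dependence of $E_2$, $A$, $B$ on $a_1$) is exactly the verification the paper leaves implicit. Two caveats: you delegate to \cite{Murray93} precisely the technical core (uniform well-definedness and smoothness of the return map and the derivation of $Q(0,a_1)$ from the variational equation), so what you have is a proof scheme rather than a self-contained proof --- which is acceptable here, since the paper itself offers only the citation; and, strictly speaking, the implicit-function-theorem branch produces small periodic orbits that need not be isolated (hence not literally ``limit cycles'') when the first Lyapunov coefficient vanishes --- but that looseness is already present in the theorem as quoted, not a defect you introduced, and your explicit decision not to compute the Lyapunov coefficient is consistent with the statement being proved.
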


\begin{theorem}\label{bifurcationMain}
The model \eqref{EquationMain} undergoes a Hopf-bifurcation around $E_2(x_1^*,x_2^*)$ when $a_1$ crosses some critical value of parameter  $a_1^*$, where
$$a_1^* = a_2 + 2b_1x_1^* - m_2w_1{x^*}_2^{m_2 - 1}\bigg({x_1^*\over  x_1^*+d}\bigg)^{m_1} + dm_1w_0{x^*}_2^{m_2}\bigg({{x_1^*}^{m_1-1}\over ( x_1^*+d)^{m_1+1}}\bigg) $$
provided:
 \begin{enumerate}[label=(\roman*)]
\item $A(a_1)=0 $,
\item $B(a_1)>0$,
\item $\dfrac{d}{da_1}\left. Re \lambda_i (a_1) \right|_{a_1=a_1^*}\neq 0$ at $a_1=a^*_1,~ i=1,2$.
\end{enumerate}
\end{theorem}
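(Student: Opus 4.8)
The plan is to verify the three hypotheses of the Hopf-Bifurcation Theorem (Theorem \ref{Murray_birf_def}) with $A(a_1) = -\operatorname{tr}({\bf J^*})$ and $B(a_1) = \det({\bf J^*})$. First I would observe that the candidate value $a_1^*$ is obtained precisely by setting $A(a_1^*) = 0$, i.e. $\operatorname{tr}({\bf J^*}) = a_{11} + a_{22} = 0$; solving $a_1 - a_2 - 2b_1 x_1^* + m_2 w_1 {x_2^*}^{m_2-1}\big({x_1^* \over x_1^*+d}\big)^{m_1} - d m_1 w_0 {x_2^*}^{m_2}\big({{x_1^*}^{m_1-1} \over (x_1^*+d)^{m_1+1}}\big) = 0$ for $a_1$ yields exactly the stated formula. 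This gives condition (i). For condition (ii), when $A(a_1^*) = 0$ the characteristic equation reduces to $\lambda^2 + B(a_1^*) = 0$, so the eigenvalues are $\pm i\sqrt{B(a_1^*)}$, which are a genuine pair of purely imaginary eigenvalues precisely when $B(a_1^*) = \det({\bf J^*}) > 0$; this is assumed as hypothesis (ii).

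Next I would address the transversality condition (iii). Writing the eigenvalues in a neighborhood of $a_1^*$ as $\lambda(a_1) = \zeta(a_1) \pm i\gamma(a_1)$, the relations from the characteristic equation give $2\zeta = -A(a_1)$ and $\zeta^2 + \gamma^2 = B(a_1)$. Hence $\zeta(a_1) = -\tfrac12 A(a_1)$, so $\frac{d\zeta}{da_1}\big|_{a_1 = a_1^*} = -\tfrac12 A'(a_1^*)$, and the transversality condition becomes simply $A'(a_1^*) \neq 0$. Now $A = -(a_{11} + a_{22})$; one must take care that $x_1^*$ and $x_2^*$ themselves depend on $a_1$ (since $x_2^* = \frac{w_1}{w_0 a_2}(a_1 x_1^* - b_1 {x_1^*}^2)$ while $x_1^*$ is pinned by the predator nullcline $w_1 g(x_1^*) = a_2$ when $m_2=1$, or more generally determined implicitly). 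In the case $m_2 = 1$, $x_1^*$ is independent of $a_1$, so only the explicit $a_1$ in $a_{11}$ and the $a_1$-dependence through $x_2^*$ contribute, and one computes $A'(a_1^*) = -1 + (\text{terms from } \partial x_2^*/\partial a_1)$; generically this is nonzero, and the theorem simply posits (iii) as a hypothesis to be checked. I would then invoke Theorem \ref{Murray_birf_def} to conclude that a Hopf bifurcation occurs at $a_1 = a_1^*$, with the accompanying creation of a limit cycle.

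The main obstacle is the bookkeeping in the transversality computation: because the equilibrium coordinates $(x_1^*, x_2^*)$ vary with the bifurcation parameter $a_1$ (except for $x_1^*$ in the sub-case $m_2 = 1$), differentiating $A(a_1) = -(a_{11}+a_{22})$ rigorously requires the chain rule through $x_1^*(a_1)$ and $x_2^*(a_1)$, obtained by implicit differentiation of the steady-state equations \eqref{equilibrium1}--\eqref{equilibrium2}. Since the theorem statement already carries (iii) as an explicit hypothesis, I would not attempt to establish $A'(a_1^*)\neq 0$ in full generality; rather, I would show that (i) and (ii) reduce to the stated formula and the sign condition on $B$, note that under (iii) the eigenvalues cross the imaginary axis with nonzero speed, and appeal to Theorem \ref{Murray_birf_def}. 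A brief remark that in the non-smooth regime $0 < m_1 < 1$ the interior equilibrium still lies in $\{x_1^* > 0\}$ where $g$ is smooth, so the Jacobian and all the differentiations above are legitimate, would complete the argument.
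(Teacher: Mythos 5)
Your proposal is correct and follows the same skeleton as the paper: verify hypotheses (i)--(iii) of the cited Hopf-bifurcation theorem (Theorem \ref{Murray_birf_def}), with $a_1^*$ read off from $\operatorname{tr}\,({\bf J^*})=0$ and the purely imaginary pair $\pm i\sqrt{B(a_1^*)}$ supplied by $B(a_1^*)>0$. Where you genuinely differ is the transversality step. The paper substitutes $\lambda=\zeta+i\gamma$ into the characteristic equation, differentiates in $a_1$, separates real and imaginary parts into a linear system in $(\dot\zeta,\dot\gamma)$ with coefficients $Z_1,\dots,Z_4$, solves to get $\dot\zeta(a_1^*)=-\left[Z_1Z_3+Z_2Z_4\right]/\left(Z_1^2+Z_2^2\right)$, and then asserts this is nonzero; you instead invoke the Vieta relation $2\zeta(a_1)=-A(a_1)$ (valid near $a_1^*$ since $A^2-4B<0$ there), so transversality is exactly $A'(a_1^*)\neq 0$. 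The two are consistent: at $a_1^*$ one has $\zeta=0$, $Z_1=A(a_1^*)=0$, $Z_2=2\gamma$, $Z_3=\dot B$, $Z_4=\dot A\,\gamma$, and the paper's expression collapses to $-\dot A(a_1^*)/2$, i.e.\ your formula. Your version is more economical, and it is also more candid on two points the paper glosses over: first, that condition (iii) is an assumed hypothesis rather than something ``easy to verify,'' since checking $A'(a_1^*)\neq 0$ in general requires the chain rule through the $a_1$-dependence of $(x_1^*,x_2^*)$ (with the simplification that $x_1^*$ is pinned by the predator nullcline when $m_2=1$); and second, your closing remark that the interior equilibrium lies in $\{x_1^*>0\}$ where $g$ is smooth, so the linearization is legitimate even in the sub-critical regime $0<m_1<1$, addresses a point the paper leaves implicit.
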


\begin{proof}
Clearly $A(a_1)$ and $B(a_1)$ are the smooth functions of $a_1$. The roots of the equation \eqref{character_bifur_main} are of the form $\lambda_1=\zeta (a_1) + i \gamma (a_1)$ and $\lambda_2=\zeta (a_1) - i \gamma (a_1)$ where $\zeta (a_1)$ and $\gamma (a_1)$ are real functions.

At $a_1=a_1^*$, the characteristic equation \eqref{character_bifur_main} reduces to
 \begin{align}\label{charac_reduced_bifur_main}
 \lambda^2 + B(a_1)=0
 \end{align}
 By solving for the roots of equation \eqref{charac_reduced_bifur_main}, we obtain $\lambda_1=i\sqrt{B}$ and
 $\lambda_2=-i\sqrt{B}$. Hence a pair of purely imaginary eigenvalues. Furthermore, we validate  the transversality condition:
 $$\dfrac{d}{da_1}Re \lambda_i (a_1) |_{a_1=a_1^*}\neq 0, i=1,2.$$
Substituting $\lambda (a_1) = \zeta (a_1) + i \gamma (a_1)$ into equation \eqref{character_bifur_main}, we obtain
 \begin{align}
 (\zeta (a_1) + i \gamma (a_1))^2 + A(a_1) (\zeta (a_1) + i \gamma (a_1)) + B(a_1)=0.
 \end{align}
Now, taking the derivative with respect to $a_1$, we get
 \begin{align*}
 2(\zeta (a_1) + i \gamma (a_1)) (\dot{\zeta} (a_1) + i \dot{\gamma} (a_1)) + A(a_1) (\dot{\zeta} (a_1) + i \dot{\gamma} (a_1))  \\
 + \dot{A} (a_1) (\zeta (a_1) + i \gamma (a_1)) + \dot{B} (a_1) = 0.
 \end{align*}
Separating the real and imaginary parts, we have
 \begin{align*}
 \dot{\zeta}(a_1) (2 \zeta (a_1) + A (a_1)) + \dot{\gamma (a_1)} (-2 \gamma (a_1)) + \dot{A} (a_1) \zeta (a_1) + \dot{B} (a_1) = 0,
 \end{align*}
which implies
  \begin{align}\label{hopf_cond1}
 \dot{\zeta} (a_1) Z_1 (a_1) - \dot{\gamma} (a_1) Z_2 (a_1) + Z_3 (a_1) = 0,
 \end{align}
and
  \begin{align*}
 \dot{\zeta}(a_1) (2 \gamma (a_1)) + \dot{\gamma} (a_1) (2 \zeta (a_1) + A (a_1)) + \dot{A} (a_1) \gamma (a_1) =0,
 \end{align*}
which implies
  \begin{align}\label{hopf_cond2}
 \dot{\zeta} (a_1) Z_2 (a_1) + \dot{\gamma} (a_1) Z_1 (a_1) + Z_4 (a_1) = 0,
 \end{align}
where $Z_1 (a_1)= 2 \zeta (a_1) + A (a_1)$, $Z_2 (a_1)= 2 \gamma (a_1)$, $Z_3 (a_1)=  \dot{A} (a_1) \zeta (a_1) + \dot{B} (a_1) $ and $Z_4 (a_1)=\dot{A} (a_1) \gamma (a_1) $.

Multiplying equation \eqref{hopf_cond1} by $Z_1 (a_1)$ and equation \eqref{hopf_cond2} by $Z_2 (a_1)$ and then adding them, we obtain
 \begin{align}\label{hopf_cond3}
 (Z_1^2 (a_1) + Z_2^2 (a_1)) \dot{\zeta} (a_1) + Z_1 (a_1) Z_3 (a_1) + Z_2 (a_1) Z_4 (a_1) =0,
 \end{align}
thus solving for $\dot{\zeta} (a_1)$ from equation \eqref{hopf_cond3} and at $a_1=a_1^*$,
\begin{align*}
\dfrac{d}{da_1}Re \lambda_i (a_1) |_{a_1=a_1^*} = \dot{\zeta} (a_1^*)=\dfrac{-\left[ Z_1 (a_1^*) Z_3 (a_1^*) + Z_2 (a_1^*) Z_4 (a_1^*) \right]}{Z_1^2 (a_1^*) + Z_2^2 (a_1^*)}.
 \end{align*}
It is easy to verify that $Z_1 (a_1^*) Z_3 (a_1^*) + Z_2 (a_1^*) Z_4 (a_1^*) \neq 0$ and $Z_1^2 (a_1^*) + Z_2^2 (a_1^*) \neq 0$ which implies  $\frac{d}{da_1}Re \lambda_i (a_1) |_{a_1=a_1^*} \neq 0$. Hence, a Hopf-bifurcation occurs around $E_2(x_1^*,x_2^*)$ at $a_1=a_1^*$.

\end{proof}

\section{Finite Time Extinction}\label{section:Finite_time_extinction}

An interesting property of \eqref{EquationMain} is that the population of prey may go extinct in finite time for carefully chosen initial conditions. Therefore, while solutions do remain nonnegative, it is clear that the populations may become extinct and no longer persist. %ci{\color{blue}Revisit positivity of solutions to add remark here to avoid conflict of a contradiction. MAB}

\begin{theorem}\label{FiniteTimeTheorem}
Consider the predator-prey system given by \eqref{EquationMain}. The solution $x_1(t)$  to the prey equation $x_1(t)$ with initial conditions $x_1(0)>0,~x_2(0)>0$ can go extinct in finite time, for sufficiently chosen initial conditions.
\end{theorem}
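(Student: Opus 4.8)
The plan is to exploit the non-smoothness of the functional response $g$ at the origin that occurs in the sub-critical regime $0<m_1<1$; this is exactly what assumptions (V) and (VII) encode. Near $x_1=0$ the predation term $w_0 g(x_1)x_2^{m_2}$ overwhelms the logistic part $a_1x_1-b_1x_1^2$ of the prey equation, because (V) gives $g(x_1)/x_1\to+\infty$ as $x_1\to 0^+$. Hence, along a solution for which the predator density stays bounded below by a positive constant, the prey equation is dominated by a scalar differential inequality $\dot x_1\le -C\,g(x_1)$, and by (VII) the improper integral $\int_0^{\xi}dr/g(r)$ is finite --- precisely the condition under which the comparison equation $\dot u=-C\,g(u)$ reaches $0$ in finite time. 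The initial data will be chosen with $x_2(0)=\nu>0$ arbitrary and $x_1(0)=\xi$ sufficiently small.

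The steps, in order. First, from the predator equation and Lemma \ref{positivity} we get $\dot x_2\ge -a_2x_2$, hence $x_2(t)\ge \nu e^{-a_2 t}\ge \kappa:=\nu e^{-a_2}>0$ for $t\in[0,1]$; fix $C:=\tfrac{1}{2} w_0\kappa^{m_2}$. Second, by (V) there is $\eta>0$ with $g(x_1)\ge \tfrac{2a_1}{w_0\kappa^{m_2}}\,x_1$ for $0<x_1\le\eta$, so that whenever $0<x_1(t)\le\eta$ and $t\in[0,1]$,
\[
\dot x_1=a_1x_1-b_1x_1^2-w_0g(x_1)x_2^{m_2}\le a_1x_1-w_0\kappa^{m_2}g(x_1)\le -\tfrac{1}{2} w_0\kappa^{m_2}g(x_1)=-C\,g(x_1).
\]
Third, choose $\xi\in(0,\eta]$ small enough that $I(\xi):=\int_0^{\xi}\frac{dr}{g(r)}\le C$ (possible since $I(\xi)\to0$ as $\xi\to0^+$, by (VII)), and set $x_1(0)=\xi$; since the right-hand side above is strictly negative for $x_1>0$, $x_1(t)$ is strictly decreasing while positive, remains in $(0,\xi]\subseteq(0,\eta]$, and --- being bounded (Lemma \ref{boundedness}) with $g$ smooth for $x_1>0$ by (II) --- the solution persists until $x_1$ first vanishes. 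Fourth, put $\Phi(s)=\int_s^{\xi}\frac{dr}{g(r)}$, which by (VII) is a continuous strictly decreasing bijection $\Phi:[0,\xi]\to[0,I(\xi)]$ with $\Phi(\xi)=0$; while $x_1(t)>0$, the chain rule and the displayed inequality give $\tfrac{d}{dt}\Phi(x_1(t))=-\dot x_1/g(x_1)\ge C$, so $\Phi(x_1(t))\ge Ct$, whereas $\Phi(x_1(t))\le I(\xi)\le C$. Therefore $x_1(t)>0$ forces $t\le I(\xi)/C\le 1$, so $x_1$ cannot remain positive; by continuity and monotonicity it vanishes at some finite $t^*\le I(\xi)/C$, i.e. the prey goes extinct in finite time.

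The main obstacle is the apparent circularity: the decay scale $C$ depends on a positive lower bound for $x_2$, which itself seems to depend on the time horizon over which the decay estimate must hold. It is resolved by committing to the fixed window $[0,1]$ at the outset --- which pins down $\kappa$, $C$ and $\eta$ independently of $\xi$ --- and only then shrinking $\xi$ until $I(\xi)/C\le 1$. A minor point: solutions of \eqref{EquationMain} need not be unique when $m_1<1$, but this is immaterial here, since $x_1(0)=\xi>0$ lies where the vector field is locally Lipschitz, so the solution is unique and well defined down to $x_1=0$, and any branching can occur only at or after extinction.
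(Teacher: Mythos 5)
Your argument is correct, and it takes a genuinely different route from the paper's. The paper proves Theorem \ref{FiniteTimeTheorem} by the substitution $x_1=1/u$, bounding $x_2$ below by $x_2(0)e^{-a_2t}$, and comparing $u$ with the scalar equation $\frac{d\tilde u}{dt}=-a_1\tilde u+w_0\,\tilde u^{2}/(1+d\tilde u)^{m_1}$, whose superlinear growth (effective exponent $2-m_1>1$) forces finite-time blow-up of $u$, hence $x_1\to 0$ at a finite time; this requires taking $x_2(0)\gg 1$ so that $(x_2(0)e^{-a_2t})^{m_2}>1$ on the blow-up window, together with the smallness condition \eqref{d1} on $x_1(0)$. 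You instead stay with $x_1$ itself and run an Osgood-type criterion: a positive lower bound $x_2\ge\kappa$ on a fixed window $[0,1]$, assumption (V) to absorb the logistic term near $x_1=0$ so that $\dot x_1\le -C\,g(x_1)$, and assumption (VII) to conclude through $\Phi(s)=\int_s^{\xi}dr/g(r)$ that $x_1$ reaches zero by time $I(\xi)/C$. What your route buys: it works for an arbitrary $x_2(0)>0$ (only $x_1(0)$ must be shrunk), it yields an explicit upper bound on the extinction time, it applies verbatim to any response $g$ satisfying (V) and (VII) rather than the specific form \eqref{functions}, and it makes the role of the sub-critical regime explicit — (VII) holds precisely when $m_1<1$, which the paper's comparison also needs implicitly, since for $m_1=1$ the comparison equation grows only linearly and does not blow up, and the borderline equality case allowed in \eqref{d1} is actually an equilibrium of the comparison equation, so strict inequality is what is really wanted there. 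The paper's route is shorter and ties the mechanism to a large initial predator density, which is the ecological picture emphasized in the discussion; your fixed-window device cleanly removes the apparent circularity between the decay constant and the time horizon, and your remark that uniqueness at the starting point is irrelevant (the differential inequality applies to any solution branch) is accurate.
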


\begin{proof}

Consider the substitution $x_{1} = 1/u$ in the prey equation of \eqref{EquationMain}. This yields the following system:
\begin{equation}\label{Eu}
\left\{ \begin{array}{rl}
\dfrac{dx_{1} }{dt} = \dfrac{-1}{u^{2}} \dfrac{du }{dt}&=a_{1} \dfrac{1}{u} -b_{1}( \dfrac{1}{u})^{2} - w _{0} \left(\frac{\frac{1}{u} }{\frac{1}{u} +d} \right)^{m_{{\kern 1pt} 1}} x_{2}^{m_{{\kern 1pt} 2}} ,\\[2ex]
\dfrac{dx_{2} }{dt} &=-a_{2} x_{2} + w_{1} \left(\frac{\frac{1}{u} }{\frac{1}{u} +d} \right)^{m_{{\kern 1pt} 1}} x_{2}^{m_{{\kern 1pt} 2} }.
\end{array}\right.
\end{equation}
This system can be simplified into the system in $u, x_{2}$:
\begin{equation}\label{Eu1}
\begin{cases}
 \dfrac{du }{dt} &=-a_{1}u + b_{1} + w _{0} \dfrac{u^{2}}{(1+du)^{m_{1}}}  x_{2}^{m_{2} } ,\\[2ex]
\dfrac{dx_{2} }{dt} &=-a_{2} x_{2} + w_{1}  \dfrac{1}{(1+du)^{m_{1}}}  x_{2}^{m_{2} },
\end{cases}
\end{equation}
Note, via positivity
\begin{equation}
\frac{dx_{2} }{dt} \geq -a_{2} x_{2}.
\end{equation}
Thus,
\begin{equation}
x_{2}  \geq x_{2}(0)e^{-a_{2}t}.
\end{equation}
Also, via positivity we have the inequality,
\begin{equation}
 \frac{du }{dt} \geq -a_{1}u  + w _{0} \frac{u^{2}}{(1+du)^{m_{1}}}  x_{2}^{m_{2} } \geq -a_{1}u  + w _{0} \frac{u^{2}}{(1+du)^{m_{1}}}  (x_{2}(0)e^{-a_{2}t})^{m_{2} }.
\end{equation}
Note that the solution to the differential equation
\begin{equation}
 \frac{d \tilde{u} }{dt} \geq -a_{1}\tilde{u}  + w _{0} \frac{\tilde{u}^{2}}{(1+d\tilde{u})^{m_{1}}}
\end{equation}
will blow up in a finite time, $T^{*}(u_{0})<\infty$, as long as the initial data $\tilde{u}(0)=u_{0}$ satisfies,
\begin{equation}
\label{d1}
 a_{1}u_{0}(1+du_{0})^{m_{1}}  \leq w_{0} u_{0}^{2}.
\end{equation}
Now if we choose $x_{2}(0) \gg 1$, such that
\begin{equation}
(x_{2}(0)e^{-a_{2}t})^{m_{2} } >1, \ t \in [0, T^{*}],
\end{equation}
then $u \geq \tilde{u}$ on $[0, T^{*}]$, and must blow-up in finite time, at some $T^{**} < T^{*}$, by comparison, if $u_{0}$ is chosen to satisfy \eqref{d1} . Therefore,
\[ \lim_{t \rightarrow T^{**} < \infty} u \rightarrow \infty \]
which implies
\[ \lim_{t \rightarrow T^{**} < \infty} x_{1} = \lim_{t \rightarrow T^{**} < \infty} \frac{1}{u} = \frac{1}{ \lim_{t \rightarrow T^{**} < \infty} u} \rightarrow 0, \]
but that implies $x_{1}(t)$ goes extinct in finite time for $x_{2}(0)$ chosen large enough and $$(x_{1}(0))^{1-m_{1}}(x_{1}(0)+d)^{m_{1}} \leq \dfrac{w_{0}}{a_{0}}.$$

\end{proof}

\section{The effect of prey refuge}\label{section:Effect_of_prey_refuge}

In the previous section it was shown that the prey population may go extinct in finite time.  Therefore, we seek to investigate the effect of protecting the prey from predation with their habitat.  The aim is to provided avenues for which the prey population will persist.  Here, using a similar ideas from \cite{P16}, we introduce a prey refuge. A discussion of how a habitat controller may create a prey refuge is provided in section 7 of \cite{P16}.

Essentially, one must protect a constant proportion of prey by replacing the predation term $g(x_{1})$ by $g(r x_{1})$, where $0\leq r \leq 1$. Here, $r$ is a refuge parameter, such that if $r=0$ then complete protection of the prey is provided while $r=1$ implies no protection and the original system \eqref{GeneralEquation} is recovered. Thus, we write the following system that models prey refuge as
\begin{equation}\label{refuge}
\begin{cases}
\dfrac{dx_{1} }{dt} &=a_{1} x_{1} -b_{1} x_{1}^{2} - w _{0} \left(\dfrac{rx_{1} }{rx_{1} +d} \right)^{m_{{\kern 1pt} 1} } x_{2}^{m_{{\kern 1pt} 2} } ,\\[2ex]
\dfrac{dx_{2} }{dt} &=-a_{2} x_{2} + w_{1} \left(\dfrac{rx_{1} }{rx_{1} +d} \right)^{m_{{\kern 1pt} 1} } x_{2}^{m_{{\kern 1pt} 2} },
\end{cases}
\end{equation}
We now state our first result concerning prey refuge,

\begin{theorem}
\label{thm:ref1}
Consider the predator-prey system given by \eqref{refuge} for $r=1$, that is no refuge. For any initial conditions $\frac{a_{1}}{b_{1}} > x_1(0)>0,~x_2(0)>0$ s.t. the solution $x_1(t)$ to the prey equation goes extinct in finite time, there exists a refuge $r^{*}(x_{1}(0), a_{1}, b_{1}, d, w_{0}, m_{1}, m_{2}, K_{2}) > 0$ s.t. for any $r < r^{*}$ the solution $x_1(t)$ to the prey equation does not go extinct in finite time.
\end{theorem}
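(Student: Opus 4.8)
The plan is to prove something slightly stronger than the stated conclusion: for $r$ small enough the prey population stays uniformly bounded below away from $0$ for all $t\ge 0$, so in particular it cannot vanish in finite time. The mechanism behind Theorem \ref{FiniteTimeTheorem} is the predation term, and the key observation is that inserting a refuge multiplies the effective predation pressure by a factor $\sim r^{m_1}$ which tends to $0$ as $r\to 0$, so that the prey equation in \eqref{refuge} reduces to a perturbed logistic equation whose dynamics are transparent.

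First I would record a priori bounds. Positivity and boundedness (Lemmas~\ref{positivity}, \ref{boundedness} and \ref{dissipative}) carry over to \eqref{refuge} verbatim, since replacing $g(x_1)$ by $g(rx_1)$ only shrinks the nonnegative predation term; in particular the chain of estimates in the proof of Lemma~\ref{boundedness} is unchanged. From $\dot x_1\le a_1x_1-b_1x_1^2$ together with the hypothesis $x_1(0)<a_1/b_1$ one gets $0<x_1(t)\le a_1/b_1=:M_1$ on the whole existence interval, and from the time-uniform bound on $Q=x_1+x_2$ in the proof of Lemma~\ref{boundedness} one gets $x_2(t)\le M_2$ for all $t\ge 0$, with $M_2$ controlled by $K_2$ and the initial data. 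Crucially these are pointwise bounds, valid from $t=0$, hence up to any putative extinction time.

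Next, using $rx_1+d\ge d$ together with the bounds above,
\[
w_0\left(\frac{rx_1}{rx_1+d}\right)^{m_1}x_2^{m_2}\ \le\ w_0\left(\frac{rx_1}{d}\right)^{m_1}M_2^{m_2}\ \le\ \frac{w_0\,M_1^{m_1}M_2^{m_2}}{d^{m_1}}\;r^{m_1}\ =:\ \epsilon(r),
\]
so $\dot x_1\ \ge\ a_1x_1-b_1x_1^2-\epsilon(r)=:h(x_1)$ with $\epsilon(r)\to 0$ as $r\to 0^+$. When $\epsilon(r)<a_1^2/(4b_1)$ the scalar equation $\dot z=h(z)$ has two positive equilibria $z_-(\epsilon)<z_+(\epsilon)$ — a repeller and an attractor — with $z_-(\epsilon)\to 0$ and $z_+(\epsilon)\to a_1/b_1$ as $\epsilon\to0$. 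Since $x_1(0)>0$ is a fixed number, one may choose $r^*>0$ so small that both $\epsilon(r^*)<a_1^2/(4b_1)$ and $z_-(\epsilon(r^*))<x_1(0)$. Then for every $r<r^*$ the solution $z$ of $\dot z=h(z)$, $z(0)=x_1(0)$, satisfies $z(t)\ge\min\{x_1(0),z_+(\epsilon(r))\}>0$ for all $t\ge 0$ by a one–line phase–line argument, and a differential–inequality comparison ($h$ is Lipschitz on $[0,M_1]$) gives $x_1(t)\ge z(t)>0$ for all $t\ge0$. Hence $x_1$ does not go extinct in finite time; tracking the constants shows $r^*$ depends only on $x_1(0),a_1,b_1,d,w_0,m_1,m_2$ and $K_2$, as claimed.

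The only genuine obstacle is the point flagged in the second paragraph: one must make sure the upper bounds on $x_1$ and $x_2$ hold uniformly in $t$ up to the (a priori unknown) would–be extinction time, rather than merely asymptotically — this is why the argument uses the pointwise bound $x_1(t)\le a_1/b_1$ (valid from $t=0$ thanks to $x_1(0)<a_1/b_1$) and the explicit time–uniform estimate on $Q=x_1+x_2$, not the $\limsup$ statements of Lemma~\ref{dissipative}. Everything after that is routine phase–line analysis of $\dot z=a_1z-b_1z^2-\epsilon$.
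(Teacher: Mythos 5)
Your proposal is correct, and it reaches the conclusion by a genuinely different route than the paper. The paper's proof mirrors its finite-time-extinction argument (Theorem \ref{FiniteTimeTheorem}): it substitutes $x_{1}=1/u$ in \eqref{refuge}, bounds $\dot u$ above by the autonomous scalar ODE $\dot u=-a_{1}u+b_{1}+\tfrac{w_{0}r^{m_{1}}}{d^{m_{1}}}u^{2-m_{1}}K_{2}^{m_{2}}$, shifts $u=\tfrac{b_{1}}{a_{1}}+v$, and chooses $r$ small enough that the right-hand side is nonpositive at $v(0)$, so the comparison solution cannot blow up and hence $x_{1}=1/u$ cannot reach zero in finite time; the hypothesis $x_{1}(0)<a_{1}/b_{1}$ enters there as $v(0)>0$. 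You instead stay in the original variable: using $rx_{1}+d\ge d$ and the pointwise bounds $x_{1}\le a_{1}/b_{1}$, $x_{2}\le M_{2}$, you dominate the predation term by $\epsilon(r)\propto r^{m_{1}}$ and compare with the perturbed logistic $\dot z=a_{1}z-b_{1}z^{2}-\epsilon$, whose phase line yields a uniform positive lower bound once $\epsilon(r)<a_{1}^{2}/(4b_{1})$ and $z_{-}(\epsilon(r))<x_{1}(0)$ (both monotone in $r$, so the choice of $r^{*}$ is clean). Both arguments hinge on the same two ingredients — the $r^{m_{1}}$ scaling of the predation pressure and $x_{1}(0)<a_{1}/b_{1}$ — but yours avoids the singular substitution, gives a strictly stronger conclusion (a uniform-in-time positive lower bound, i.e.\ persistence, rather than merely no finite-time extinction), and is more careful on a point the paper glosses over: the bound on $x_{2}$ must hold pointwise up to the putative extinction time, whereas $K_{2}$ in Lemma \ref{dissipative} is only a $\limsup$ bound. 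The one small inconsistency is that your time-uniform $M_{2}$ (from the explicit estimate on $Q$) depends on $x_{2}(0)$ as well, so strictly your $r^{*}$ inherits that dependence unless one reads ``$K_{2}$'' in the theorem as this pointwise bound — but this is exactly the same gloss the paper's own proof makes, so it is not a genuine gap.
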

\begin{proof}

The proof follows in similar fashion to the proof of Theorem \eqref{FiniteTimeTheorem}.  Hence, we make the substitution $x_{1} = 1/u$ in the prey equation in \eqref{refuge}.  This results in:
\begin{equation}\label{Eu113}
\begin{cases}
 \dfrac{du }{dt} &=-a_{1}u + b_{1} + w _{0}r^{m_{1}} \dfrac{u^{2}}{(r+du)^{m_{1}}}  x_{2}^{m_{2} } ,\\[2ex]
\dfrac{dx_{2} }{dt} &=-a_{2} x_{2} + w_{1}  \dfrac{r^{m_{1}}}{(r+du)^{m_{1}}}  x_{2}^{m_{2} },
\end{cases}
\end{equation}

Via comparison we have
\begin{eqnarray}
&& \frac{du }{dt} \nonumber \\
&=& -a_{1}u + b_{1} + w _{0}r^{m_{1}} \frac{u^{2}}{(r+du)^{m_{1}}}  x_{2}^{m_{2} }  \nonumber \\
&\leq& -a_{1}u + b_{1} +\frac{w _{0}r^{m_{1}}}{d^{m_{1}}} u^{2-m_{1}}x_{2}^{m_{2}}  \nonumber \\
&\leq& -a_{1}u + b_{1} +\frac{w _{0}r^{m_{1}}}{d^{m_{1}}} u^{2-m_{1}}K_{2}^{m_{2}}
 \end{eqnarray}
Recall that $x_{1}$ and $x_{2}$ are bounded above.  Let the upper bounds for $x_1$ and $x_2$ be constants $K_1$ and $K_2$, respectively. Now, we desire that $\frac{du }{dt} \leq 0$, $\forall t$, as this will ensure that $u$ cannot blow-up in finite time, or $x_{1}$ cannot go extinct in finite time.
lets compare to
\begin{equation}
\label{eq:n12}
 \frac{du }{dt}  =  -a_{1}u + b_{1} +\frac{w _{0}r^{m_{1}}}{d^{m_{1}}} u^{2-m_{1}}K_{2}^{m_{2}}
\end{equation}

let us set $u = \dfrac{b_{1}}{a_{1}} + v$, this changes \eqref{eq:n12} to

\begin{equation}
\label{eq:n121}
 \frac{dv }{dt}  =  -a_{1}v  + \frac{w _{0}r^{m_{1}}}{d^{m_{1}}} (\frac{b_{1}}{a_{1}} + v)^{2-m_{1}}K_{2}^{m_{2}}
\end{equation}

Now, for $\dfrac{du }{dt} = \dfrac{du }{dt}  \leq 0$ one requires that
\begin{equation}
-a_{1}v  + \frac{w _{0}r^{m_{1}}}{d^{m_{1}}} (\frac{b_{1}}{a_{1}} + v)^{2-m_{1}}K_{2}^{m_{2}} \leq 0
\end{equation}
This is possible if we choose $r$ such that,
  \begin{equation}
r <\left( \frac{ (a_1 d^{m_{1}}) v(0)}{w_{0} (\frac{b_{1}}{a_{1}} + v(0))^{2-m_{1}}K_{2}^{m_{2}}} \right)^{\frac{1}{m_{1}}}.
 \end{equation}

 This is seen simply by looking in the phase for the $v$ equation, and shows, $v$ cant blow up in finite time. Thus neither can $u = \frac{b_{1}}{a_{1}} + v$, thus making the finite time extinction of $x_{1}$ an impossibility.
\end{proof}

\subsection{Existence of Equilibria and  Stability Analysis}~\\
Similar to Section 3, we investigate and analyze the equilibrium solutions of our mathematical model with prey refuge.  Consider the steady state equations of \eqref{refuge}:
\begin{align}
a_{1} x_{1} -b_{1} x_{1}^{2} - w _{0} \left(\frac{rx_{1} }{rx_{1} +d} \right)^{m_{{\kern 1pt} 1} } x_{2}^{m_{{\kern 1pt} 2} } &= 0  \label{requilibrium1} \\
-a_{2} x_{2} + w_{1} \left(\frac{rx_{1} }{rx_{1} +d} \right)^{m_{{\kern 1pt} 1}} x_{2}^{m_{{\kern 1pt} 2}} &= 0   \label{requilibrium2}
\end{align}
The above equations \eqref{requilibrium1} and \eqref{requilibrium2}  have three types of non-negative equilibria:

\begin{enumerate}[label=(\roman*)]
 \item The trivial equilibrium $E_0 (0,0)$.
 \item The predator-free equilibrium $E_1 (a_1/b_1,0)$.
 \item The interior equilibrium $E_2(x_1^*, x_2^*)$ where $x_1^*$ and $x_2^*$ are related by

 $$x_2^*=\dfrac{w_1}{w_0 a_2}\left[a_1 x_1^* - b_1 x_1^{*2}  \right] .$$
 \end{enumerate}

The variational matrix $\bf{J_r^*}$ of the model \eqref{refuge} around the interior equilibrium
$E_2(x_1^*, x_2^*)$ is
\begin{align*}
 \bf{J_r^*}=  \begin{bmatrix}
      b_{11}&   b_{12}\\
      b_{21}  &  b_{22}
     \end{bmatrix}.
\end{align*}
where
\begin{eqnarray*}
b_{11}&=&   a_1 - 2b_1x_1^*- m_1 w_0 {x_2^*}^{m_2}\left[ \frac{r}{d+r x_1^*}-\frac{r^2 x_1^*}{(r x_1^*+d)^2}  \right]\left(\frac{r x_1^*}{r x_1^*+d} \right)^{m_1-1},\\
b_{12}&=&  - {m_2 w_0 {x_2^*}^{m_2-1}{(r x_1^*)}^{m_1} \over (r x_1^*+d)^{m_1}},\\
b_{21}&=&  m_1dw_1 r^{m_1} {x_2^*}^{m_2}\bigg({{x_1^*}^{m_1-1}\over (rx_1^*+d)^{m_1+1}}\bigg) ,\\
b_{22}&=& -a_2+ m_2 w_1{ x_2^*}^{m_2-1}\bigg( {rx_1^*\over {rx_1^*+d}}\bigg)^{m_1}.
\end{eqnarray*}

The characteristic equation corresponding to ${\bf J_r^*}$ is given by
 $$\lambda^2 - \operatorname{tr} \,({\bf{J_r^*}}) \lambda + \det \,({\bf{J_r^*}}) =0,$$
where
\begin{eqnarray*}
\operatorname{tr} \,({\bf{J_r^*}}) &=& b_{11}+b_{22}\\
& = &  a_1  - a_2 -  2b_1x_1^*- m_1 w_0 {x_2^*}^{m_2}\left[ \frac{r}{d+r x_1^*}-\frac{r^2 x_1^*}{(r x_1^*+d)^2}  \right]\left(\frac{r x_1^*}{r x_1^*+d} \right)^{m_1-1} \\ & & +~ m_2 w_1{ x_2^*}^{m_2-1}\bigg( {rx_1^*\over {rx_1^*+d}}\bigg)^{m_1},
\end{eqnarray*}
and
\begin{eqnarray*}
\det \,({\bf{J_r^*}})&=& b_{11} b_{22}- b_{12} b_{21} \\
 &=& \left( a_1 - 2b_1x_1^*- m_1 w_0 {x_2^*}^{m_2}\left[ \frac{r}{d+r x_1^*}-\frac{r^2 x_1^*}{(r x_1^*+d)^2}  \right]\left(\frac{r x_1^*}{r x_1^*+d} \right)^{m_1-1} \right) \\
&& \times \left( -a_2+ m_2 w_1{ x_2^*}^{m_2-1}\bigg( {rx_1^*\over {rx_1^*+d}}\bigg)^{m_1} \right) \\
&& - \left(  - {m_2 w_0 {x_2^*}^{m_2-1}{(r x_1^*)}^{m_1} \over (r x_1^*+d)^{m_1}} \right)    \left(  m_1dw_1 r^{m_1} {x_2^*}^{m_2}\bigg({{x_1^*}^{m_1-1}\over (rx_1^*+d)^{m_1+1}}\bigg)   \right).
\end{eqnarray*}
Here, $\operatorname{tr} \,({\bf{J_r^*}})$ and $\det \,({\bf{J_r^*}}) $ represents the trace and determinant of the variational matrix.  Hence the stability of $E_2(x_1^*, x_2^*)$ is determined by the sign of $\det \,({\bf{J_r^*}}) $ and $\operatorname{tr} \,({\bf{J_r^*}})$.

The above results are summarized in the following theorem.

\begin{theorem}\label{localstabintrefuge}
The interior equilibrium $E_2(x_1^*, x_2^*)$ of system \eqref{refuge} is locally asymptotically stable if  $\operatorname{tr} \,({\bf{J_r^*}}) < 0$ and $\det \,({\bf{J_r^*}})  > 0$ by Routh-Hurwitz stability criteria.
\end{theorem}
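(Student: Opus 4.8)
The plan is to apply the classical Routh--Hurwitz criterion to the characteristic polynomial of $\mathbf{J_r^*}$. First I would observe that since the interior equilibrium has $x_1^* > 0$, assumption (II) guarantees that the response $g(rx_1) = \left(\frac{rx_1}{rx_1+d}\right)^{m_1}$ is $C^\infty$ in a neighborhood of $x_1^*$ even in the sub-critical regime $0 < m_1 < 1$ — the loss of smoothness flagged in assumptions (V)--(VI) occurs only at $x_1 = 0$. Hence the right-hand side of \eqref{refuge} is continuously differentiable near $E_2$, the linearization $\mathbf{J_r^*}$ with the displayed entries $b_{11}, b_{12}, b_{21}, b_{22}$ is legitimate, and by the Hartman--Grobman theorem the local asymptotic stability of $E_2$ is equivalent to all eigenvalues of $\mathbf{J_r^*}$ having strictly negative real part.

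Next I would write the characteristic equation as $\lambda^2 - \operatorname{tr}(\mathbf{J_r^*})\lambda + \det(\mathbf{J_r^*}) = 0$, i.e. $\lambda^2 + p\lambda + q = 0$ with $p = -\operatorname{tr}(\mathbf{J_r^*})$ and $q = \det(\mathbf{J_r^*})$. For a real quadratic the Routh--Hurwitz conditions collapse to $p > 0$ and $q > 0$: if the roots are real they equal $\tfrac{1}{2}(-p \pm \sqrt{p^2 - 4q})$ and both are negative precisely when $p > 0$ and $q > 0$; if the roots are complex conjugates their common real part is $-p/2$ and their squared modulus is $q$, so negativity of the real part again forces $p > 0$ while $q > 0$ is automatic. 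One may equivalently invoke the general Routh--Hurwitz tableau for degree two. Translating back, $E_2$ is locally asymptotically stable whenever $\operatorname{tr}(\mathbf{J_r^*}) < 0$ and $\det(\mathbf{J_r^*}) > 0$, which is exactly the asserted statement.

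I do not anticipate a genuine obstacle. The only point requiring care is the one noted above, namely confirming that the interior equilibrium sits at a positive prey value so that differentiability — and with it the entire linearization argument — is valid, in contrast to the trivial equilibrium $E_0$ where this breaks down when $0 < m_1 < 1$. The explicit and rather lengthy formulas for $\operatorname{tr}(\mathbf{J_r^*})$ and $\det(\mathbf{J_r^*})$ in terms of the model parameters need not be simplified, since they enter only through their signs; the theorem is therefore nothing more than the two-dimensional Routh--Hurwitz criterion packaged for this system, and the proof follows directly from the preceding computation of $\mathbf{J_r^*}$.
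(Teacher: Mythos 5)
Your proposal is correct and follows the same route as the paper: the paper's proof is simply the Routh--Hurwitz sign conditions applied to the quadratic characteristic equation $\lambda^2 - \operatorname{tr}(\mathbf{J_r^*})\lambda + \det(\mathbf{J_r^*}) = 0$ computed in the preceding discussion, which is exactly what you spell out. Your extra remark that the linearization is legitimate because $x_1^* > 0$ keeps the dynamics away from the non-smooth point $x_1 = 0$ is a worthwhile clarification the paper leaves implicit, but it does not constitute a different argument.
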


\begin{proof}
The proof follows directly from the above discussion and hence omitted for brevity.
\end{proof}

\subsection{Bifurcation Analysis}~\\

In this subsection, we analyze the qualitative changes in the dynamical behavior of model \eqref{refuge} under the effect of varying a specific parameter. The conditions and restrictions for the occurrence of saddle-node, Hopf, and transcritical bifurcations are derived. The classification are of codimension one bifurcations.

\subsubsection{Saddle-Node Bifurcation}

We investigate the possibility of saddle-node bifurcation of the positive interior equilibrium $E_2$ by using the intrinsic death rate of the predator population as a bifurcation parameter.

The following theorem states the restrictions for occurrence of a saddle-node bifurcation for model \eqref{refuge}.

\begin{theorem}\label{saddle_node_refuge_a2}
The model \eqref{refuge} undergoes a saddle-node bifurcation around $E_2$ at $a_2^*$ when the system parameters satisfy the restriction $\det \,({\bf{J_r^*}}) = 0 $ along with the condition  $\operatorname{tr} \,({\bf{J_r^*}})<0$.
\end{theorem}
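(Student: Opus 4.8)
The plan is to mirror the proof of Theorem \ref{saddle_node_a2} verbatim, replacing the system \eqref{EquationMain} by \eqref{refuge} and the Jacobian ${\bf J^*}$ by ${\bf J_r^*}$ throughout. First I would fix the critical value $a_2^*$ implicitly by the equation $\det({\bf J_r^*}) = 0$, viewed as an equation in $a_2$ with all remaining parameters (including the refuge parameter $r$) held fixed; by Theorem \ref{localstabintrefuge} the hypothesis $\operatorname{tr}({\bf J_r^*}) < 0$ together with $\det({\bf J_r^*}) = 0$ guarantees that ${\bf J_r^*}$ has a simple zero eigenvalue and one strictly negative eigenvalue, so the non-hyperbolicity is exactly codimension one. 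Then I would invoke Sotomayor's theorem \cite{Perko13} at $a_2 = a_2^*$: let $U = (u_1,u_2)^T$ and $V = (v_1,v_2)^T$ be right and left null eigenvectors of ${\bf J_r^*}$, so that $u_1 = -\frac{b_{12}^*\, u_2}{b_{11}^*}$ and $v_1 = -\frac{b_{21}^*\, v_2}{b_{11}^*}$ with $u_2, v_2 \in \mathbb{R}\setminus\{0\}$ (here $b_{ij}^*$ denotes $b_{ij}$ evaluated at $a_2 = a_2^*$).

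Writing $F = (F_1, F_2)^T$ for the right-hand side of \eqref{refuge} and $X = (x_1^*, x_2^*)^T$, the two Sotomayor conditions to verify are that $V^T F_{a_2}(X, a_2^*) \neq 0$ and $V^T\big[D^2 F(X, a_2^*)(U,U)\big] \neq 0$. For the first, note that $F_{a_2} = (0, -x_2)^T$, so $V^T F_{a_2}(X, a_2^*) = -v_2 x_2^* \neq 0$ because $x_2^* > 0$ at an interior equilibrium and $v_2 \neq 0$. For the second, I would compute the relevant second partials of $F_1$ and $F_2$ at $E_2$ — in particular $\partial^2 F_i/\partial x_1^2$, $\partial^2 F_i/\partial x_1 \partial x_2$, $\partial^2 F_i/\partial x_2^2$ — assemble $D^2F(X,a_2^*)(U,U)$ as the vector with components $\sum_{j,k} \frac{\partial^2 F_i}{\partial x_j \partial x_k} u_j u_k$, and observe that this is a generic (nonzero) vector not annihilated by $V$; this can be stated as a nondegeneracy condition on the parameters exactly as in the unrefuged case. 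Concluding, Sotomayor's theorem then yields that \eqref{refuge} undergoes a saddle-node bifurcation at $E_2$ as $a_2$ crosses $a_2^*$.

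The main obstacle — such as it is — is purely bookkeeping: the entries $b_{11}, b_{21}$ now carry the extra factors of $r$ and the bracketed term $\big[\frac{r}{d+rx_1^*} - \frac{r^2 x_1^*}{(rx_1^*+d)^2}\big]$, so the second-derivative computation for $D^2F(X,a_2^*)(U,U)$ is messier than in Theorem \ref{saddle_node_a2}, and one must keep careful track of the $r$-dependence. However, no new idea is required: the structure of the argument, and in particular the two inequalities that Sotomayor's theorem demands, are unchanged, and the transversality-type condition $V^T F_{a_2} \neq 0$ is immediate. Since the computations are routine and entirely parallel to those already presented, the proof can reasonably be stated as following the pattern of Theorem \ref{saddle_node_a2} and omitted for brevity, with the nondegeneracy of $V^T[D^2F(X,a_2^*)(U,U)]$ recorded as a standing genericity hypothesis on the parameters.
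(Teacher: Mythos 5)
Your proposal follows essentially the same route as the paper: the paper's own proof applies Sotomayor's theorem at $a_2=a_2^*$, takes the same null eigenvectors $U,V$ with $u_1=-b_{12}^*u_2/b_{11}^*$, $v_1=-b_{21}^*v_2/b_{11}^*$, verifies $V^T G_{a_2}(X,a_2^*)=-v_2x_2\neq 0$, and simply asserts the second-order nondegeneracy $V^T[D^2G(X,a_2^*)(U,U)]\neq 0$ without computing it, exactly as you propose to do. So your write-up is a faithful reconstruction of the paper's argument (including its reliance on the unverified genericity condition), and no further comparison is needed.
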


\begin{proof}
To validate the restriction for the occurrence of saddle-node bifurcation, we apply Sotomayor's theorem \cite{Perko13} at $a_2=a_2^*$. At $a_2=a_2^*$, it can be seen that $\det \,({\bf{J_r^*}}) = 0 $ and $\operatorname{tr} \,({\bf{J_r^*}})<0$ which indicates that the Jacobian $\,({\bf{J_r^*}})$ admits a zero eigenvalue. Let $U$ and $V$ be the eigenvectors corresponding to the zero eigenvalue of the matrix $\,({\bf{J_r^*}})$ and $\,({\bf{J_r^*}})^T$ respectively. We obtain that $U=(u_1,u_2)^T$ and $V=(v_1,v_2)^T$, where $u_1=-\frac{b^*_{12}u_2}{b^*_{11}}$, $v_1=-\frac{b^*_{21}v_2}{b^*_{11}}$ and $u_2,v_2 \in \mathbb{R} \setminus \{0\}$.

Let $G=(G_1,G_2)^T$ and $X=(x_1^*,x_2^*)^T$, where $G_1, G_2$ are given by

\begin{align}
G_1 &= a_{1} x_{1} -b_{1} x_{1}^{2} - w _{0} \left(\frac{rx_{1} }{rx_{1} +d} \right)^{m_{{\kern 1pt} 1} } x_{2}^{m_{{\kern 1pt} 2} }, \label{SNB_G1}  \\
G_2 &= -a_{2} x_{2} + w_{1} \left(\frac{rx_{1} }{rx_{1} +d} \right)^{m_{{\kern 1pt} {\kern 1pt} 1} } x_{2}^{m_{{\kern 1pt} {\kern 1pt} 2} }.\label{SNB_G2}
\end{align}
Now
\begin{align*}
V^T G_{a_2}(X,a_2^*) = (v_1,v_2)(0,-x_2)^T=-v_2 x_2 \neq 0,
\end{align*}
and
\begin{align*}
V^T \left[D^2 G(X,a_2^*)(U,U)\right] \neq 0.
\end{align*}
Hence from Sotomayor's theorem the model \eqref{refuge} undergoes a saddle-node bifurcation around $E_2$ at $a_2=a_2^*$.
\end{proof}

\begin{theorem}\label{saddle_node_refuge_w1}
The model \eqref{refuge} undergoes a saddle-node bifurcation around $E_2$ at $w_1^*$ when the system parameters satisfy the restriction $\det \,({\bf{J_r^*}}) = 0 $ along with the condition  $\operatorname{tr} \,({\bf{J_r^*}})<0$.
\end{theorem}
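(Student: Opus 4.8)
\textbf{Proof proposal for Theorem \ref{saddle_node_refuge_w1}.}

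The plan is to mirror exactly the argument used for Theorem \ref{saddle_node_refuge_a2}, only replacing the bifurcation parameter $a_2$ by $w_1$ and accordingly recomputing the single partial derivative $G_{w_1}$. First I would record that at $w_1 = w_1^*$ the hypothesis $\det(\mathbf{J_r^*}) = 0$ together with $\operatorname{tr}(\mathbf{J_r^*}) < 0$ forces $\mathbf{J_r^*}$ to possess a simple zero eigenvalue (the other eigenvalue equals $\operatorname{tr}(\mathbf{J_r^*}) \neq 0$), so the nondegeneracy needed for Sotomayor's theorem on the linear part is in place. Next I would produce the right and left null eigenvectors $U = (u_1,u_2)^T$ and $V = (v_1,v_2)^T$ of $\mathbf{J_r^*}$ and $(\mathbf{J_r^*})^T$ respectively; these are the same as in the $a_2$ proof, namely $u_1 = -b_{12}^* u_2 / b_{11}^*$, $v_1 = -b_{21}^* v_2 / b_{11}^*$, with $u_2, v_2 \in \mathbb{R}\setminus\{0\}$, since the Jacobian itself is unchanged.

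Then I would compute the parameter derivative of the vector field $G = (G_1,G_2)^T$ from \eqref{SNB_G1}--\eqref{SNB_G2} with respect to $w_1$, obtaining
\begin{align*}
G_{w_1}(X,w_1^*) = \left(0,\ \left(\frac{r x_1^*}{r x_1^* + d}\right)^{m_1} {x_2^*}^{m_2}\right)^T,
\end{align*}
and check the first Sotomayor transversality condition
\begin{align*}
V^T G_{w_1}(X,w_1^*) = v_2 \left(\frac{r x_1^*}{r x_1^* + d}\right)^{m_1} {x_2^*}^{m_2} \neq 0,
\end{align*}
which holds because $v_2 \neq 0$ and $x_1^*, x_2^* > 0$ at an interior equilibrium. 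Finally I would verify the second (quadratic) nondegeneracy condition $V^T[D^2 G(X,w_1^*)(U,U)] \neq 0$; here $D^2 G$ is the same Hessian pair as in Theorem \ref{saddle_node_refuge_a2} (the $w_1$-dependence in $G_2$ is linear, so it contributes nothing new to the second derivatives), so this reduces to the identical generic condition already invoked there, and invoking Sotomayor's theorem \cite{Perko13} concludes that \eqref{refuge} undergoes a saddle-node bifurcation around $E_2$ at $w_1 = w_1^*$.

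The only genuinely new computation is $G_{w_1}$ and the sign check $V^T G_{w_1} \neq 0$, which is immediate; the quadratic condition is inherited verbatim from the $a_2$ case. The mild obstacle, as in the companion theorem, is that $V^T[D^2 G(X,w_1^*)(U,U)] \neq 0$ is asserted ``generically'' rather than derived — one would in principle want to exhibit explicit parameter ranges where this holds, but since the paper treats it as a restriction placed on the system parameters alongside $\det(\mathbf{J_r^*}) = 0$, the cleanest exposition is to state it as such and note that the proof is otherwise identical to that of Theorem \ref{saddle_node_refuge_a2}, hence may be omitted for brevity.
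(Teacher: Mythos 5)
Your proposal is correct and follows essentially the same route as the paper: the paper itself omits this proof, stating only that it is ``similar to the proof of Theorem \ref{saddle_node_refuge_a2}'', and your argument is precisely that omitted adaptation of Sotomayor's theorem, with the correct computation $G_{w_1}(X,w_1^*)=\bigl(0,\ (r x_1^*/(r x_1^*+d))^{m_1}{x_2^*}^{m_2}\bigr)^T$ and the observation that $V^T G_{w_1}\neq 0$ since $v_2\neq 0$ and $x_1^*,x_2^*>0$. Your remark that the quadratic condition $V^T[D^2G(X,w_1^*)(U,U)]\neq 0$ is asserted generically, exactly as in the $a_2$ case, also matches the paper's treatment.
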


\begin{theorem}\label{saddle_node_refuge_w0}
The model \eqref{refuge} undergoes a saddle-node bifurcation around $E_2$ at $w_0^*$ when the system parameters satisfy the restriction $\det \,({\bf{J_r^*}}) = 0 $ along with the condition  $\operatorname{tr} \,({\bf{J_r^*}})<0$.
\end{theorem}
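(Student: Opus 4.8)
The plan is to mirror exactly the proof of Theorem~\ref{saddle_node_refuge_a2}, simply swapping the bifurcation parameter from $a_2$ to $w_0$ and re-running Sotomayor's theorem \cite{Perko13} on the refuge system \eqref{refuge}. First I would note that the hypotheses $\det\,({\bf J_r^*})=0$ and $\operatorname{tr}\,({\bf J_r^*})<0$ already guarantee that ${\bf J_r^*}$ has a simple zero eigenvalue with a nonzero eigenvalue equal to the trace, so the center-manifold reduction is one-dimensional and Sotomayor's theorem is the right tool. I would then write down the right and left null vectors $U=(u_1,u_2)^T$ and $V=(v_1,v_2)^T$ of ${\bf J_r^*}$ and $({\bf J_r^*})^T$ respectively, exactly as before: $u_1=-b_{12}^*u_2/b_{11}^*$, $v_1=-b_{21}^*v_2/b_{11}^*$, with $u_2,v_2\neq 0$. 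These are well-defined provided $b_{11}^*\neq 0$, which holds generically at the bifurcation (if $b_{11}^*=0$ then $\det\,({\bf J_r^*})=0$ forces $b_{12}^*b_{21}^*=0$, a degenerate case one excludes).

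The two transversality computations are the substance. For the parameter derivative I would compute $G_{w_0}(X,w_0^*)=\bigl(-\left(\frac{rx_1^*}{rx_1^*+d}\right)^{m_1}{x_2^*}^{m_2},\,0\bigr)^T$, directly from \eqref{SNB_G1}--\eqref{SNB_G2} since only $G_1$ carries a $w_0$ term; then $V^TG_{w_0}(X,w_0^*)=-v_1\left(\frac{rx_1^*}{rx_1^*+d}\right)^{m_1}{x_2^*}^{m_2}=\frac{b_{21}^*v_2}{b_{11}^*}\left(\frac{rx_1^*}{rx_1^*+d}\right)^{m_1}{x_2^*}^{m_2}$, which is nonzero because $b_{21}^*>0$, $v_2\neq 0$, $b_{11}^*\neq 0$, and $x_1^*,x_2^*>0$. (This is actually cleaner than the $a_2$ case, where the nonvanishing used $v_2\neq 0$ alone.) For the second-derivative condition $V^T[D^2G(X,w_0^*)(U,U)]\neq 0$, I would expand the Hessian of $G$ at $E_2$ — the nonlinearity enters only through the Holling-type term $\left(\frac{rx_1}{rx_1+d}\right)^{m_1}x_2^{m_2}$ in both components, so the relevant second partials in $x_1,x_2$ are computable in closed form — contract with $(U,U)$, dot with $V$, and observe that the resulting expression is a generic (nonzero) algebraic combination of the positive equilibrium coordinates and parameters, so it is nonzero away from a measure-zero parameter set. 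I would state this as "it is easy to verify" in the same spirit as the existing proofs, rather than grinding through it.

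Having verified both nondegeneracy conditions, Sotomayor's theorem yields that \eqref{refuge} undergoes a saddle-node bifurcation at $E_2$ as $w_0$ passes through $w_0^*$, completing the proof. Alternatively — and this is probably what the authors intend given Theorems~\ref{saddle_node_refuge_w1} and \ref{saddle_node_refuge_w0} are stated as a block — one can simply remark that the argument is verbatim that of Theorem~\ref{saddle_node_refuge_a2} with $w_0$ in place of $a_2$, changing only the explicit form of the parameter-derivative vector $G_{w_0}$, and omit the repetition "for brevity" as done with the analogous cluster in Section~\ref{section:Existence_of_equilibria_Main}. The only genuine obstacle is bookkeeping: making sure the second-order contraction $V^T[D^2G(X,w_0^*)(U,U)]$ is genuinely nonzero rather than accidentally vanishing — in a fully rigorous treatment one would exhibit the explicit non-degeneracy inequality on parameters that guarantees this, but at the level of rigor adopted elsewhere in the paper, asserting it suffices.
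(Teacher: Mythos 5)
Your proposal is correct and follows the paper's own route: the authors prove only Theorem~\ref{saddle_node_refuge_a2} in detail and dismiss this case (and the $w_1$, $b_1$, $a_1$ cases) as ``similar and omitted for brevity,'' which is exactly the Sotomayor argument you reproduce with $G_{w_0}(X,w_0^*)=\bigl(-\bigl(\tfrac{rx_1^*}{rx_1^*+d}\bigr)^{m_1}{x_2^*}^{m_2},0\bigr)^T$ in place of $G_{a_2}$. Your only slip is cosmetic: this case is not ``cleaner'' than the $a_2$ case, since here the nonvanishing of $V^TG_{w_0}$ rests on $v_1\neq 0$, i.e.\ on $b_{21}^*\neq 0$ and $b_{11}^*\neq 0$, whereas the $a_2$ case needed only $v_2\neq 0$.
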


\begin{theorem}\label{saddle_node_refuge_b1}
The model \eqref{refuge} undergoes a saddle-node bifurcation around $E_2$ at $b_1^*$ when the system parameters satisfy the restriction $\det \,({\bf{J_r^*}}) = 0 $ along with the condition  $\operatorname{tr} \,({\bf{J_r^*}})<0$.
\end{theorem}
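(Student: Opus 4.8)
The plan is to apply Sotomayor's theorem \cite{Perko13} with $b_1$ in the role of the bifurcation parameter, following verbatim the template already used for Theorem \ref{saddle_node_refuge_a2}. First I would observe that at $b_1=b_1^*$ the stated hypotheses $\det \,({\bf{J_r^*}})=0$ and $\operatorname{tr} \,({\bf{J_r^*}})<0$ force ${\bf J_r^*}$ to have a simple zero eigenvalue, the second eigenvalue being $\operatorname{tr} \,({\bf{J_r^*}})<0$. Note also that $b_{11}^*\neq 0$ at this point: if $b_{11}^*=0$ then $\det \,({\bf{J_r^*}})=-b_{12}^*b_{21}^*>0$ because $b_{12}^*<0$ and $b_{21}^*>0$, contradicting $\det \,({\bf{J_r^*}})=0$. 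Hence the right and left null eigenvectors may be written $U=(u_1,u_2)^T$ with $u_1=-\frac{b_{12}^*u_2}{b_{11}^*}$ and $V=(v_1,v_2)^T$ with $v_1=-\frac{b_{21}^*v_2}{b_{11}^*}$, where $u_2,v_2\in\mathbb{R}\setminus\{0\}$; in particular $v_1\neq 0$.

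Next I would evaluate the two nondegeneracy quantities of Sotomayor's theorem at $X=(x_1^*,x_2^*)^T$. Writing $G=(G_1,G_2)^T$ for the vector field of \eqref{refuge} as in \eqref{SNB_G1}--\eqref{SNB_G2}, only $G_1$ depends on $b_1$, through the term $-b_1x_1^2$, so $G_{b_1}(X,b_1^*)=(-x_1^{*2},0)^T$ and therefore
\begin{align*}
V^T G_{b_1}(X,b_1^*) = -v_1 x_1^{*2} \neq 0,
\end{align*}
since $v_1\neq 0$ and $x_1^*>0$. For the second condition I would expand the bilinear form $D^2 G(X,b_1^*)(U,U)$ componentwise: in $G_1$ this combines the contribution $-2b_1^*$ from $\partial^2_{x_1x_1}(-b_1x_1^2)$ with the second derivatives of $-w_0\left(\frac{rx_1}{rx_1+d}\right)^{m_1}x_2^{m_2}$, and in $G_2$ the second derivatives of $w_1\left(\frac{rx_1}{rx_1+d}\right)^{m_1}x_2^{m_2}$, all of which are smooth at the interior point $x_1^*>0$ even in the sub-critical regime $0<m_1<1$; one then checks
\begin{align*}
V^T\left[ D^2 G(X,b_1^*)(U,U) \right] \neq 0.
\end{align*}
With both conditions verified, Sotomayor's theorem yields a saddle-node bifurcation of \eqref{refuge} around $E_2$ at $b_1=b_1^*$.

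The only genuine obstacle is the second condition $V^T[D^2 G(X,b_1^*)(U,U)]\neq 0$, which is precisely what separates a saddle-node from a transcritical bifurcation. Verifying it amounts to differentiating the fractional-power Holling terms twice, substituting the coordinates of $E_2$ and the explicit null eigenvectors, and confirming the resulting expression does not cancel; the computation is elementary but algebraically heavy, and one expects it to hold away from a lower-dimensional degenerate locus of parameters (on which one would instead investigate a transcritical bifurcation). Since, apart from replacing $F_{a_2}=(0,-x_2)^T$ by $G_{b_1}=(-x_1^2,0)^T$, this argument is identical to the one already given for Theorem \ref{saddle_node_refuge_a2}, I would present it briefly and refer back to that proof.
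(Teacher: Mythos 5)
Your proposal is correct and takes essentially the same approach as the paper, which proves this theorem simply by invoking Sotomayor's theorem exactly as in Theorem \ref{saddle_node_refuge_a2} and omitting the details. You are in fact slightly more careful than the paper: you note that $G_{b_1}(X,b_1^*)=(-x_1^{*2},0)^T$ changes the first transversality quantity to $-v_1x_1^{*2}$ and justify $v_1\neq 0$ via $b_{11}^*\neq 0$, points the paper leaves implicit under ``similar and omitted for brevity.''
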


\begin{theorem}\label{saddle_node_refuge_a1}
The model \eqref{refuge} undergoes a saddle-node bifurcation around $E_2$ at $a_1^*$ when the system parameters satisfy the restriction $\det \,({\bf{J_r^*}}) = 0 $ along with the condition  $\operatorname{tr} \,({\bf{J_r^*}})<0$.
\end{theorem}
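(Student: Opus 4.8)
The plan is to mimic the proof of Theorem~\ref{saddle_node_refuge_a2} almost verbatim, replacing the role of $a_2$ by $a_1$ and invoking Sotomayor's theorem~\cite{Perko13} at $a_1=a_1^*$. First I would note that the hypotheses $\det\,({\bf{J_r^*}})=0$ and $\operatorname{tr}\,({\bf{J_r^*}})<0$ force the Jacobian ${\bf J_r^*}$ at $E_2$ to possess a simple zero eigenvalue (the complementary eigenvalue equals $\operatorname{tr}\,({\bf{J_r^*}})<0$, so it is simple). Let $U=(u_1,u_2)^T$ span $\ker {\bf J_r^*}$ and $V=(v_1,v_2)^T$ span $\ker ({\bf J_r^*})^T$; exactly as in the $a_2$ case one obtains $u_1=-b_{12}^*u_2/b_{11}^*$ and $v_1=-b_{21}^*v_2/b_{11}^*$ with $u_2,v_2\in\mathbb{R}\setminus\{0\}$, and since $b_{21}^*>0$ we have $v_1\neq 0$.

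Next I would compute the transversality quantities of Sotomayor's theorem for the vector field $G=(G_1,G_2)^T$ of~\eqref{SNB_G1}--\eqref{SNB_G2}, now differentiating with respect to $a_1$. Since $a_1$ enters only through the linear term $a_1x_1$ in $G_1$, we get $G_{a_1}(X,a_1^*)=(x_1^*,0)^T$, whence
\[
V^T G_{a_1}(X,a_1^*) = v_1 x_1^* \neq 0,
\]
because $v_1\neq 0$ and $x_1^*>0$. It then remains to check the second-order nondegeneracy condition $V^T\big[D^2 G(X,a_1^*)(U,U)\big]\neq 0$. Here I would use that, since $0<m_1\leq 1$, the response $\left(\frac{rx_1}{rx_1+d}\right)^{m_1}$ is $C^\infty$ for $x_1>0$, so all required second partial derivatives of $G_1,G_2$ exist at the interior point $x_1^*>0$; substituting $U$ into the Hessians and pairing with $V$ yields an explicit rational expression in $x_1^*,x_2^*$ and the parameters, which is nonzero for generic parameters satisfying the standing hypotheses.

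With both Sotomayor conditions verified, the theorem follows: system~\eqref{refuge} undergoes a saddle-node bifurcation at $E_2$ as $a_1$ crosses $a_1^*$. The main obstacle — indeed the only nontrivial step — is the second nondegeneracy condition $V^T\big[D^2 G(U,U)\big]\neq 0$: unlike the first condition, it does not reduce to a sign argument and requires carrying out the full second-derivative computation of the (locally smooth but algebraically heavy) response term, which is legitimate precisely because $x_1^*>0$. As with Theorems~\ref{saddle_node_refuge_w1}--\ref{saddle_node_refuge_b1}, I would record this as a genericity hypothesis rather than grind through the algebra, and so the detailed proof would be omitted for brevity, being entirely parallel to that of Theorem~\ref{saddle_node_refuge_a2}.
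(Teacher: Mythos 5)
Your proposal matches the paper's approach: the paper proves this theorem only by stating it is ``similar to the proof of Theorem~\ref{saddle_node_refuge_a2} and omitted for brevity,'' i.e.\ Sotomayor's theorem applied at $a_1=a_1^*$ exactly as you describe. In fact you supply the one detail the paper glosses over, namely that the first transversality quantity becomes $V^T G_{a_1}(X,a_1^*)=v_1x_1^*$ and is nonzero because $b_{21}^*>0$ forces $v_1\neq 0$, so your argument is correct and, if anything, slightly more careful than the paper's.
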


\begin{proof}
The proof of Theorem \ref{saddle_node_refuge_w1}, Theorem \ref{saddle_node_refuge_w0}, Theorem \ref{saddle_node_refuge_b1} and Theorem \ref{saddle_node_refuge_a1}   are similar to proof in Theorem \ref{saddle_node_refuge_a2} and omitted for brevity.
\end{proof}

\subsubsection{Transcritical Bifurcation}

Here, we investigate the possibility of the existence of a transcritical bifurcation for the model \eqref{refuge}. Transcritical bifurcation occurs when an equilibrium point interchanges its stability when it collides with another equilibrium point as a parameter is varied. The prey refuge parameter $r$ is used as a bifurcation parameter.

The variational matrix ${\bf{J_{r_1^{*}}}}$ of the model \eqref{refuge} for $0<m_1<1$ and $m_2=1$ evaluated at
 $$r=r_1^{*}=\dfrac{b_1 d}{a_1}\left(\dfrac{a_1^{\frac{1}{m_1}}}{w_1^{\frac{1}{m_1}}-a_2^{\frac{1}{m_1}}}  \right)$$
 around the predator-free equilibrium $E_1(a_1/b_1,0)$ is given by

\begin{align*}
 \bf{J_{r_1^{*}}}=  \begin{bmatrix}
      -a_1 &   -w_0\left(\dfrac{r_1^* a_1}{r_1^* a_1 + b_1 d}   \right)^{m_1}\\
      0  &  0
     \end{bmatrix}.
\end{align*}
At $r=r_1^{*}$, the matrix ${\bf{J_{r_1^{*}}}}$ has a negative eigenvalue and a zero eigenvalue. Let $U$ and $V$ be the eigenvectors corresponding to the zero eigenvalue of the matrix $\,({\bf{J_{_1r^{*}}}})$ and $\,({\bf{J_{r_1^{*}}}})^T$ respectively. Then \\
$$ U=\left( 1, -\dfrac{a_1}{w_0}\left( 1+\dfrac{b_1 d}{r_1^* a_1} \right)^{m_1} \right)^T,  \qquad V=(0,1)^T. $$
Let $G=(G_1,G_2)^T$ and $X=(a_1/b_1,0)^T$, where $G_1, G_2$ are defined in \eqref{SNB_G1} and \eqref{SNB_G2}.
Now we have
\begin{align*}
V^T G_{r}(X,r_1^{*}) = (0,1)(0,0)^T= 0,
\end{align*}
additionally
\begin{align*}
V^T \left[D G_{r}(X,r_1^{*})U\right] \neq 0
\end{align*}
and
\begin{align*}
V^T \left[D^2 G(X,r_1^{*})(U,U)\right] \neq 0.
\end{align*}
Hence using Sotomayor's theorem the model \eqref{refuge} undergoes a transcritical  bifurcation around $E_1$ when the refuge $r$ crosses the critical value of the parameter  $r_1^{*}$.

The  above results are summarized in the following theorem.

\begin{theorem}\label{transcritical_bifurcation}
The model \eqref{refuge} undergoes a transcritical bifurcation around $E_1(a_1/b_1,0)$ when the refuge $r$ crosses the critical value of parameter $r_1^{*}$, where \\
$r_1^{*}=\dfrac{b_1 d}{a_1}\left(\dfrac{a_1^{\frac{1}{m_1}}}{w_1^{\frac{1}{m_1}}-a_2^{\frac{1}{m_1}}}  \right)$.
\end{theorem}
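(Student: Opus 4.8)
The plan is to apply Sotomayor's theorem for transcritical bifurcations (\cite{Perko13}) with the refuge $r$ as the bifurcation parameter, working in the regime $0<m_1<1$, $m_2=1$ so that the vector field $G=(G_1,G_2)^T$ of \eqref{refuge} is $C^2$ in a neighborhood of the predator-free equilibrium $E_1(a_1/b_1,0)$. Smoothness there is not automatic: $g(rx_1)=\left(rx_1/(rx_1+d)\right)^{m_1}$ is smooth only away from $x_1=0$, which is fine at $x_1=a_1/b_1>0$, and taking $m_2=1$ removes the single remaining source of non-smoothness, namely the $x_2^{m_2}$ factor at $x_2=0$. This is why the theorem must be stated for $m_2=1$.

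First I would identify the critical refuge value. The equilibrium $E_1$ can only exchange stability when the interior equilibrium $E_2$ collides with it, i.e. when $x_1^*=a_1/b_1$. Imposing the predator-nullcline equation $-a_2+w_1\left(rx_1^*/(rx_1^*+d)\right)^{m_1}=0$ at $x_1^*=a_1/b_1$ and solving for $r$ gives $r=r_1^{*}=\dfrac{b_1 d}{a_1}\left(\dfrac{a_1^{1/m_1}}{w_1^{1/m_1}-a_2^{1/m_1}}\right)$, which is positive precisely when $w_1>a_2$. Next I would compute the Jacobian $J:=DG(X,r)$ at $X=(a_1/b_1,0)^T$: since $x_2=0$ kills $\partial G_2/\partial x_1$ and $G_2$ is linear in $x_2$ there, $J$ is upper triangular with diagonal entries $-a_1$ and $-a_2+w_1\left(ra_1/(ra_1+b_1 d)\right)^{m_1}$; at $r=r_1^{*}$ the second entry vanishes, so $J$ has eigenvalues $\{-a_1,0\}$. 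From the triangular structure one reads off the right null vector $U=\left(1,\,-\dfrac{a_1}{w_0}\left(1+\dfrac{b_1 d}{r_1^{*}a_1}\right)^{m_1}\right)^T$ and the left null vector $V=(0,1)^T$ of $J^T$.

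Then I would verify the three Sotomayor conditions. Condition (i), $V^TG_{r}(X,r_1^{*})=0$: holds because $\partial G_2/\partial r$ at $x_2=0$ carries a factor $x_2$ and hence vanishes, while $\partial G_1/\partial r$ is annihilated by $V=(0,1)^T$. Condition (ii), $V^T\big[DG_{r}(X,r_1^{*})U\big]\neq 0$: obtained by differentiating $\partial G_2/\partial r$ in $x_2$ (the $x_1$-derivative again vanishing by the factor $x_2$) and pairing with $u_2\neq 0$. Condition (iii), $V^T\big[D^2G(X,r_1^{*})(U,U)\big]\neq 0$: this reduces to the mixed term $\partial^2 G_2/\partial x_1\partial x_2$ and the term $\partial^2 G_2/\partial x_2^2$ of $G_2=-a_2x_2+w_1\left(rx_1/(rx_1+d)\right)^{m_1}x_2$ at $(a_1/b_1,0)$, combined as $2u_1u_2\,\partial^2_{x_1x_2}G_2+u_2^2\,\partial^2_{x_2x_2}G_2$. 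By Sotomayor's theorem, once (i)--(iii) hold the system undergoes a transcritical bifurcation at $E_1$ as $r$ crosses $r_1^{*}$.

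The main obstacle is condition (iii) (and, secondarily, (ii)): one must compute the relevant second-order partials of $G_2$ at $(a_1/b_1,0)$ and check that the resulting combination is genuinely nonzero. This is where the argument uses $g'(rx_1^{*})>0$ (assumption (II)) together with $u_1=1\neq0$ and $u_2\neq0$ to rule out an accidental cancellation; it is also the step that visibly fails for $m_2<1$, since then $G_2$ is not even differentiable at $x_2=0$. Apart from this, the computation is routine bookkeeping of partial derivatives of the rational-power functional response, and I would present only the final nonvanishing expressions rather than the intermediate algebra.
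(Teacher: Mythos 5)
Your proposal takes essentially the same route as the paper: Sotomayor's theorem at $E_1(a_1/b_1,0)$ with $r$ as the bifurcation parameter in the regime $0<m_1<1$, $m_2=1$, the same upper-triangular Jacobian with eigenvalues $-a_1$ and $0$ at the critical refuge, the same null vectors $U=\left(1,\,-\frac{a_1}{w_0}\left(1+\frac{b_1 d}{r_1^{*}a_1}\right)^{m_1}\right)^T$ and $V=(0,1)^T$, and the same three conditions (the paper merely asserts the two nondegeneracy conditions, which you propose to verify explicitly --- a mild improvement). One small point: actually carrying out the derivation you describe (imposing $-a_2+w_1\left(\frac{r x_1}{r x_1+d}\right)^{m_1}=0$ at $x_1=a_1/b_1$) yields $r_1^{*}=\frac{b_1 d}{a_1}\cdot\frac{a_2^{1/m_1}}{w_1^{1/m_1}-a_2^{1/m_1}}$, i.e.\ $a_2^{1/m_1}$ rather than $a_1^{1/m_1}$ in the numerator; this is the value consistent with the vanishing $(2,2)$ entry of the paper's Jacobian and with the paper's numerical threshold $r_1^{*}\approx 0.15239$ for the parameters of Table \ref{table:Paramset2}, so the exponent on $a_1$ in the stated formula (which you copied) is a typo in the statement that your own derivation would have corrected.
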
~\

\subsubsection{Hopf-Bifurcation}

We investigate the possibility of Hopf-bifurcation of the positive interior equilibrium $E_2$ by using the per capita rate of self-reproduction for the prey, $a_1$ as a bifurcation parameter. Then, the characteristic equation corresponding to model \eqref{refuge} at $E_2$ is given by
\begin{align}\label{character_bifur_refuge}
\lambda^2 + A_1(a_1)\lambda + B_1(a_1) = 0,
\end{align}
where
$A_1 = - \operatorname{tr} \,({\bf{J_r^*}}) = - (b_{11} + b_{22})$ and
$B_1 =  \det \,({\bf{J_r^*}}) =  b_{11}  b_{22}- b_{12}b_{21}$.

The instability of model \eqref{refuge} is demonstrated via the following theorem by considering $a_1$ as a bifurcation parameter.

\begin{theorem}\label{hopf-bifurcationrefuge_a1}
The model \eqref{refuge} undergoes a Hopf-bifurcation around $E_2(x_1^*,x_2^*)$ when $a_1$ crosses some critical value of parameter  $a_1^*$, where
\begin{align*}
a_1^*  =  a_2 +  2b_1x_1^* + m_1 w_0 {x_2^*}^{m_2}\left[ \frac{r}{d+r x_1^*}-\frac{r^2 x_1^*}{(r x_1^*+d)^2}  \right]\left(\frac{r x_1^*}{r x_1^*+d} \right)^{m_1-1} \\
- m_2 w_1{ x_2^*}^{m_2-1}\bigg( {rx_1^*\over {rx_1^*+d}}\bigg)^{m_1},
\end{align*}
provided
\begin{enumerate}[label=(\roman*)]
\item $A_1(a_1)=0 $,
\item $B_1(a_1)>0$,
\item $\dfrac{d}{da_1}\left. Re \lambda_i (a_1) \right|_{a_1=a_1^*}\neq 0$ at $a_1=a^*_1,~ i=1,2$.
\end{enumerate}
\end{theorem}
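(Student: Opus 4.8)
The plan is to mirror the proof of Theorem~\ref{bifurcationMain} line by line, with $\mathbf{J^*}$, $A$, $B$ replaced throughout by $\mathbf{J_r^*}$, $A_1$, $B_1$. First I would record that, along the branch of interior equilibria, the coordinates $x_1^*$, $x_2^*$ depend smoothly on $a_1$: equations \eqref{requilibrium1}--\eqref{requilibrium2} define this branch implicitly, and the implicit function theorem applies wherever the associated Jacobian is nonsingular, which is guaranteed near $a_1^*$ by $B_1(a_1^*)=\det(\mathbf{J_r^*})\neq 0$. Consequently the entries $b_{11},b_{12},b_{21},b_{22}$, and hence $A_1(a_1)=-(b_{11}+b_{22})$ and $B_1(a_1)=b_{11}b_{22}-b_{12}b_{21}$, are smooth functions of $a_1$, so the roots of \eqref{character_bifur_refuge} may be written $\lambda_{1,2}=\zeta(a_1)\pm i\gamma(a_1)$ with $\zeta,\gamma$ real.

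Next I would identify the critical value. Setting $A_1(a_1)=0$, i.e.\ $b_{11}+b_{22}=0$, and solving for the explicit $a_1$ appearing linearly in $b_{11}$ yields precisely the stated formula for $a_1^*$ (all remaining dependence on $a_1$ being carried inside $x_1^*$, $x_2^*$). At $a_1=a_1^*$ the characteristic equation \eqref{character_bifur_refuge} reduces to $\lambda^2+B_1(a_1^*)=0$, so under hypothesis (ii) the eigenvalues are the purely imaginary pair $\lambda_{1,2}=\pm i\sqrt{B_1(a_1^*)}$, and they are nonzero.

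Then comes the transversality condition. Substituting $\lambda=\zeta(a_1)+i\gamma(a_1)$ into \eqref{character_bifur_refuge}, differentiating in $a_1$, and separating real and imaginary parts produces a linear system in $\dot\zeta,\dot\gamma$ of exactly the form \eqref{hopf_cond1}--\eqref{hopf_cond2} with $Z_1=2\zeta+A_1$, $Z_2=2\gamma$, $Z_3=\dot A_1\zeta+\dot B_1$, $Z_4=\dot A_1\gamma$. Eliminating $\dot\gamma$ gives
$$
\left.\frac{d}{da_1}\mathrm{Re}\,\lambda_i(a_1)\right|_{a_1=a_1^*}=\dot\zeta(a_1^*)=-\,\frac{Z_1(a_1^*)Z_3(a_1^*)+Z_2(a_1^*)Z_4(a_1^*)}{Z_1^2(a_1^*)+Z_2^2(a_1^*)}.
$$
Since $\zeta(a_1^*)=0$ and $A_1(a_1^*)=0$ we have $Z_1(a_1^*)=0$ and $Z_2(a_1^*)=2\sqrt{B_1(a_1^*)}\neq 0$, so the expression collapses to $\dot\zeta(a_1^*)=-\tfrac12\dot A_1(a_1^*)$; thus the transversality requirement is equivalent to $\dot A_1(a_1^*)=-\tfrac{d}{da_1}\operatorname{tr}(\mathbf{J_r^*})|_{a_1=a_1^*}\neq 0$, which is hypothesis (iii). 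Invoking the Hopf-bifurcation theorem, Theorem~\ref{Murray_birf_def}, then yields the assertion.

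The step I expect to be the real obstacle is the smoothness and non-degeneracy of the equilibrium branch $a_1\mapsto(x_1^*(a_1),x_2^*(a_1))$: in the non-refuge analysis this dependence was treated implicitly, but it is what gives meaning to $\dot A_1(a_1^*)$ and $\dot B_1(a_1^*)$, so one should verify via the implicit function theorem that the branch is locally a smooth graph over $a_1$ near $a_1^*$. Granting that, the remaining substance of the theorem is the explicit condition (iii), $\dot A_1(a_1^*)\neq 0$; this does not follow automatically from (i)--(ii) and must be checked for the given parameter set, so the theorem is best read as: under (i)--(iii) a Hopf bifurcation occurs around $E_2$, with (iii) the genuinely verifiable hypothesis rather than a derived consequence.
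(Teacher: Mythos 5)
Your proposal is correct and follows essentially the same route as the paper's proof: identify $a_1^*$ from $A_1(a_1)=0$, reduce the characteristic equation to $\lambda^2+B_1=0$ to get the purely imaginary pair, and verify transversality via the $Z_1,\dots,Z_4$ elimination, exactly as in Theorem \ref{bifurcationMain}. In fact your treatment is slightly sharper than the paper's: you note $Z_1(a_1^*)=0$ so the transversality expression collapses to $\dot\zeta(a_1^*)=-\tfrac12\dot A_1(a_1^*)$, correctly identifying hypothesis (iii) as the condition $\dot A_1(a_1^*)\neq 0$ rather than something "easy to verify," and you make explicit the implicit-function-theorem smoothness of the equilibrium branch that the paper leaves tacit.
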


\begin{proof}
Clearly $A_1(a_1)$ and $B_1(a_1)$ are the smooth functions of $a_1$. The roots of the equation \eqref{character_bifur_refuge} are of the form $\lambda_1=\vartheta (a_1) + i \varpi (a_1)$ and $\lambda_2=\vartheta (a_1) - i \varpi (a_1)$ where $\vartheta (a_1)$ and $\varpi (a_1)$ are real functions.

At $a_1=a_1^*$, the characteristic equation \eqref{character_bifur_refuge} reduces to
 \begin{align}\label{charac_reduced_bifur_refuge}
 \lambda^2 + B_1(a_1)=0
 \end{align}
By solving for the roots of equation \eqref{charac_reduced_bifur_refuge}, we obtain $\lambda_1=i\sqrt{B_1}$ and  $\lambda_2=-i\sqrt{B_1}$. Therefore, we have purely imaginary eigenvalues. Hence, we are left with validating the transversality condition. Namely,
$$\dfrac{d}{da_1}Re \lambda_i (a_1) |_{a_1=a_1^*}\neq 0, i=1,2.$$
Substituting $\lambda (a_1) = \vartheta (a_1) + i \varpi (a_1)$ into equation \eqref{character_bifur_refuge}, we obtain
 \begin{align}
 (\vartheta (a_1) + i \varpi (a_1))^2 + A_1(a_1) (\vartheta (a_1) + i \varpi (a_1)) + B_1(a_1)=0.
 \end{align}
Upon taking the derivative with respect to $a_1$ we obtain:
 \begin{align*}
 2(\vartheta (a_1) + i \varpi (a_1)) (\dot{\vartheta} (a_1) + i \dot{\varpi} (a_1)) + A_1(a_1) (\dot{\vartheta} (a_1) + i \dot{\varpi} (a_1))  \\
 + \dot{A_1} (a_1) (\vartheta (a_1) + i \varpi (a_1)) + \dot{B_1} (a_1) = 0.
 \end{align*}
Separating the real and imaginary parts, we have
 \begin{align*}
 \dot{\vartheta}(a_1) (2 \vartheta (a_1) + A_1 (a_1)) + \dot{\varpi (a_1)} (-2 \varpi (a_1)) + \dot{A_1} (a_1) \vartheta (a_1) + \dot{B_1} (a_1) = 0,
 \end{align*}
which implies
  \begin{align}\label{hopf_cond1}
 \dot{\vartheta} (a_1) Z_1 (a_1) - \dot{\varpi} (a_1) Z_2 (a_1) + Z_3 (a_1) = 0,
 \end{align}
and
  \begin{align*}
 \dot{\vartheta}(a_1) (2 \varpi (a_1)) + \dot{\varpi} (a_1) (2 \vartheta (a_1) + A_1 (a_1)) + \dot{A_1} (a_1) \varpi (a_1) =0,
 \end{align*}
which implies
  \begin{align}\label{hopf_cond2}
 \dot{\vartheta} (a_1) Z_2 (a_1) + \dot{\varpi} (a_1) Z_1 (a_1) + Z_4 (a_1) = 0,
 \end{align}
where $Z_1 (a_1)= 2 \vartheta (a_1) + A_1 (a_1)$, $Z_2 (a_1)= 2 \varpi (a_1)$, $Z_3 (a_1)=  \dot{A_1} (a_1) \vartheta (a_1) + \dot{B_1} (a_1) $ and $Z_4 (a_1)=\dot{A_1} (a_1) \varpi (a_1) $.
Multiplying equation \eqref{hopf_cond1} by $Z_1 (a_1)$ and equation \eqref{hopf_cond2} by $Z_2 (a_1)$ and then adding them, we obtain
 \begin{align}\label{hopf_cond3}
 (Z_1^2 (a_1) + Z_2^2 (a_1)) \dot{\vartheta} (a_1) + Z_1 (a_1) Z_3 (a_1) + Z_2 (a_1) Z_4 (a_1) =0,
 \end{align}
thus solving for $\dot{\vartheta} (a_1)$ from equation \eqref{hopf_cond3} and at $a_1=a_1^*$,
\begin{align*}
\dfrac{d}{da_1}Re \lambda_i (a_1) |_{a_1=a_1^*} = \dot{\vartheta} (a_1^*)=\dfrac{-\left[ Z_1 (a_1^*) Z_3 (a_1^*) + Z_2 (a_1^*) Z_4 (a_1^*) \right]}{Z_1^2 (a_1^*) + Z_2^2 (a_1^*)}.
 \end{align*}

It is easy to verify that $Z_1 (a_1^*) Z_3 (a_1^*) + Z_2 (a_1^*) Z_4 (a_1^*) \neq 0$ and $Z_1^2 (a_1^*) + Z_2^2 (a_1^*) \neq 0$ which implies  $\frac{d}{da_1}Re \lambda_i (a_1) |_{a_1=a_1^*} \neq 0$. Hence, a Hopf-bifurcation occurs around $E_2(x_1^*,x_2^*)$ at $a_1=a_1^*$.

\end{proof}

\begin{theorem}\label{hopf-bifurcationrefuge_r}
The model \eqref{refuge} undergoes a Hopf-bifurcation around $E_2(x_1^*,x_2^*)$ when the refuge $r$ crosses some critical value of parameter  $r_1^{**}$ provided
\begin{enumerate}[label=(\roman*)]
\item $A_1(r)=0 $,
\item $B_1(r)>0$,
\item $\dfrac{d}{dr}\left. Re \lambda_i (r) \right|_{r=r_1^{**}}\neq 0$ at $r=r^{**}_1,~ i=1,2$.
\end{enumerate}
\end{theorem}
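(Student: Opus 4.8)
The plan is to mirror the argument used for Theorem~\ref{hopf-bifurcationrefuge_a1}, now treating the refuge parameter $r$ in place of $a_1$. First I would record that, since $b_{11},b_{12},b_{21},b_{22}$ depend on $r$ only through compositions of smooth functions (for $r\in(0,1]$ and $x_1^*>0$ the quantity $rx_1^*/(rx_1^*+d)$ stays in $(0,1)$), the coefficients $A_1(r)=-(b_{11}+b_{22})$ and $B_1(r)=b_{11}b_{22}-b_{12}b_{21}$ are smooth near the candidate value $r_1^{**}$. Write the roots of \eqref{character_bifur_refuge} as $\lambda_{1,2}(r)=\vartheta(r)\pm i\varpi(r)$. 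Hypothesis (i), $A_1(r_1^{**})=0$, collapses \eqref{character_bifur_refuge} to $\lambda^2+B_1(r_1^{**})=0$; together with hypothesis (ii), $B_1(r_1^{**})>0$, this yields the purely imaginary pair $\lambda_{1,2}(r_1^{**})=\pm i\sqrt{B_1(r_1^{**})}$, so $\vartheta(r_1^{**})=0$ and $\varpi(r_1^{**})=\sqrt{B_1(r_1^{**})}\neq 0$.

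Next I would establish the transversality condition exactly as before: substitute $\lambda(r)=\vartheta(r)+i\varpi(r)$ into \eqref{character_bifur_refuge}, differentiate in $r$, and separate real and imaginary parts to obtain the linear system
\begin{align*}
\dot\vartheta(r)\,Z_1(r)-\dot\varpi(r)\,Z_2(r)+Z_3(r)&=0,\\
\dot\vartheta(r)\,Z_2(r)+\dot\varpi(r)\,Z_1(r)+Z_4(r)&=0,
\end{align*}
with $Z_1(r)=2\vartheta(r)+A_1(r)$, $Z_2(r)=2\varpi(r)$, $Z_3(r)=\dot A_1(r)\vartheta(r)+\dot B_1(r)$, $Z_4(r)=\dot A_1(r)\varpi(r)$, the dots denoting $d/dr$. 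Eliminating $\dot\varpi$ gives
$$\frac{d}{dr}\operatorname{Re}\lambda_i(r)\Big|_{r=r_1^{**}}=\dot\vartheta(r_1^{**})=-\frac{Z_1(r_1^{**})Z_3(r_1^{**})+Z_2(r_1^{**})Z_4(r_1^{**})}{Z_1^2(r_1^{**})+Z_2^2(r_1^{**})}.$$
Since $Z_1(r_1^{**})=0$ and $Z_2(r_1^{**})=2\sqrt{B_1(r_1^{**})}$, this simplifies after cancellation to $\dot\vartheta(r_1^{**})=-\tfrac{1}{2}\dot A_1(r_1^{**})$, so condition (iii) is precisely the requirement that this quantity be nonzero. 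Invoking the Hopf-bifurcation theorem (Theorem~\ref{Murray_birf_def}, with $a_1$ replaced by $r$) then produces a Hopf bifurcation at $E_2$ as $r$ crosses $r_1^{**}$.

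The genuinely new point relative to Theorem~\ref{hopf-bifurcationrefuge_a1} — and the step I expect to be the main obstacle — is guaranteeing that a critical value $r_1^{**}\in(0,1]$ with $A_1(r_1^{**})=0$, $B_1(r_1^{**})>0$ actually exists, and that $x_1^*=x_1^*(r)$, $x_2^*=x_2^*(r)$ remain a bona fide interior equilibrium there (recall these depend on $r$ through the predator nullcline of \eqref{refuge}). I would handle this by writing $\operatorname{tr}({\bf J_r^*})$ explicitly along the equilibrium branch, checking it is continuous in $r$ and changes sign on some subinterval of $(0,1]$ — e.g. by contrasting small $r$, where weak predation tends to make the trace positive, with $r$ near $1$, where the unrefuged stability hypotheses of Theorem~\ref{localstabintrefuge} can force it negative — and then applying the intermediate value theorem to locate $r_1^{**}$. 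The remaining items (smoothness of $A_1,B_1$, and $Z_1^2(r_1^{**})+Z_2^2(r_1^{**})=4B_1(r_1^{**})\neq 0$) are routine and parallel the proof of Theorem~\ref{hopf-bifurcationrefuge_a1}, so I would state them and omit the details.
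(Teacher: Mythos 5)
Your proposal is correct and follows essentially the same route as the paper, which simply omits this proof as being identical to that of Theorem~\ref{hopf-bifurcationrefuge_a1} with $a_1$ replaced by $r$; your mirrored computation (including the tidy simplification $\dot\vartheta(r_1^{**})=-\tfrac{1}{2}\dot A_1(r_1^{**})$) is exactly that argument. The extra worry about \emph{existence} of $r_1^{**}$ is not required, since the theorem is stated conditionally on hypotheses (i)--(iii).
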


\begin{proof}
The proof of Theorem \ref{hopf-bifurcationrefuge_r}  is similar to proof in Theorem \ref{hopf-bifurcationrefuge_a1} and omitted for brevity.
\end{proof}

%%%%%%%%%%  Numerical Simulations

\section{Numerical Simulations}\label{section:Numerical_simulations}
Numerical simulations of model \eqref{EquationMain} are performed in this section to correlate with some of  our key analytical findings. The numerical simulations and figures have been developed using MATLAB$\circledR$ R2019b, MATCONT\cite{Matcont}, and XPPAUT\cite{Ermentrout02}. For convenience, the parameters used in simulations are given in Table \ref{table:Paramset2}.

%%%%%%%%%%%%%%%% Table 2
\begin{table}[h!]
  \begin{center}
    \caption{Parameters used in the simulations of Figs. \ref{fig:nullcline_manifold1}, \ref{fig:nullcline_manifold2}, \ref{fig:bifurcation1}. \ref{fig:time_phase1}, \ref{fig:bifurcation2}, and \ref{fig:TB_Hopf_bifurcation_refuge}.}
    \label{table:Paramset2}
    \begin{tabular}{@{}l l@{}l l@{}l l@{}l @{} l@{}}
    %{*{2}{c}}
      \midrule
&$a_1$                 = $0.6$ \quad \quad \quad
&$a_2$                 = $1$ \quad \quad \quad
&$b_1$                 = $0.063$ \quad \quad \quad
&$w_0$              = $1$ \quad \quad \quad
&$d$              = $2$  \quad \quad \quad \\
&$w_1$                     = $2$ \quad \quad \quad
&$m_1$                     = $0.8$  \quad \quad \quad
&$m_2$                     = $1$  \\
      \bottomrule
    \end{tabular}
  \end{center}
\end{table}
%%%%%%%%%%%%%%%%%%%%
%
For the parameter values in Table \ref{table:Paramset2}, the predator-free equilibrium point $E_1(9.52381,0)$ is a saddle and $E_2(1.45094,1.47587)$ is a repeller, see Fig. \ref{fig:nullcline_manifold1}(a). In  Fig. \ref{fig:nullcline_manifold1}(a), $W^{s}(E_0)$ is above $W^{u}(E_1)$ and  $E_2(1.45094,1.47587)$ is surrounded by a limit cycle. It is seen that in Fig. \ref{fig:nullcline_manifold1}(b), when $a_1=2$, $b_1=0.21$ and all the other parameter sets are given in Table \ref{table:Paramset2}, then $W^{s}(E_0)$ is under $W^{u}(E_1)$, where $E_1(9.52381,0)$ and $E_2(1.45094,4.91957)$. So the predator-free equilibrium point $E_1(9.52381,0)$ is a saddle and $E_2(1.45094,4.91957)$ is a repeller. Here all positive solutions converge towards $E_0$. In  Fig. \ref{fig:nullcline_manifold2}, when $a_1=1.5$, $b_1=0.18$, $a_2=1.4$ and all the other parameter sets are given in Table \ref{table:Paramset2}, we observe that $E_2(3.55994,4.36963)$ is an attractor and $E_1(8.33333,0)$ is a saddle. Also $W^{s}(E_0)$ is above $W^{u}(E_1)$ and the model \eqref{EquationMain} has a unique limit cylce whose basin of attraction is the region under $W^s(E_0)$.
However, in n Fig. \ref{fig:nullcline_manifold2}, the numerical simulations illustrate that $E_2$ is not globally asymptotically stable for the given parameter set.

%%%%%%%% Stable and Manifold for m_1<1 and m_2=1
\begin{figure}[!htb]
\begin{center}
\subfigure[]{
    \includegraphics[scale=.13]{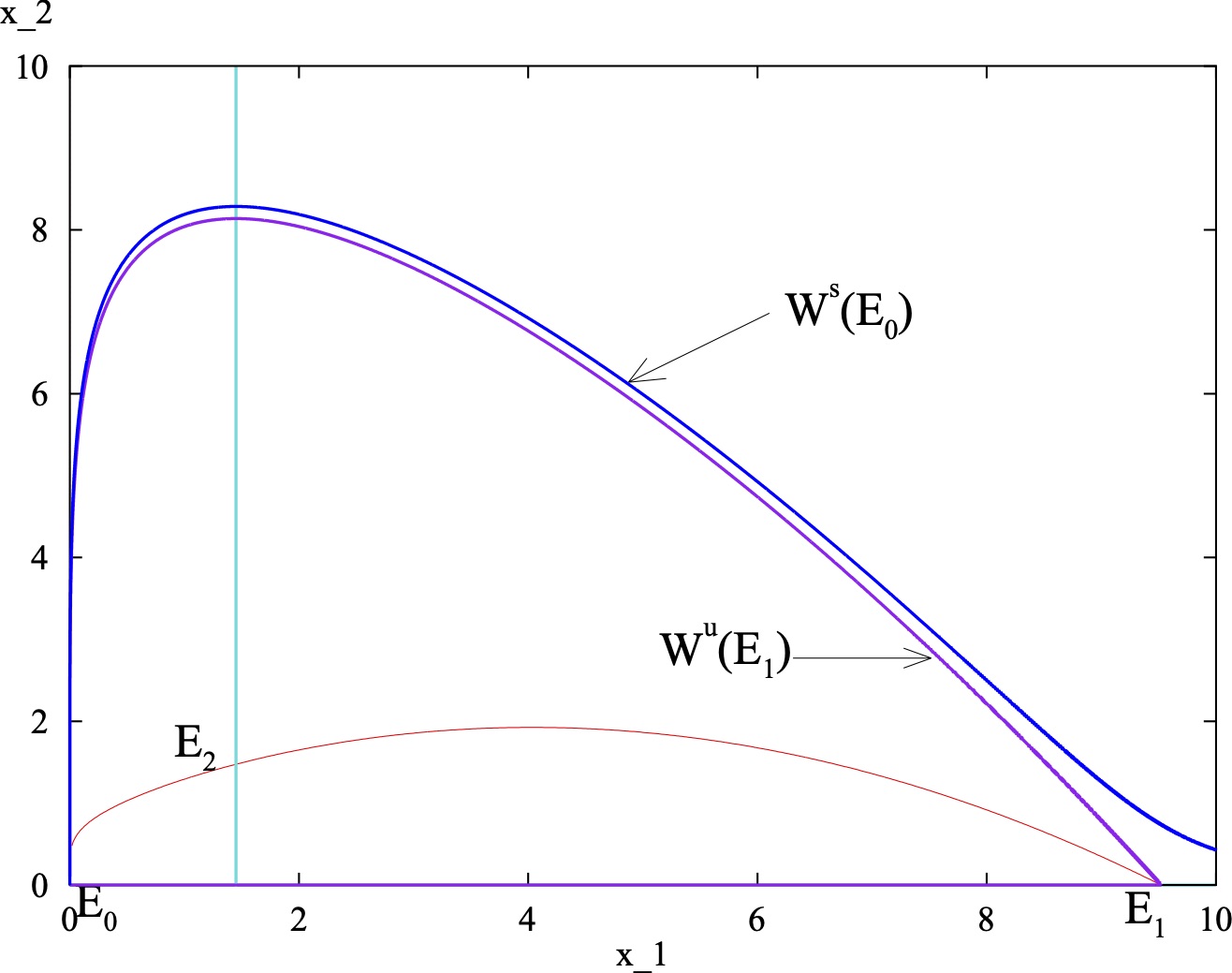}}
\subfigure[]{
    \includegraphics[scale=.13]{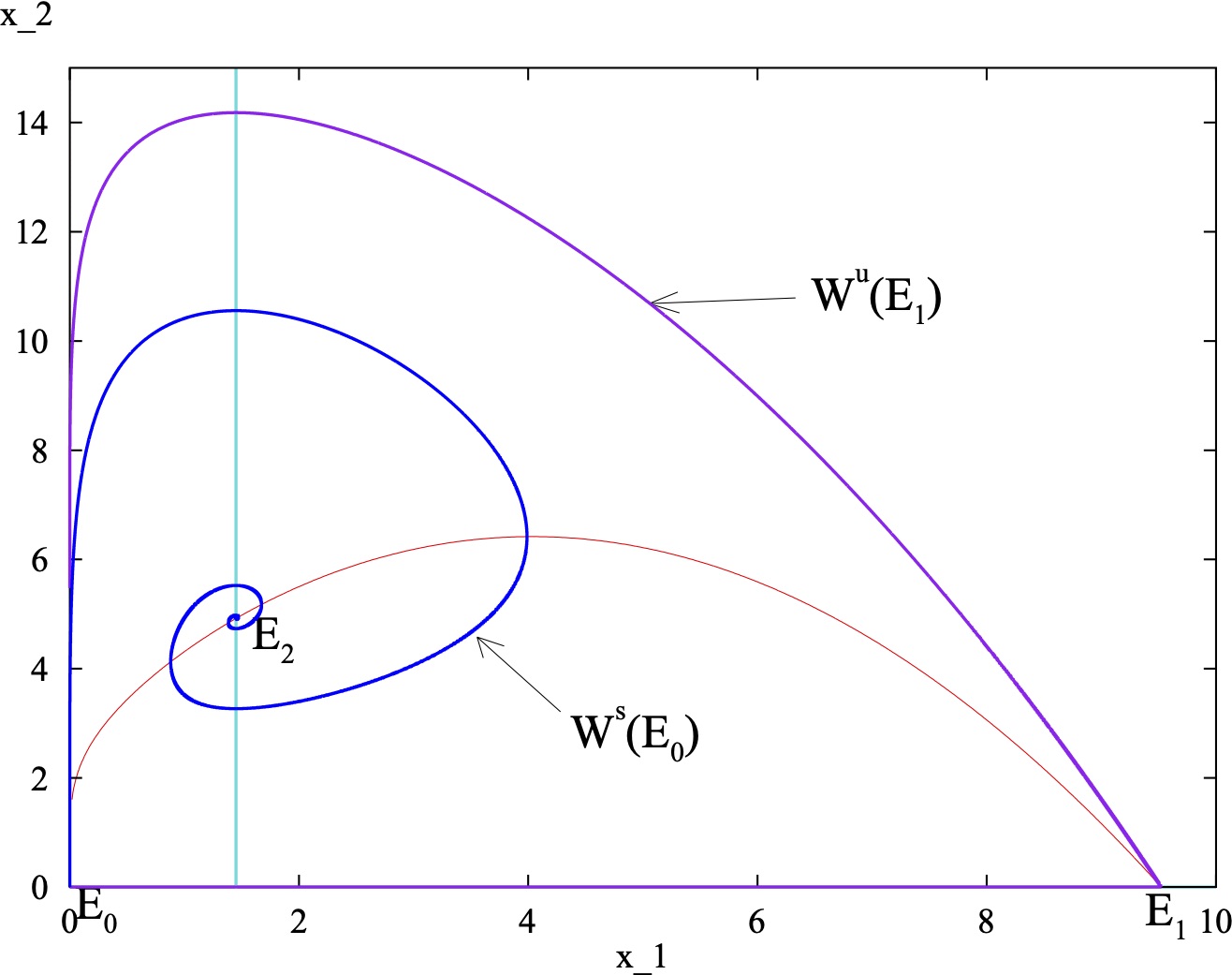}}
\end{center}
 \caption{The predator and prey nullclines for model \eqref{EquationMain} are represented by turquoise and red respectively. (a) $W^{s}(E_0)$ is above $W^{u}(E_1)$: $E_2$ is unstable (b) $W^{u}(E_1)$ is above $W^{s}(E_0)$, here $a_1=2$ and $b_1=0.21$: $E_2$ is unstable.  Other parameter sets are given in Table \ref{table:Paramset2}. }
      \label{fig:nullcline_manifold1}
\end{figure}
%%%====
%%%%%%%%%%%%%%%%%%%%% Stable and Unstable Manifold
\begin{figure}[!htb]
\begin{center}
    \includegraphics[scale=.14]{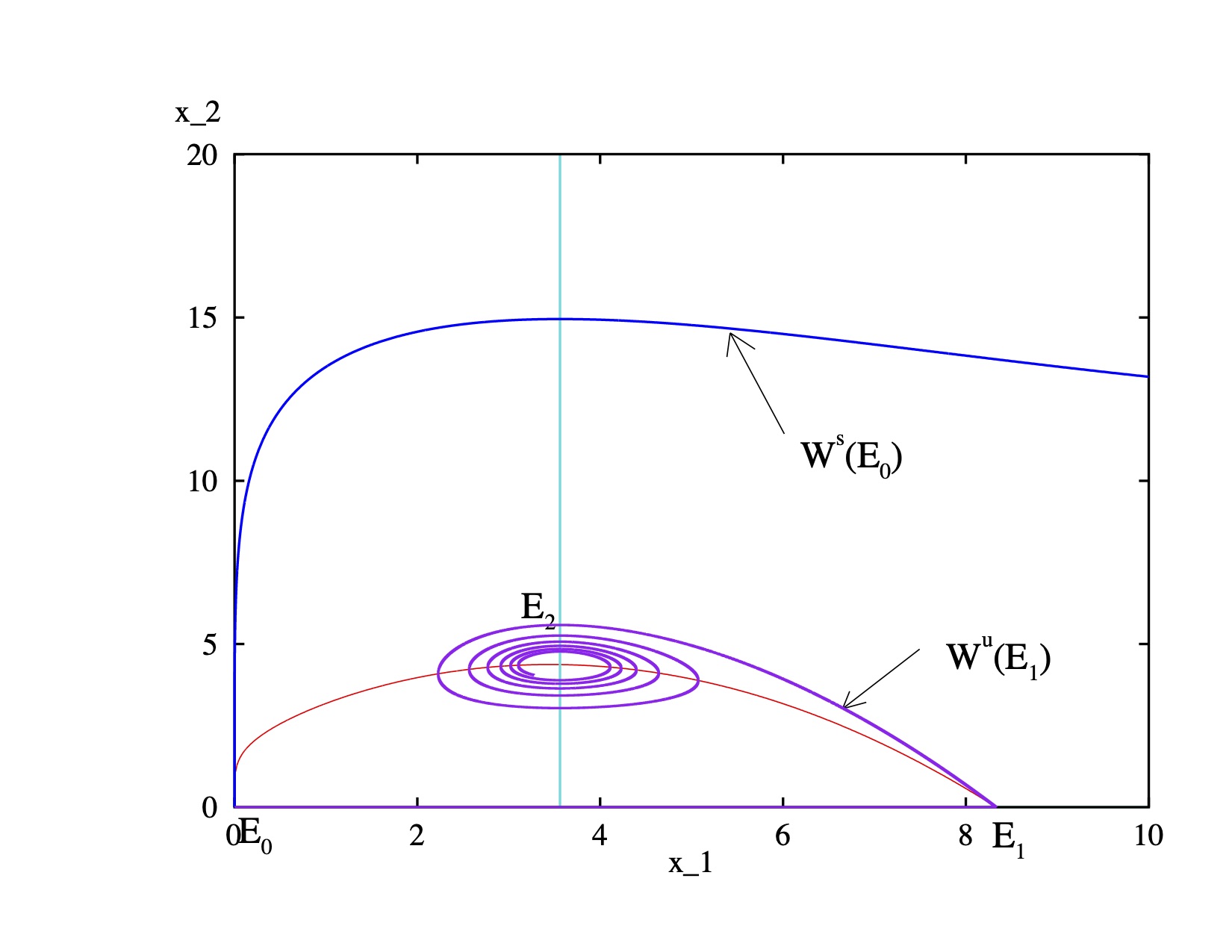}
\end{center}
 \caption{ $E_2$ is an attractor and $W^{s}(E_0)$ is above $W^{u}(E_1)$. Here $a_1=1.5$, $b_1=0.18$ and $a_2=1.4$. Other parameter sets are given in Table \ref{table:Paramset2}.  }
      \label{fig:nullcline_manifold2}
\end{figure}
%%%====
%%%%%%%%%%%  Hopf Bifurcation for parameter a_1
\begin{figure}[H]
\begin{center}
%\subfigure[]{
   \includegraphics[scale=.13]{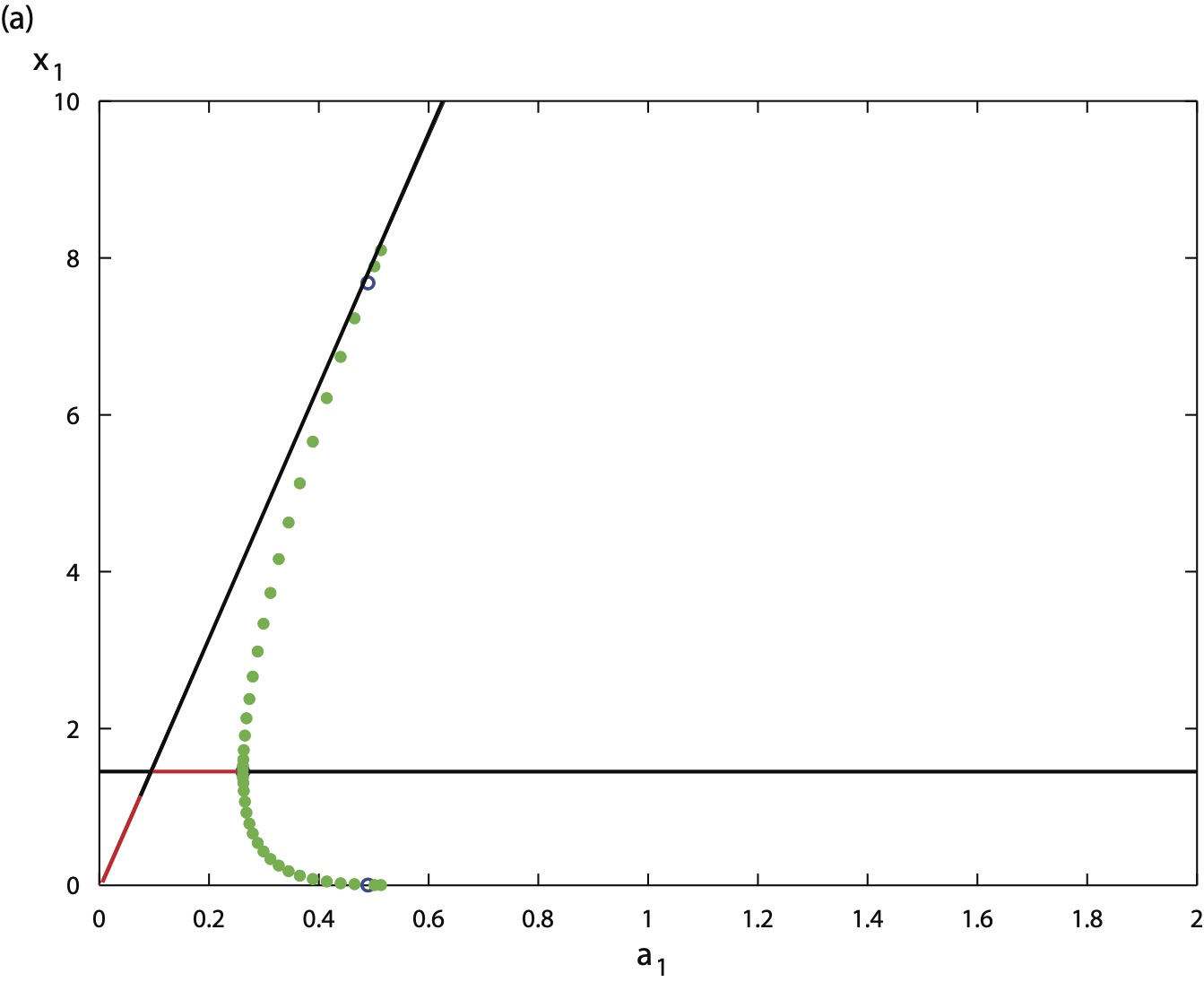}
%\subfigure[]{
    \includegraphics[scale=.13]{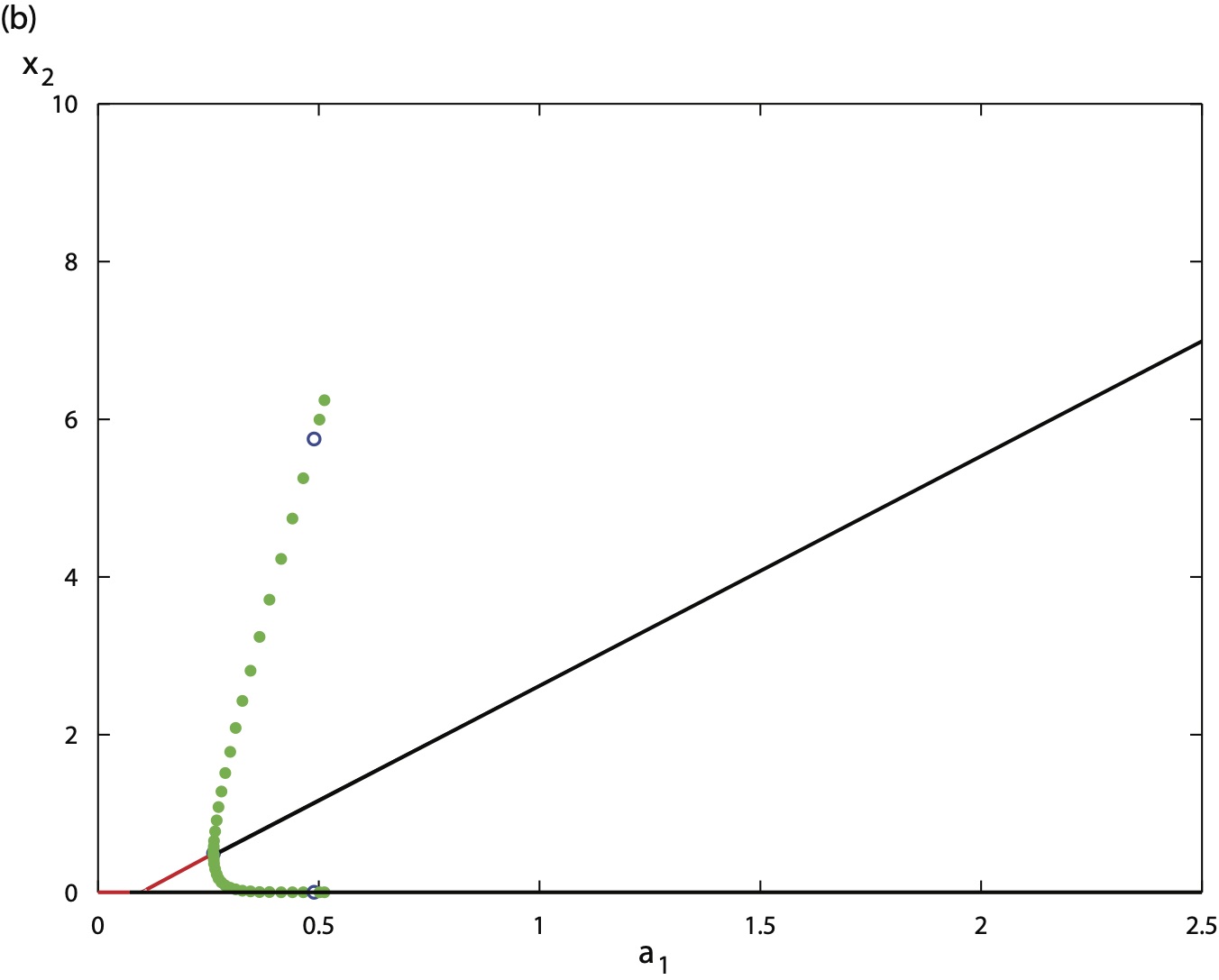}
\end{center}
 \caption{Bifurcation diagrams of the model \eqref{EquationMain}, as $a_1$ changes. The stable and unstable interior equilibriums are given by the lines in red and black, respectively. The solid circles (green) represent stable limit cycles and the open circles (blue) represent unstable
limit cycles. (a) prey ($x_1$) (b) predator ($x_2$). Parameter set are given in Table \eqref{table:Paramset2}. }
      \label{fig:bifurcation1}
\end{figure}
%%%%%%%
%%%====
%%%%%%%%%%%%%%%%%%%%%%%%%%%%%%%%%%%%%%%%%%%%%%%%%%%%%%%%%%%
Furthermore, for the parameter sets in Table \ref{table:Paramset2}, we employ AUTO as implemented in the continuation software XPPAUT to analyze the bifurcation diagrams of the model \eqref{EquationMain} in  Fig. \ref{fig:bifurcation1}. The model undergoes Hopf-bifurcation around $E_2(1.45094,0.49456)$ as the parameter  $a_1$ crosses its critical value $a_1^*=0.261835$. The branch of periodic orbits emitting from $a_1^*$ are stable and the first Lyapunov coeffiicient \cite{Perko13}, $\sigma=-1.49929e^{-2}<0$ (obtained with the aid of MATCONT), hence the Hopf-bifurcation is supercritical.
%
%%%%%%%%%%%% Table 3
\begin{table}[h!]
  \begin{center}
    \caption{Parameters used in the simulations of Figs. \ref{fig:time_series_SN}, \ref{fig:SN_Main}, \ref{fig:time_series_refuge_SN}, and \ref{fig:SN_refuge}.}
    \label{table:Paramset3}
    \begin{tabular}{@{}l l@{}l l@{}l l@{}l @{} l@{}}
    %{*{2}{c}}
      \midrule
&$a_1$                 = $0.5$ \quad \quad \quad
&$a_2$                 = $0.7$ \quad \quad \quad
&$b_1$                 = $0.05$ \quad \quad \quad
&$w_0$              = $0.2$ \quad \quad \quad
&$d$              = $0.2$  \quad \quad \quad \\
&$w_1$                     = $4$ \quad \quad \quad
&$m_1$                     = $0.5$  \quad \quad \quad
&$m_2$                     = $0.5$  \\
      \bottomrule
    \end{tabular}
  \end{center}
\end{table}
%%%%%%%%%%%%%%%%%%%%
%%%====
%%%%%%%%%%%%%%%%%%%%% time seris for saddle_node
\begin{figure}[H]
\begin{center}
    \includegraphics[scale=.27]{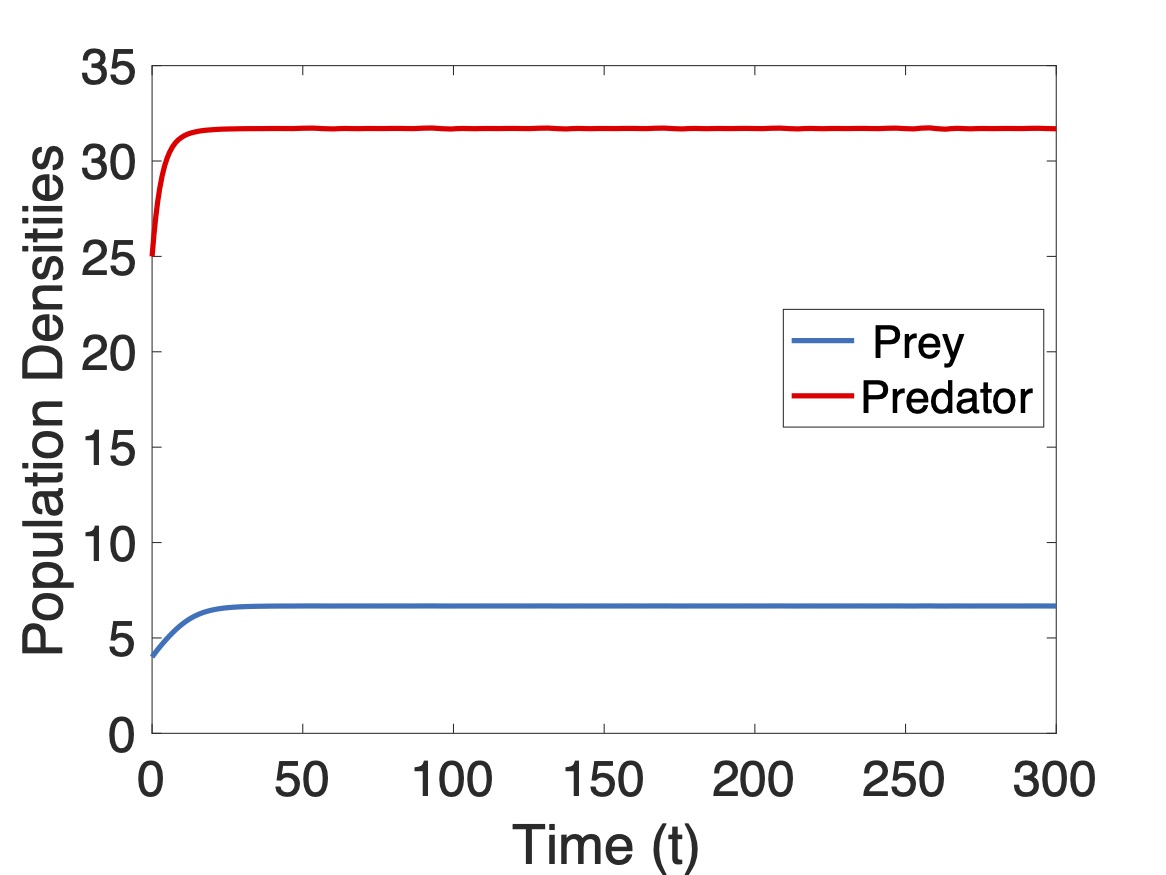}
\end{center}
 \caption{Time series depicting the stability behavior of the interior equilibrium point $(6.67563,31.7032)$ for the  parameter sets given in Table \ref{table:Paramset3}.  }
      \label{fig:time_series_SN}
\end{figure}
%
%%%%%%%%%%%%%%%%%%%%% Saddle-Node bifurcation
\begin{figure}[!htb]
\begin{center}
    \subfigure[]{
    \includegraphics[scale=.095]{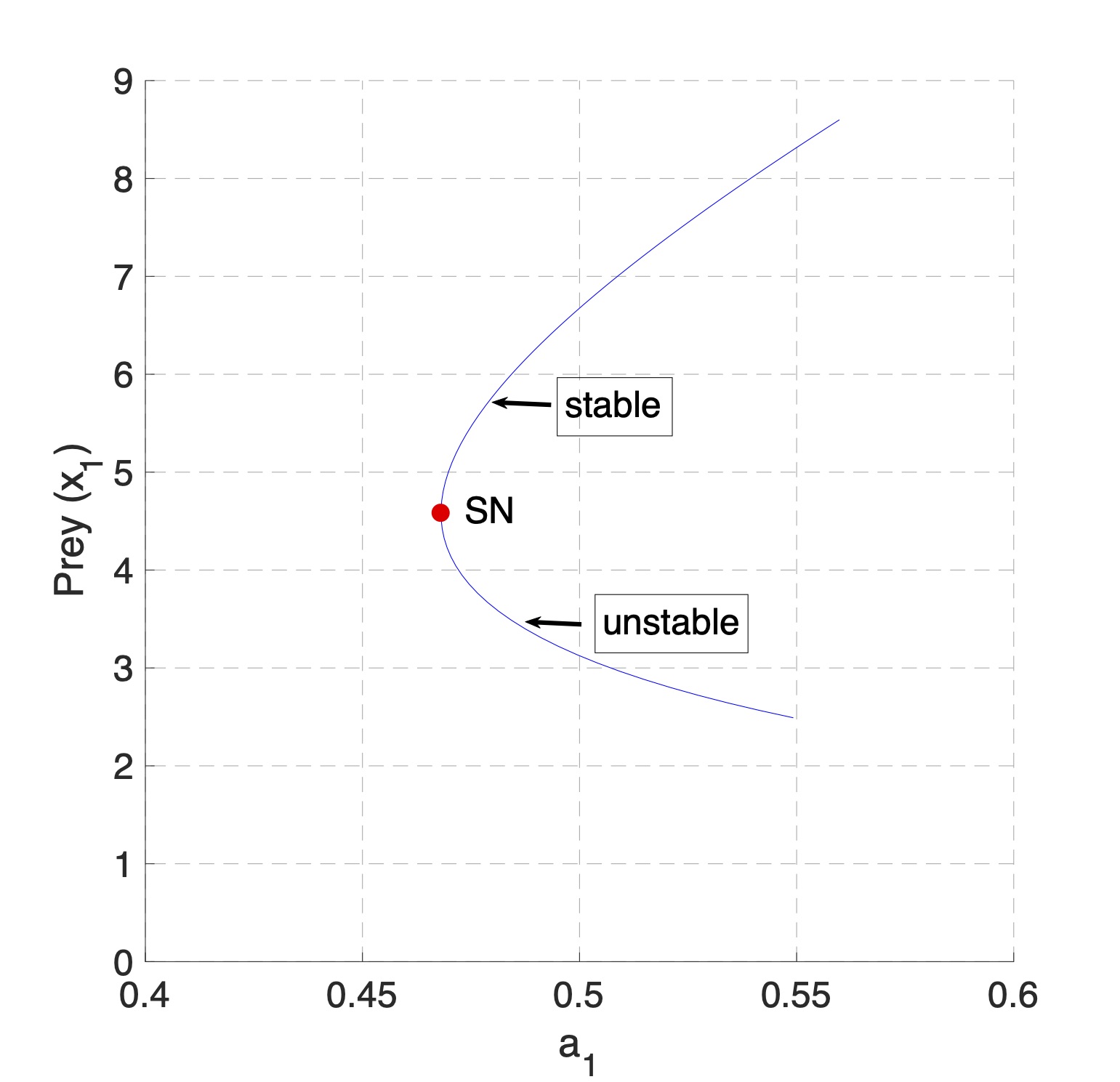}}
    \subfigure[]{
    \includegraphics[scale=.095]{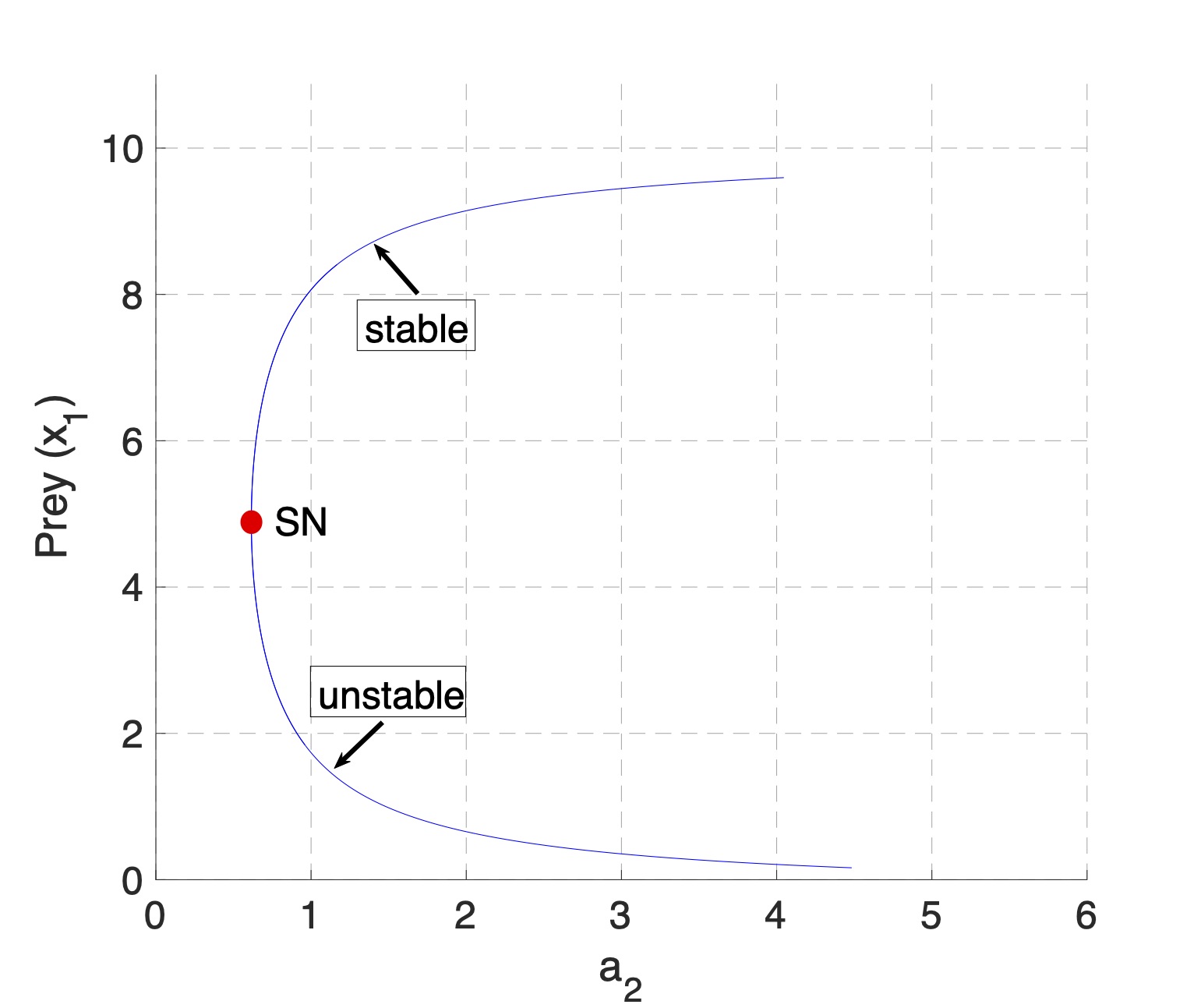}}
    \subfigure[]{
    \includegraphics[scale=.090]{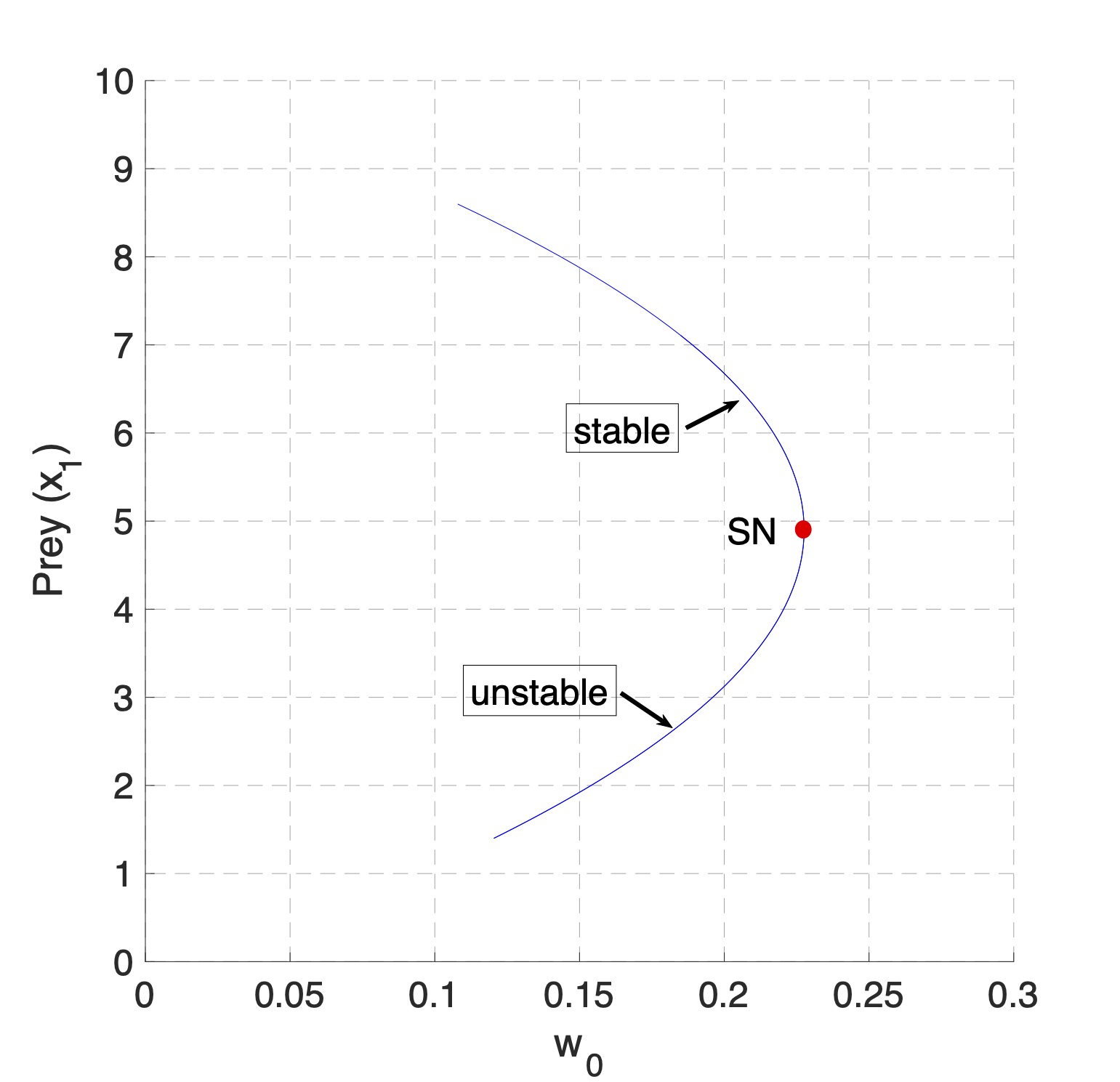}}
   \subfigure[]{
    \includegraphics[scale=.085]{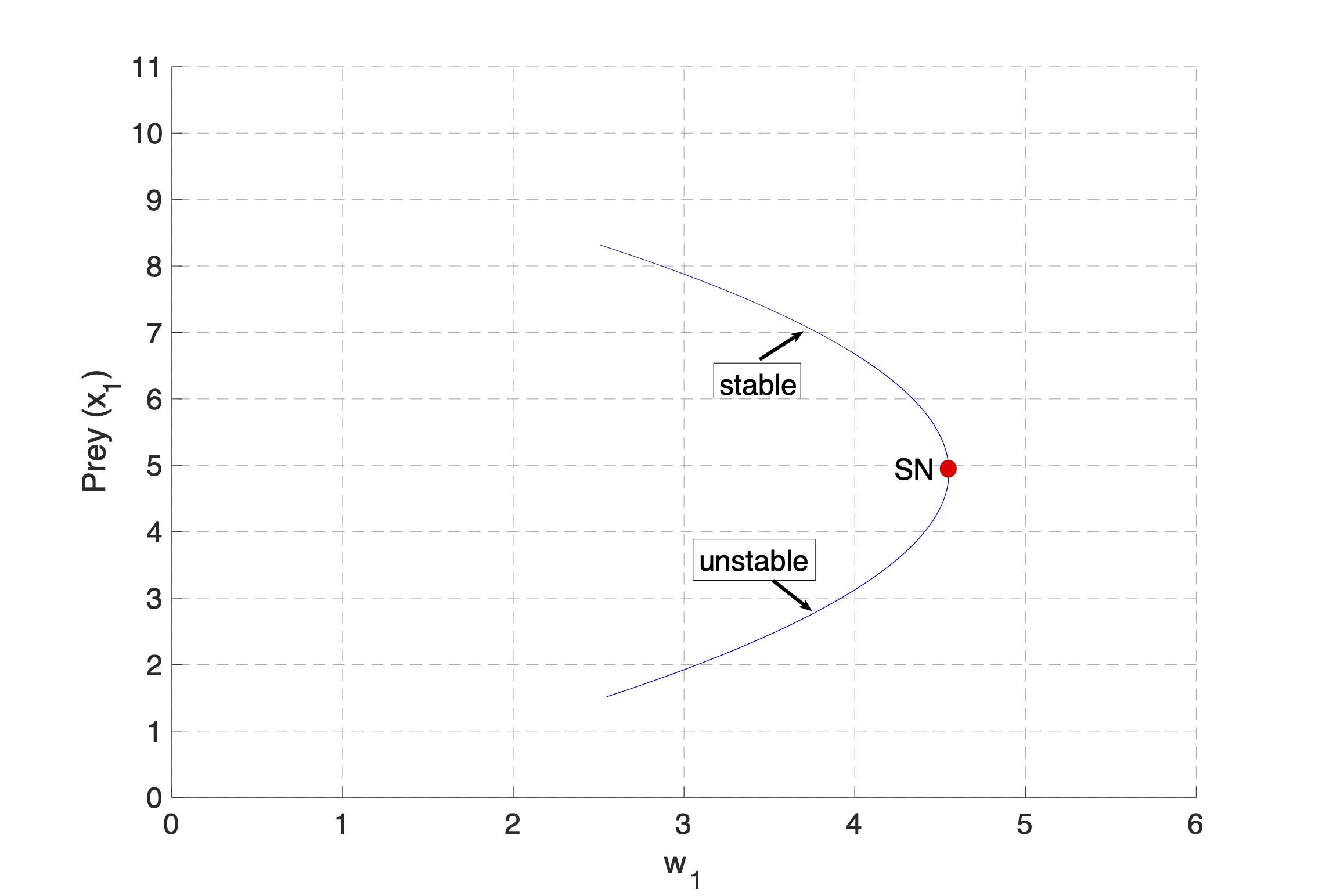}}
  \subfigure[]{
    \includegraphics[scale=.095]{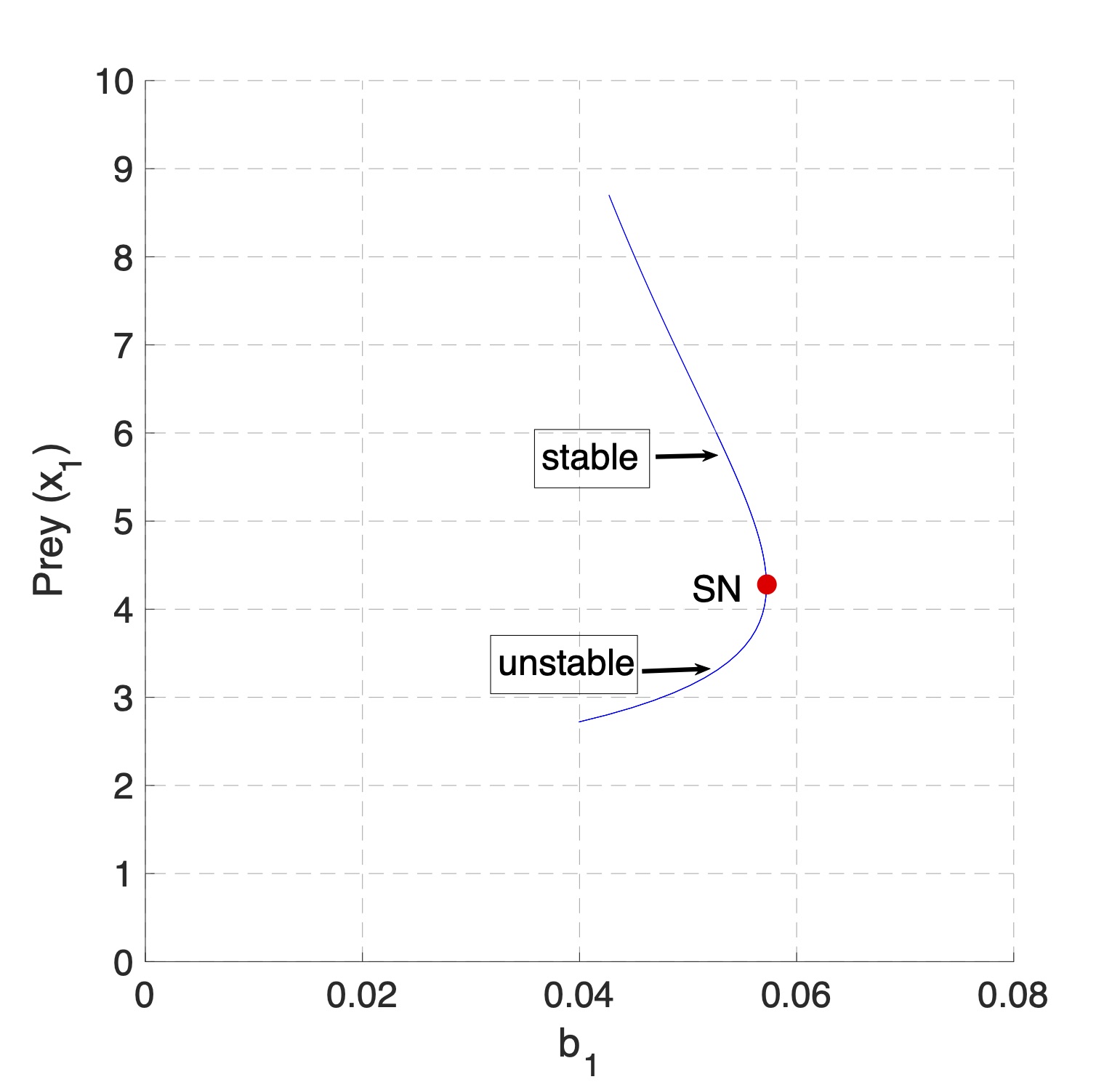}}
\end{center}
 \caption{Bifurcation diagrams illustrating (a) SN at $a_1=a_1^*=0.46809$, (b) SN at $a_2=a_2^*=0.61515$, (c) SN at $w_0=w_0^*=0.22759$, (d) SN at $w_1=w_1^*=4.55175$,  (e) SN at $b_1=b_1^*=0.05722$. Other parameter sets are given in Table \ref{table:Paramset3}. (SN: Saddle-node bifurcation.)  }
      \label{fig:SN_Main}
\end{figure}
%%%%%%%%%%%%%%%%%%%%%%%%%%%%%%%%%%%%%%%%%%%%%%%%%%%%%%%%%%%
%%%====

Also, for the parameter sets in Table \ref{table:Paramset3} of model \eqref{EquationMain}, we obtain the following two interior equilibrium points $E_2^1(3.12437,30.6886)$ and $E_2^2(6.67563,31.7032)$ and the predator-free equilibrium point is $E_1(10,0)$. The eigenvalues associated with $E_2^1(3.12437,30.6886)$ are $-0.343043$ and $0.170265$, hence $E_2^1$ is a saddle. The eigenvalues associated with $E_2^2(6.67563,31.7032)$ are $-0.34517$ and $-0.17481$, hence $E_2^2$ is locally asymptotically stable, see Fig. \ref{fig:time_series_SN}. We note here that, $E_1(10,0)$ cannot be analyzed using the linear stability method since $m_1=m_2=0.5<1$. We observed that the model \eqref{EquationMain} undergoes a saddle-node bifurcation around $E_2(x_1^*,x_2^*)$ when the following bifurcation parameters $a_1,a_2,w_0,w_1$ and $b_1$ crosses their corresponding critical values $a_1^*=0.46809,a_2^*=0.61515,w_0^*=0.22759,w_1^*=4.55175,$ and $b_1^*=0.05722$ respectively. The saddle-node bifurcation diagrams are depicted in Fig. \ref{fig:SN_Main}

%%%%%%% Simulations for the introduction of refuge
~\\
Now, we perform numerical simulations of model \eqref{refuge} to verify  some of our analytical results. For $r=0.3$ and all other parameter values given in Table \ref{table:Paramset2}, the predator-free equilibrium point $E_1(9.52381,0)$ is a saddle and $E_2(4.83648,9.52147)$ is an attractor (stable), see Fig. \ref{fig:time_phase1}.
By introducing a prey refuge of $r=0.3$, we observed in Fig. \ref{fig:time_phase1}(c) that $W^{s}(E_0)$ is above $W^{u}(E_1)$ as compared to Fig. \ref{fig:nullcline_manifold1}(b) where $W^{s}(E_0)$ is below $W^{u}(E_1)$. Thus the stability of the interior equilibrium is altered and not all positive solutions tend toward $E_0$.

%%%%%%%%%%% Effect of prey refuge
\begin{figure}[H]
\begin{center}
\subfigure[]{
   \includegraphics[scale=.145]{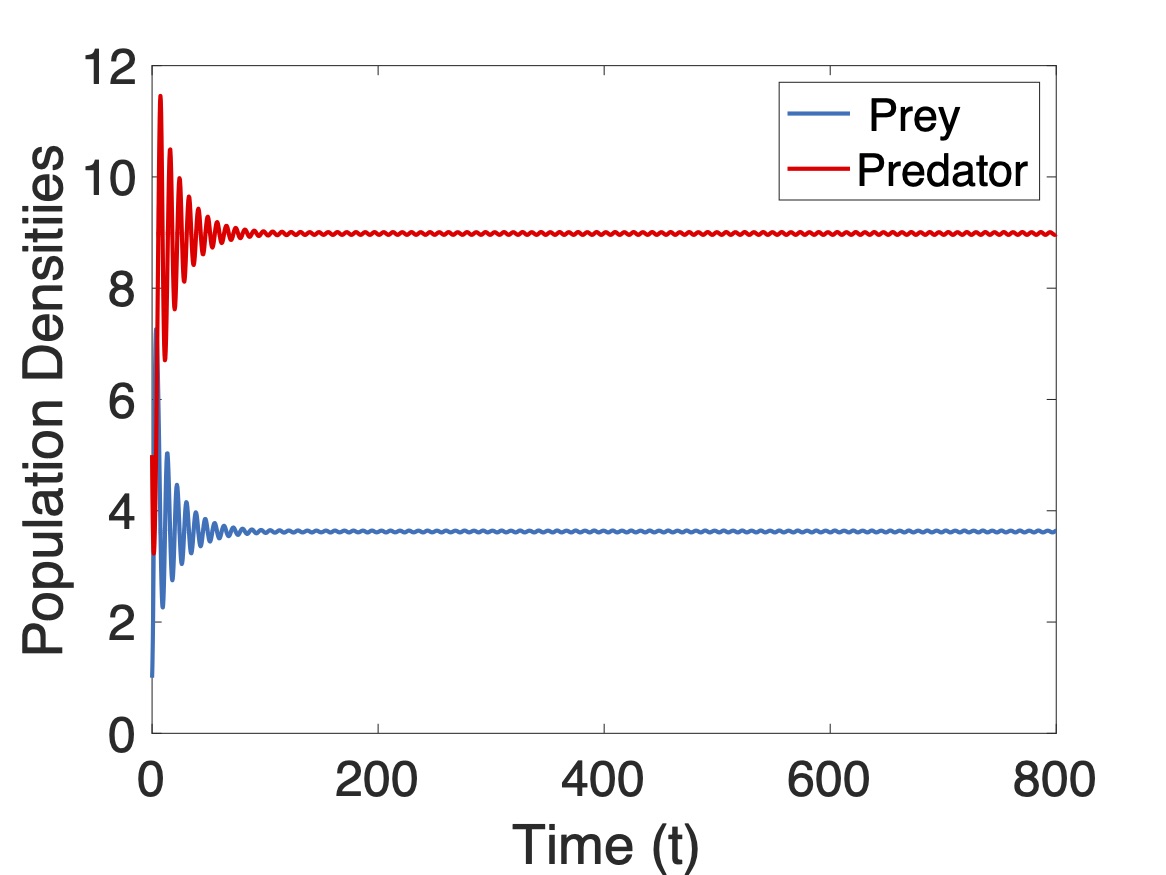}}
\subfigure[]{
    \includegraphics[scale=.145]{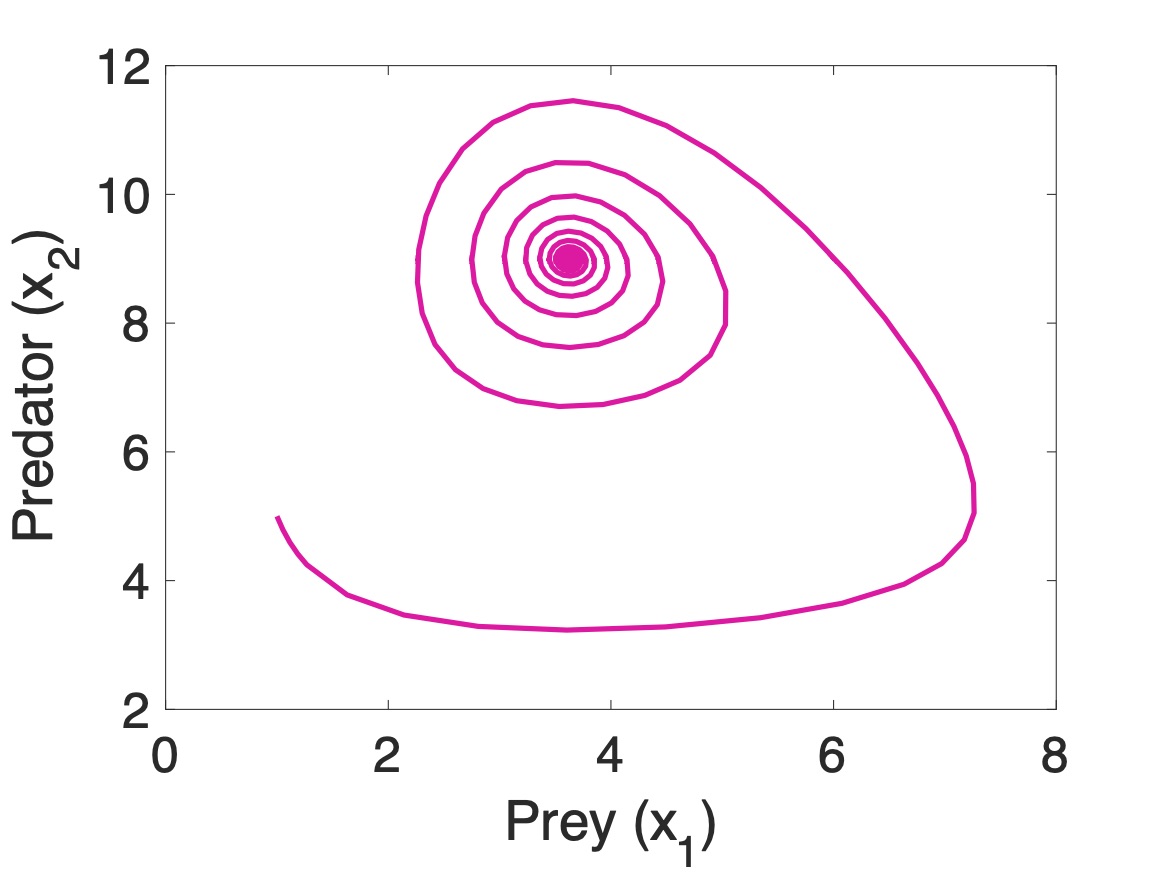}}
    \subfigure[]{
    \includegraphics[scale=.13]{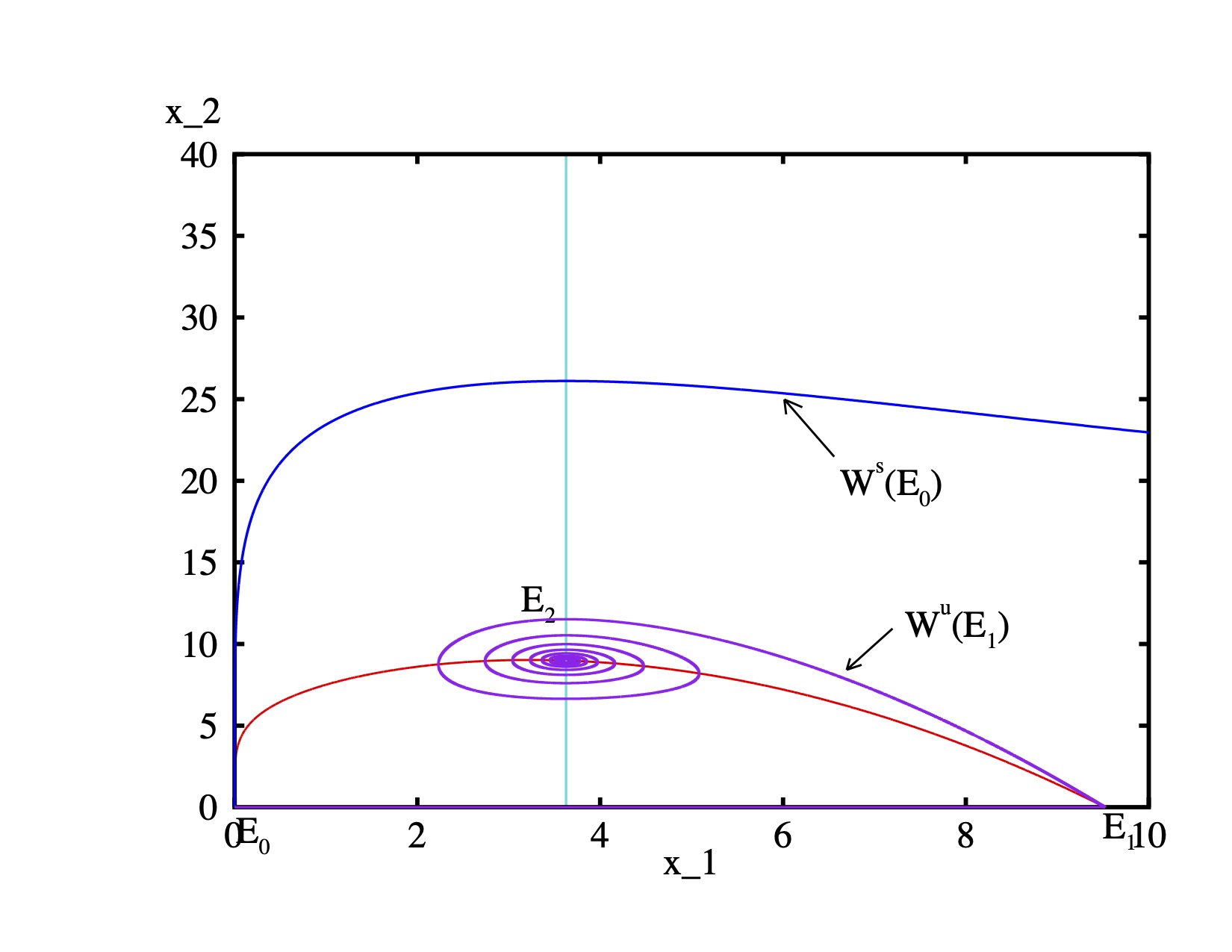}}
\end{center}
 \caption{Stable dynamics (a) time series (b) phase diagram of the interior equilibrium point ($4.83648,9.52147$) (c) $E_2$ is an attractor and $W^s(E_0)$ is above $W^u(E_1)$. Here $r=0.4$ and all other parameter sets are given in Table \ref{table:Paramset2}}
      \label{fig:time_phase1}
\end{figure}
%%%%%%%

%%%%%%%%%%%  Hopf Bifurcation for parameter a_1 for refuge
\begin{figure}[H]
\begin{center}
\subfigure[]{
   \includegraphics[scale=.10]{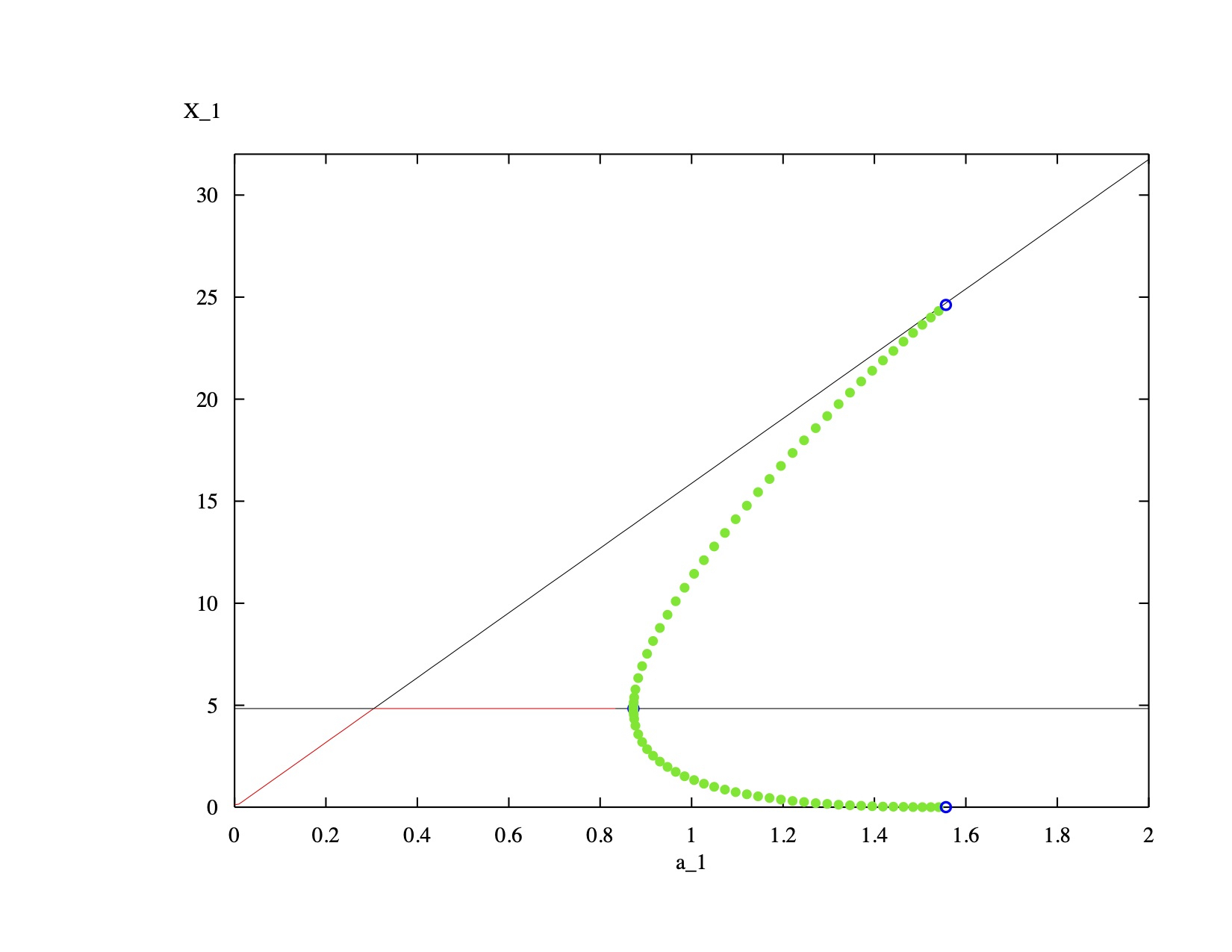}}
\subfigure[]{
    \includegraphics[scale=.10]{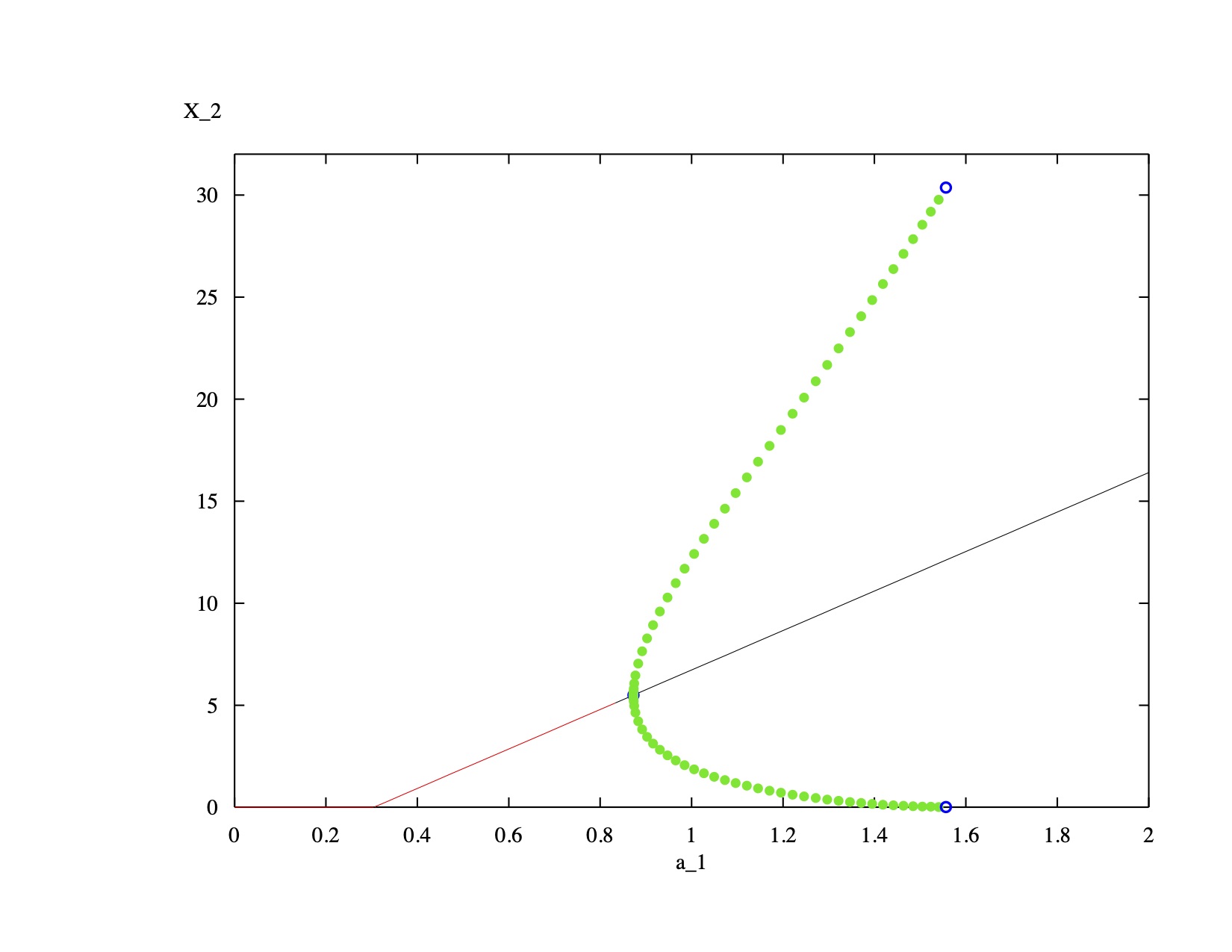}}
\end{center}
 \caption{Bifurcation diagrams of the model \eqref{refuge}, as $a_1$ crosses its critical value $a_1^*$. The stable and unstable interior equilibriums are given by the lines in red and black, respectively. The solid circles (green) represent stable limit cycles and the open circles (blue) represent unstable
limit cycles. (a) prey ($x_1$) (b) predator ($x_2$).  Parameter set are given in Table \eqref{table:Paramset2}.}
      \label{fig:bifurcation2}
\end{figure}
%%%%%%%
%%%%%%%%%%% Trancritical and Hopf Bifurcation for parameter r
\begin{figure}[H]
\begin{center}
\subfigure[]{
   \includegraphics[scale=.10]{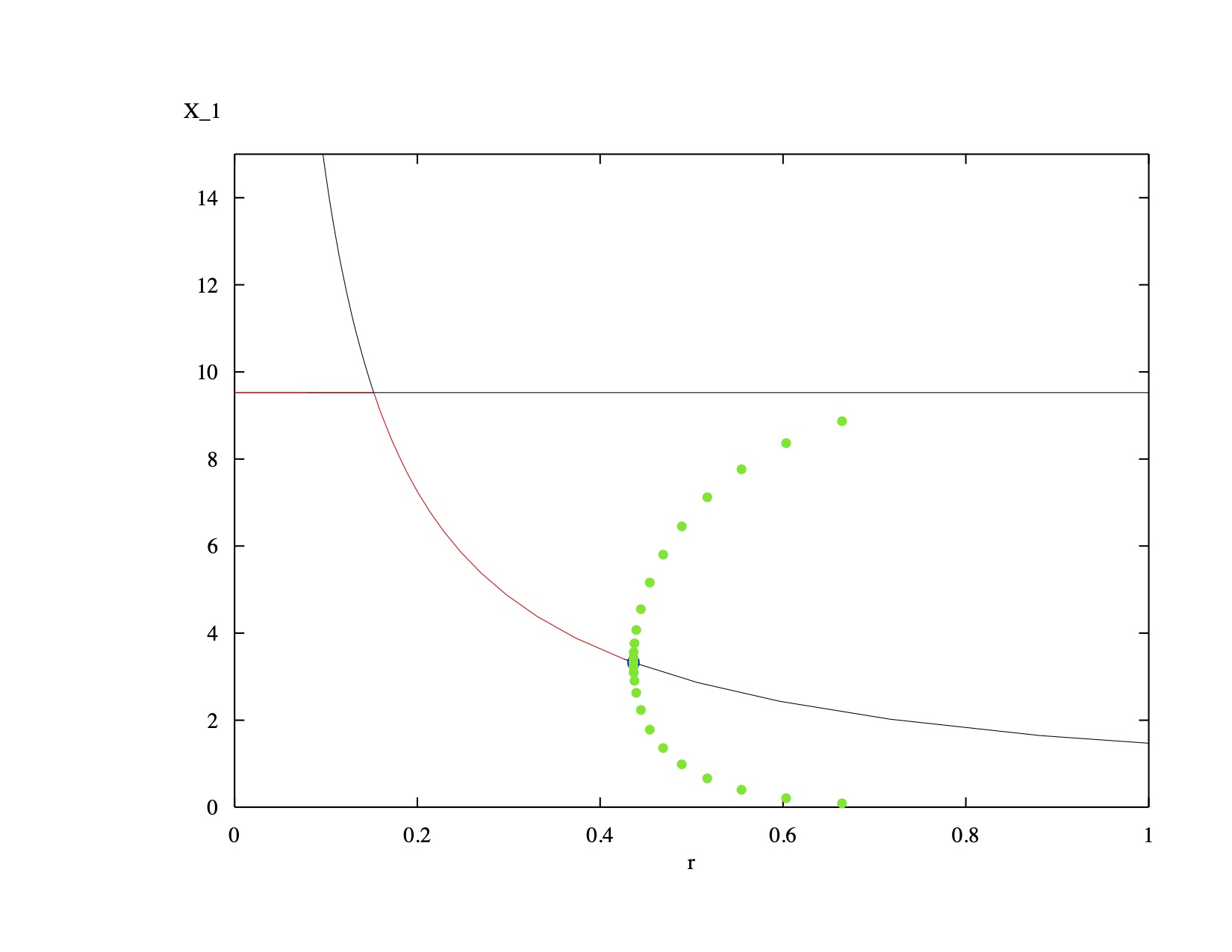}}
\subfigure[]{
    \includegraphics[scale=.10]{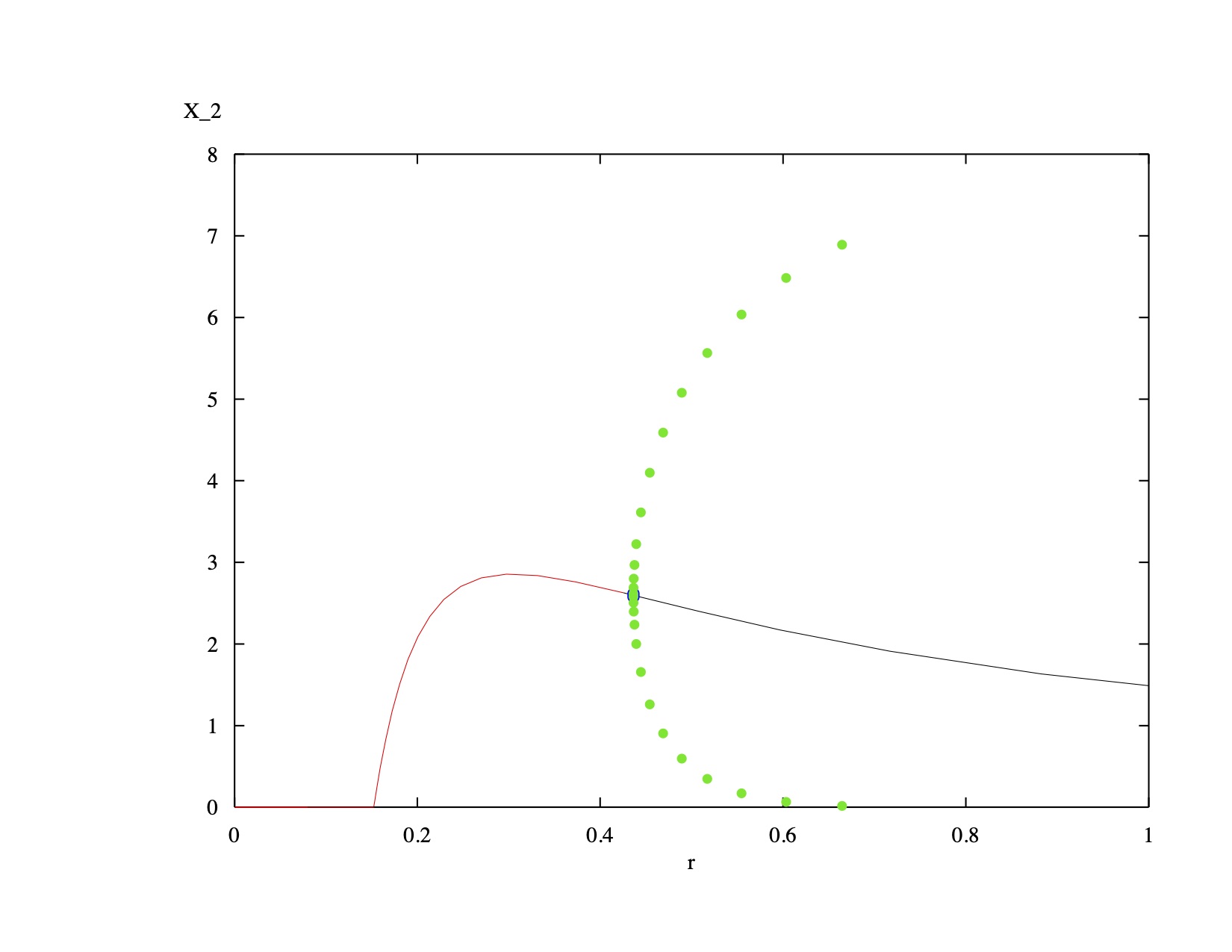}}
\end{center}
 \caption{ Bifurcation diagrams of the model \eqref{refuge}, as $r$ crosses its critical value $r_1^*$. The stable and unstable interior equilibriums are given by the lines in red and black, respectively. The solid circles (green) represent stable limit cycles. (a) prey ($x_1$) (b) predator ($x_2$).  Parameter set are given in Table \eqref{table:Paramset2}.}
      \label{fig:TB_Hopf_bifurcation_refuge}
\end{figure}
%%%%%%%
%%%--------

%%%--------
Additionally, for $r=0.3$ and all the other parameter sets provided in Table \ref{table:Paramset2}, we use  AUTO as implemented in the continuation software XPPAUT to analyze the bifurcation diagrams of the model \eqref{refuge} in  Fig. \ref{fig:bifurcation2}. The model undergoes Hopf-bifurcation  around $E_2(4.83648,5.49507)$  as the parameter  $a_1$ crosses its critical value $a_1^*=0.87278$. The branch of periodic orbits emitting from $a_1^*$ are stable and the first Lyapunov coeffiicient \cite{Perko13}, $\sigma=-2.88256e^{-3}<0$, hence the Hopf-bifurcation is supercritical.

Furthermore, the model undergoes Hopf-bifurcation around $E_2(3.32486,2.59694)$ as the parameter $r$ crosses its critical value $r_1^{**}=0.43639$, see Fig. \ref{fig:TB_Hopf_bifurcation_refuge}. The branch of periodic orbits bifurcation from $r_1^{**}$ are stable and the first Lyapunov coefficient is $\sigma=-4.99384e^{-3}$, hence supercritical. Also, the model \eqref{refuge} undergoes transcritical bifurcation around $E_1(9.52381,0)$ when the parameter $r$ crosses its threshold $r_1^{*}=0.15239$, see Fig. \ref{fig:TB_Hopf_bifurcation_refuge}(a) .
%
%%%%%%%%%
\begin{figure}[H]
\begin{center}
    \includegraphics[scale=.20]{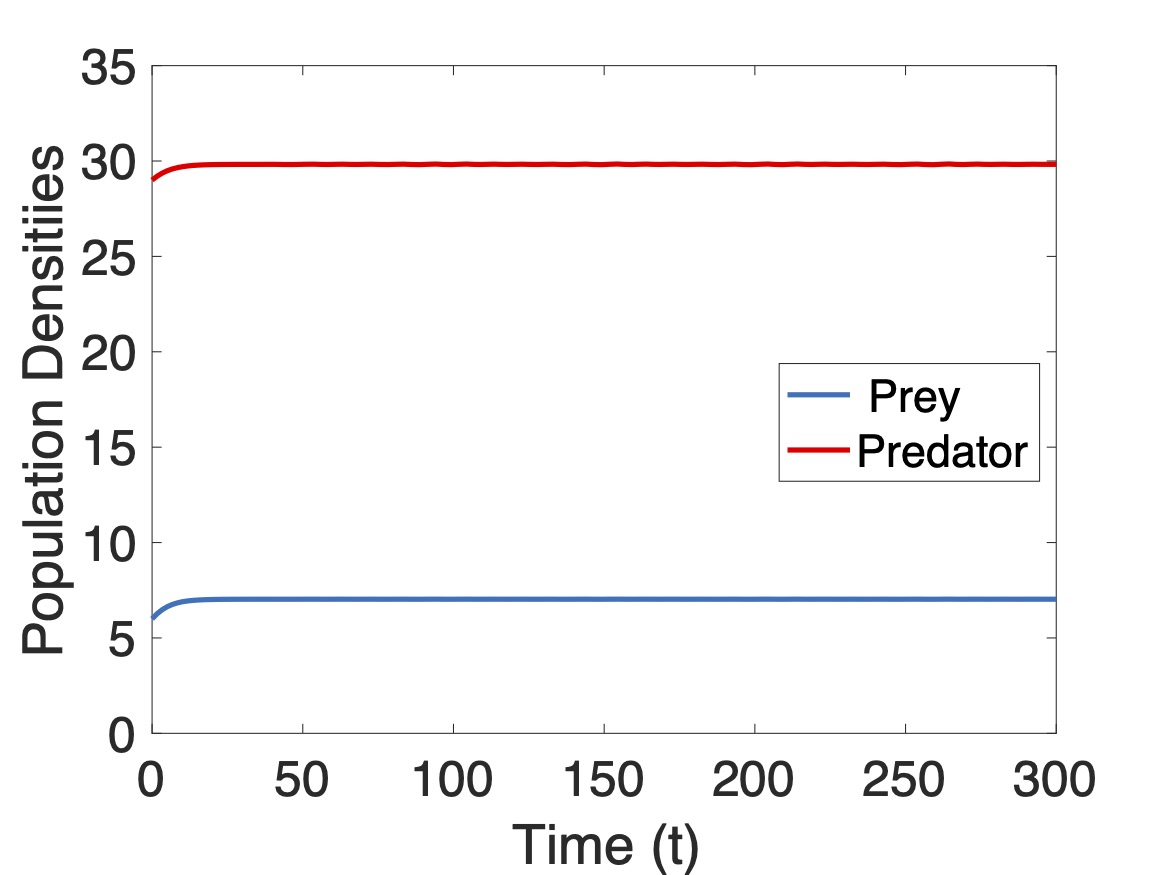}
\end{center}
 \caption{Time series depicting the stability behavior of the interior equilibrium point $(7.03041,29.8249)$ for $r=0.3$ and all other  parameter sets are given in Table \ref{table:Paramset3}.  }
      \label{fig:time_series_refuge_SN}
\end{figure}
%%%%------
%%%%%%%%%%%%%%%%%%%%% Saddle-Node bifurcation for prey with refuge
\begin{figure}[!htb]
\begin{center}
    \subfigure[]{
    \includegraphics[scale=.09]{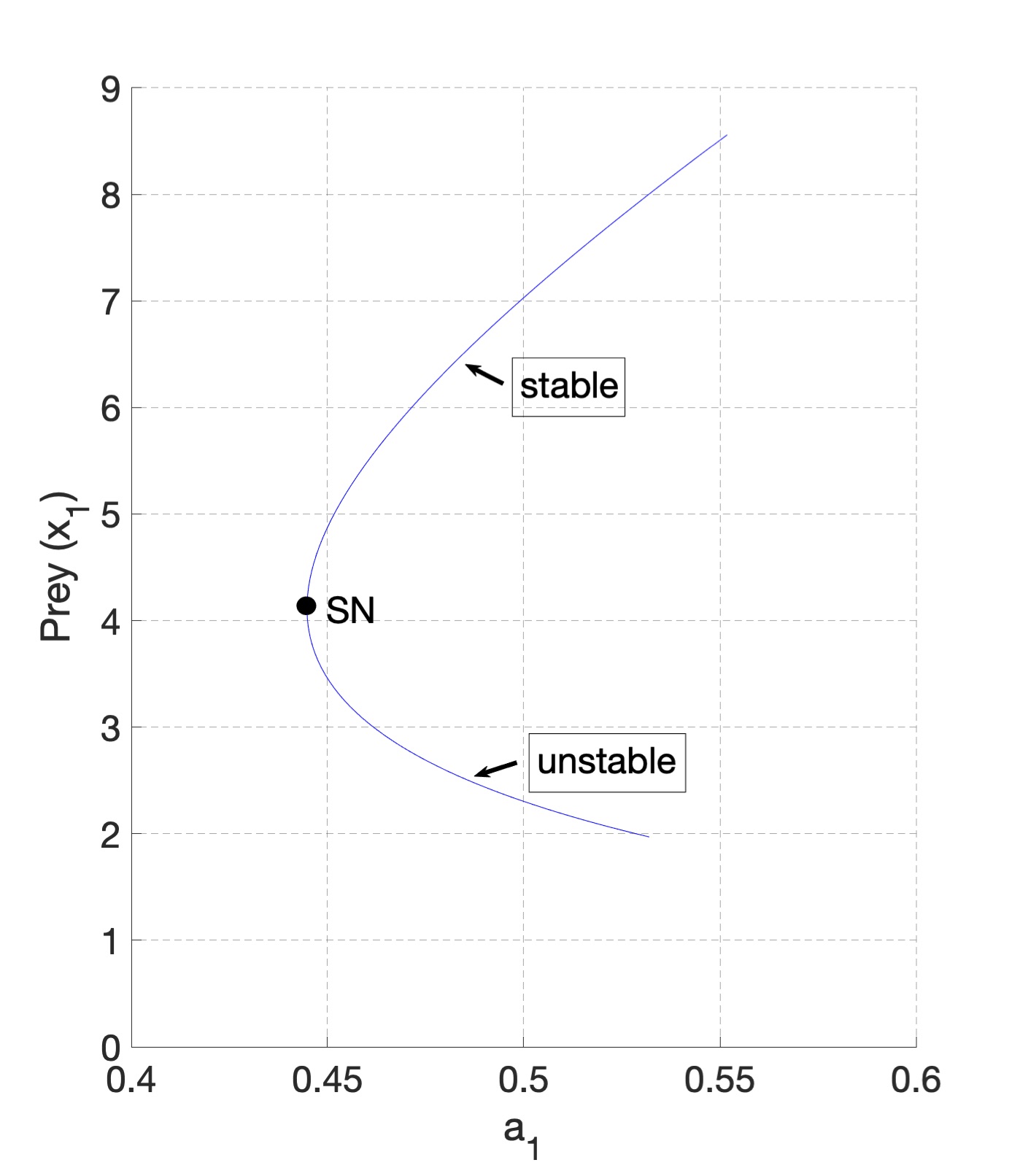}}
    \subfigure[]{
    \includegraphics[scale=.09]{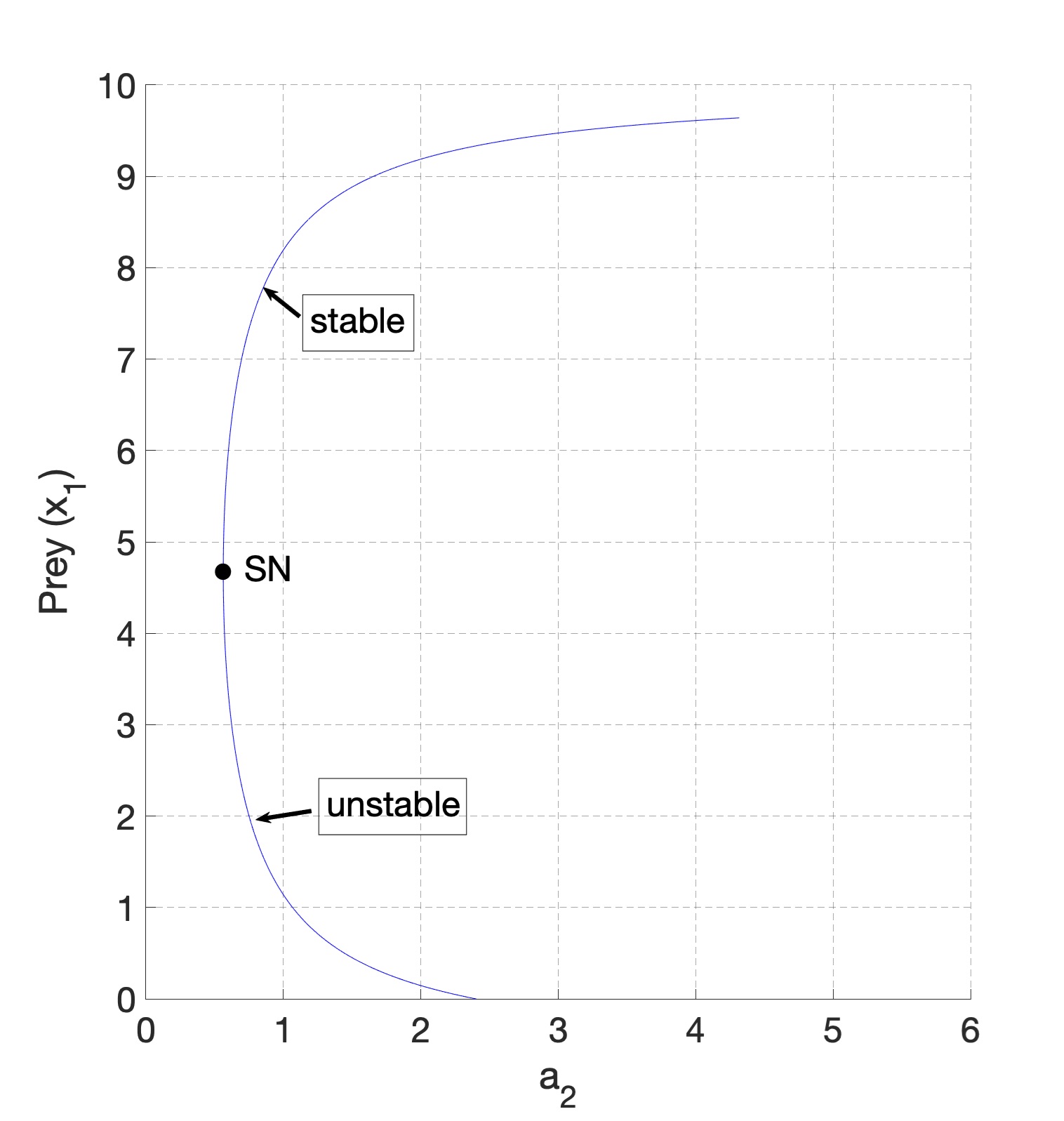}}
    \subfigure[]{
    \includegraphics[scale=.09]{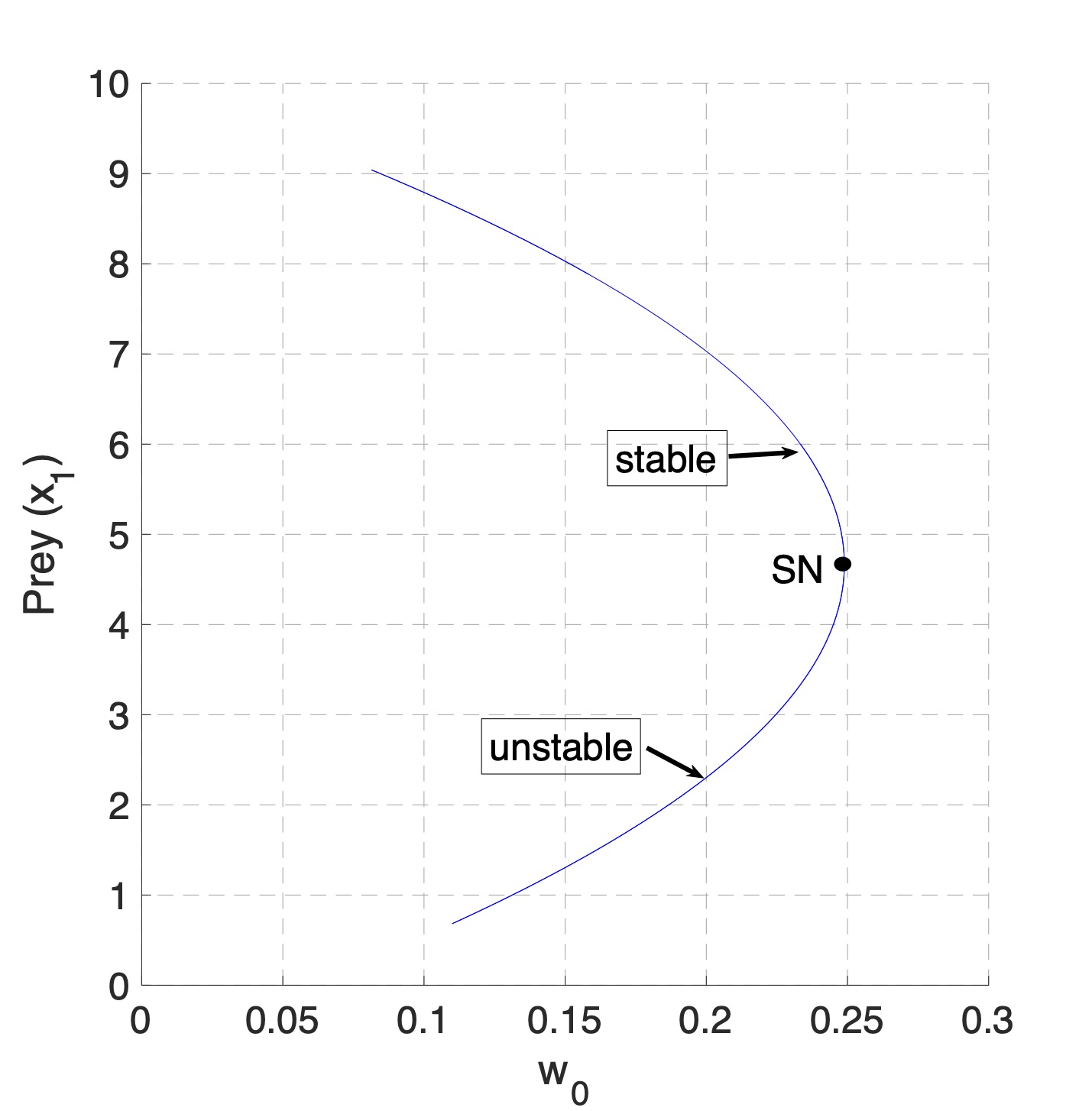}}
   \subfigure[]{
    \includegraphics[scale=.09]{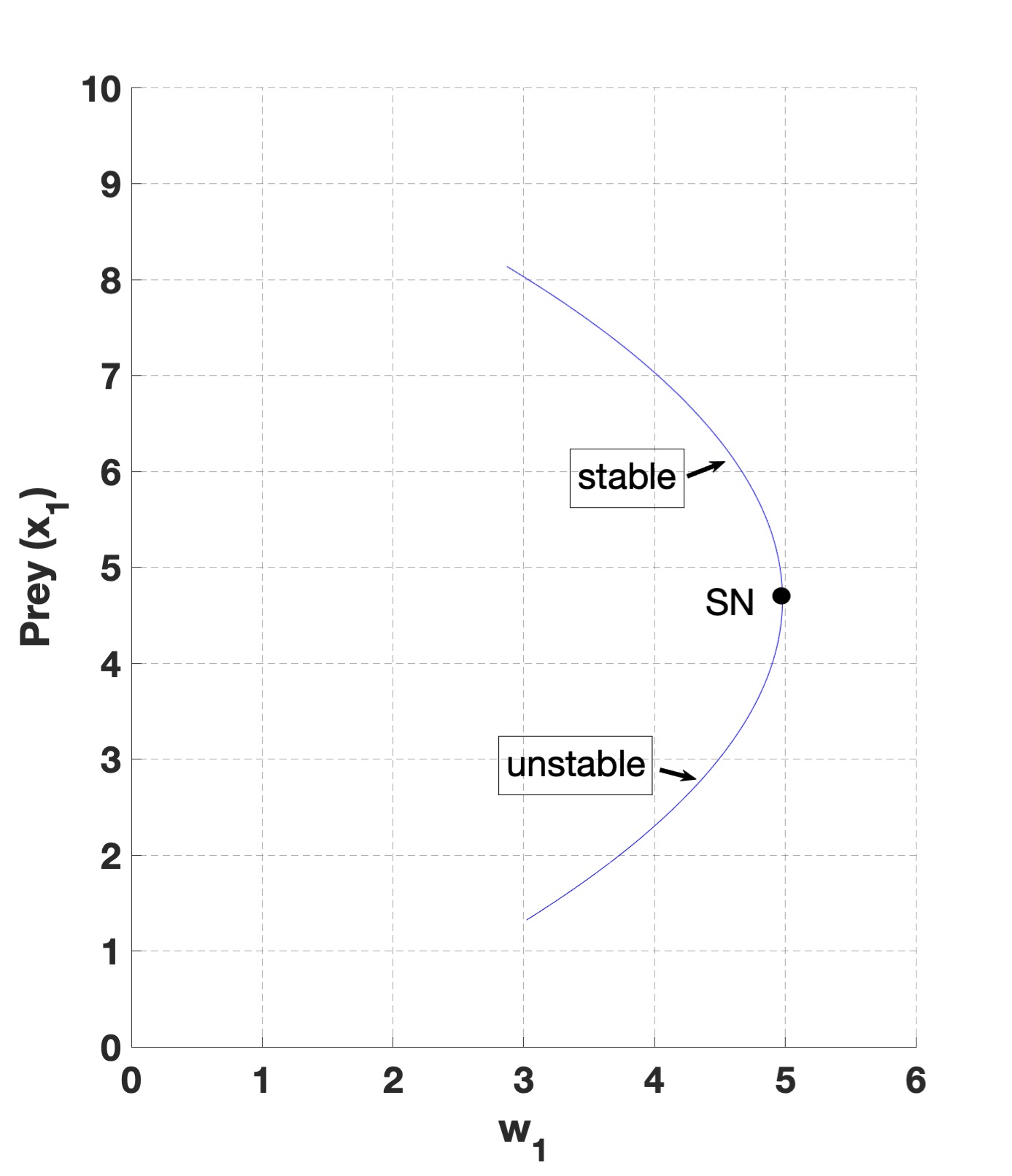}}
  \subfigure[]{
    \includegraphics[scale=.09]{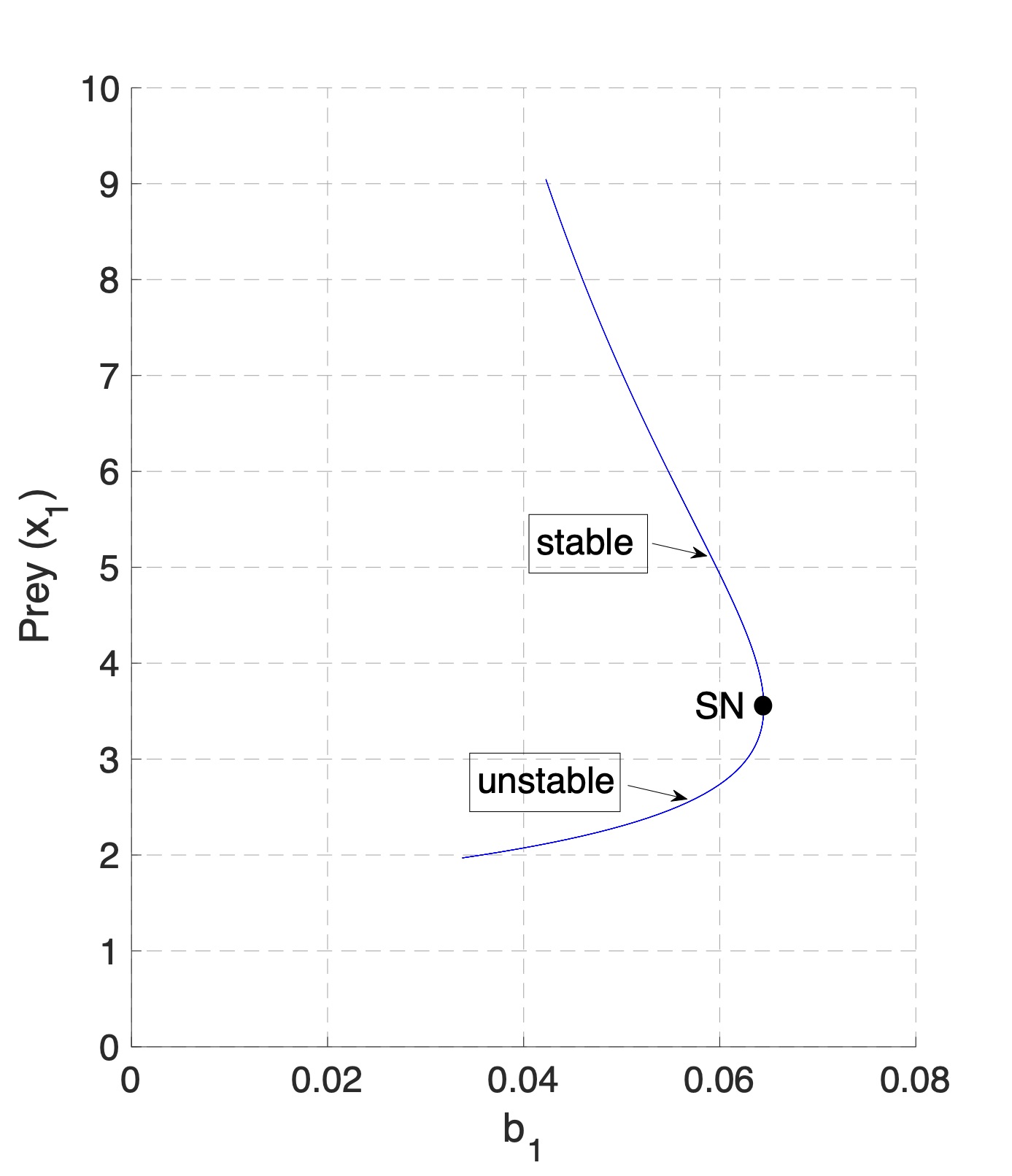}}
\end{center}
 \caption{ Bifurcation diagrams of the model \eqref{refuge} illustrating (a) SN at $a_1=a_1^*=0.44476$, (b) SN at $a_2=a_2^*=0.5625$, (c) SN at $w_0=w_0^*=0.24889$, (d) SN at $w_1=w_1^*=4.97778$,  (e) SN at $b_1=b_1^*=0.064498$. Here $r=0.3$ and other parameter sets are given in Table \ref{table:Paramset3}. (SN: Saddle-node bifurcation.)  }
      \label{fig:SN_refuge}
\end{figure}
Next, for $r=0.3$ and the other parameter sets given in Table \ref{table:Paramset3} of the model \eqref{refuge}, we obtain the following two interior equilibrium points $E_2^1(2.30292,25.3225)$ and $E_2^2(7.03041,29.8249)$ and the predator-free equilibrium point is $E_1(10,0)$. The eigenvalues associated with $E_2^1(2.30292,25.3225)$ are $-0.32246$ and $0.19897$, hence $E_2^1$ is a saddle. The eigenvalues associated with $E_2^2(7.03041,29.8249)$ are $-0.33157$ and $-0.22790$, hence $E_2^2$ is locally asymptotically stable, see Fig. \ref{fig:time_series_refuge_SN}. We note here that, $E_1(10,0)$ cannot be analyzed using the linear stability method since $m_1=m_2=0.5<1$. We observed that the model \eqref{refuge} undergoes a saddle-node bifurcation around $E_2(x_1^*,x_2^*)$ when the following bifurcation parameters $a_1,a_2,w_0,w_1$ and $b_1$ crosses their corresponding critical values $a_1^*=0.44476,a_2^*=0.5625,w_0^*=0.24889,w_1^*=4.97778,$ and $b_1^*=0.064498$ respectively. The saddle-node bifurcation diagrams are presented in  Fig. \ref{fig:SN_refuge}.
%
%%%%%%%%%%%%%%%%%%%%%%%%%%%%%%%%%%%%%%%%%%%%%%%%%%%%%%%%%%%
\section{Discussions and Conclusions}\label{section:Dissussion_conclusion}
%\newpage %% ======= Reference
In this work, we consider a predator-prey model, that allows us to model both the feeding intensity of the predator, as well as the effect of mutual/predator interference. Through numerical simulations, it has been noticed that based on the non-uniqueness of the solutions of model \eqref{EquationMain}, when $m_1<1$ and $m_2=1$, the interior equilibrium $E_2$ is not globally asymptotically stable when it is an attractor (see Fig. \ref{fig:nullcline_manifold2}). We observe that the per capita rate of self-reproduction $a_1$ plays an important role because the interior equilibrium point $E_2$ changes stability at the bifurcation point $a_1^*$ (see Fig. \ref{fig:bifurcation1}). The limit cycle through the bifurcation point is stable  hence a supercritical Hopf-bifurcation.

 Furthermore, the effect of prey refuge is also considered in model \eqref{refuge} - thus one can see the interplay of all of these factors in this model. The model possesses a rich array of dynamical behavior. We have established analytically the occurrence of various local bifurcations including saddle-node, transcritical and Hopf bifurcations. The occurrence of these local bifurcations are well supplemented with one parameter bifurcation diagrams (see Figs. \ref{fig:bifurcation2}, \ref{fig:TB_Hopf_bifurcation_refuge}, and \ref{fig:SN_refuge}).  Prey extinction in finite time is also possible - for large enough initial predator density, and small enough initial prey density. Moreover,  we observed that when $W^s(E_0)$ is above $W^u(E_1)$, all solutions with initial conditions above $W^s(E_0)$ goes to prey extinction in finite time (see Fig. \ref{fig:nullcline_manifold1}(a)). This is in line with the result in \cite{BS19}. Thus, from a practical point of view increasing $m_{1}$ or decreasing the feeding intensity of the predator, will maintain ecosystem balance, as this decreases the predator nullcline, decreasing predator numbers and increasing prey numbers. 
 
 Stability in the system can also be maintained via provision of the prey with refuge. This is rigorosly established via theorem \ref{thm:ref1}. The requisite condition for a critical refuge, for persistence, derived via the theorem sheds light on various ecological scenarios. The ecological validity of the prey extinction state $(0, x^{*}_{2})$ is questionable. In the experiments of Gause \cite{G34}, once the prey has gone extinct the predator population also crashes, as there is no alternative/additional food in the experimental system. In a real scenario however, such a state might be indicative of a predator having switched to another food source after its primary source has depleted or surviving on additional food, such as in a bio-control situation \cite{P19}.

%%%%%%%%%%%%%%%%%%%%%%%%%%%%%%%%%%%%%%%%%%
% Citations and References in Supplementary files are permitted provided that they also appear in the reference list here.

\section*{Conflict of Interest}
The authors declare there is no conflict of interest in this paper.

\bibliographystyle{mdpi}

%=====================================
% References, variant A: internal bibliography
%=====================================
%\renewcommand\bibname{References}

%=====================================
% References, variant B: external bibliography
%=====================================
%\bibliography{your_external_BibTeX_file}

%%%%%%%%%%%%%%%%%%%%%%%%%%%%%%%%%%%%%%%%%%
\end{document}